\newtheorem{prop}{Proposition}[section]
\newtheorem{thm}[prop]{Theorem}
\newtheorem{cor}[prop]{Corollary}
\newtheorem{ques}[prop]{Question}
\newtheorem{lem}[prop]{Lemma}
\newtheorem{adden}[prop]{Addendum}
\theoremstyle{definition}
\newtheorem{de}[prop]{Definition}
\newtheorem{example}[prop]{Example}
\newtheorem{examples}[prop]{Examples}
\theoremstyle{remark}
\newtheorem{Remark}[prop]{Remark}             %% standard numbered style
\newtheorem{Remarks}[prop]{Remarks}             %% standard numbered style
\newtheorem*{remark}{Remark}            %% standard un-numbered style
\def\C{{\mathbb C}}
\def\CP{{\mathbb C \mathbb P}}
\def\Z{{\mathbb Z}}
\def\R{{\mathbb R}}
\def\Q{{\mathbb Q}}
\def\E{{\mathcal E}}
\def\emb{{\hookrightarrow}}
\def\int{\mathop{\rm int}\nolimits}
\def\cl{\mathop{\rm cl}\nolimits}
\def\dim{\mathop{\rm dim}\nolimits}
\def\im{\mathop{\rm Im}\nolimits}
\def\max{\mathop{\rm max}\nolimits}
\def\id{\mathop{\rm id}\nolimits}
\def\Rk{\mathop{\rm Rank}\nolimits}
\def\top{\mathop{\rm TOP}\nolimits}
\def\co{\colon\thinspace}
\def\Hinf{\mathop{\rm H_2^\leftarrow}\nolimits}
\def\limin{\mathop{\lim_\leftarrow}\nolimits}
\def\Self{\mathop{\rm Self}\nolimits}
\begin{document}
\title{Minimal genera of open 4--manifolds}
\author{Robert E Gompf}
\thanks{Partially supported by NSF grants DMS--0603958 and 1005304.}
\address{The University of Texas at Austin, Mathematics Department RLM 8.100, Attn: Robert Gompf,
2515 Speedway Stop C1200, Austin, Texas 78712--1202}
\email{gompf@math.utexas.edu}
\date{January 15, 2016}
\begin{abstract} We study exotic smoothings of open 4--manifolds using the minimal genus function and its analog for end homology. While traditional techniques in open 4--manifold smoothing theory give no control of minimal genera, we make progress by using the adjunction inequality for Stein surfaces. Smoothings can be constructed with much more control of these genus functions than the compact setting seems to allow. As an application, we expand the range of 4--manifolds known to have exotic smoothings (up to diffeomorphism). For example, every 2--handlebody interior (possibly infinite or nonorientable) has an exotic smoothing, and ``most" have infinitely, or sometimes uncountably many, distinguished by the genus function and admitting Stein structures when orientable. Manifolds with 3--homology are also accessible. We investigate topological submanifolds of smooth 4--manifolds. Every domain of holomorphy (Stein open subset) in $\C^2$ is topologically isotopic to uncountably many other diffeomorphism types of domains of holomorphy with the same genus functions, or with varying but controlled genus functions.
\end{abstract}
\maketitle

%%%%%%%%%%%%%%%%%%%111111111111111111111%%%%%%%%%

\section{Introduction}

Classification theory for smooth structures on a fixed topological manifold is entirely anomalous in dimension four. While the theory becomes trivial in dimensions three and below, and reduces to obstruction theory in dimensions five and higher, the 4--dimensional theory is much more subtle and complicated. The initial shock of discovery came in the early 1980s, when the foundational theories of Freedman \cite{F} and Donaldson \cite{D1} completed prior work of Casson \cite{C} to show that Euclidean 4--space $\R^4$ has exotic smoothings, even though obstruction theory is trivial on contractible manifolds. It soon developed that $\R^4$ and many other open 4--manifolds have uncountably many diffeomorphism types of smoothings, arising in continuous families unique to dimension four. This prompted a fundamental question that remains open three decades later:

\begin{ques}\label{Q} Does every open 4--manifold admit more than one diffeomorphism type of smoothing? Infinitely many? Uncountably many?
\end{ques}

\noindent While this question seems to have languished in recent years, its compact analog continues to be extensively studied. Compact 4--manifolds frequently have infinitely many smoothings (necessarily countably many), in defiance of the high dimensional obstruction theory. The compact theory, descended from Donaldson's pioneering work \cite{D2}, is very concrete, with explicitly specified smooth 4--manifolds distinguished by invariants that can often be computed and applied to basic topological problems. For example, the {\em adjunction inequality} for these invariants provides information about the minimal genus of smoothly embedded surfaces representing a given homology class. In contrast, the traditional theory for open 4--manifolds is very indirect. The smoothings are rarely specified explicitly, and are typically distinguished using proofs by contradiction. There is a shortage of computable invariants, or even explicit manifolds for which to compute them. The adjunction inequality is known for the class of open 4--manifolds called Stein surfaces, but outside of that context, minimal genera of embedded surfaces appear not to have been previously investigated on open 4--manifolds. The present article addresses these issues by systematically studying smoothing theory on open 4--manifolds via minimal genera. In contrast with the compact case, it seems much easier to construct smoothings on open 4--manifolds with prespecified control of minimal genera, and these genera can be packaged into powerful invariants. We make substantial progress on the above question, sometimes with explicitly computed invariants on explicit manifolds. The overall intent is to elucidate open 4--manifolds by importing compact techniques into the noncompact world.

 The background of the subject begins with the shadow of high dimensional smoothing theory falling on dimension four. In this theory, it is natural to classify smooth structures up to isotopy rather than just diffeomorphism: Two smooth structures on a fixed topological manifold $X$ with boundary are {\em isotopic} if there is a diffeomorphism between them that is topologically ambiently isotopic to the identity (ie the diffeomorphism is related to the identity by a homotopy through homeomorphisms). There is a canonical map from isotopy classes on $X$ to those on $X\times \R$, which is a bijection when $\dim X>4$. Thus, we can apply the high dimensional theory to 4--manifolds by passing to dimension five to consider {\em stable isotopy classes}, ie isotopy classes on $X\times\R$. Unlike in higher dimensions, a given stable isotopy class on a 4--manifold can be represented by many isotopy classes of smoothings, or by none at all. For open 4--manifolds, however, the map is surjective \cite{Q}, so every stable isotopy class is realized by a smoothing of the 4--manifold. Clearly, isotopic smoothings are always diffeomorphic. The converse fails, however, even in high dimensions. For smoothings on 4--manifolds, stable isotopy and diffeomorphism are independent relations. The techniques of this paper frequently distinguish infinitely many diffeomorphism types within a given stable isotopy class. The primary obstruction to the existence of a smooth structure, extending a given smoothing on $\partial X$, is the {\em Kirby--Siebenmann invariant} of \cite{KS} in ${\rm H}^4(X^n,\partial X^n;\Z_2)$. (We work rel boundary since the boundary of a 4--manifold necessarily has a unique isotopy class of smoothings.) When this obstruction vanishes, stable isotopy classes of smoothings on a 4--manifold are classified by ${\rm H}^3(X,\partial X;\Z_2)$. That is, the set of such classes is in bijective correspondence with the group, canonically once we specify which class corresponds to 0. See \cite{FQ} for further discussion on smoothing theory and its 4--dimensional idiosyncrasies, and \cite{GS} for background on many of the ideas used in this paper.

Section~9.4 of \cite{GS} summarized what was known about smooth structures on open 4--manifolds, and was current up to the work described in the present paper. Various results on continuous families of smooth structures (eg \cite{menag}, \cite{Fa}) were combined and generalized into Theorem~9.4.24 of that book. For an open, connected topological 4--manifold $X$, let $V\subset X$ be the complement of a compact subset, or a component with noncompact closure of such a complement. In the former case, assume $V$ is orientable. If $V$ has a smoothing for which some finite cover embeds smoothly in $\# n\C P^2$ for some finite $n$, then $X$ admits uncountably many isotopy classes of smoothings, in each stable isotopy class agreeing with a particular one on $V$. Under some circumstances, one can use this theorem to distinguish uncountably many diffeomorphism types within every stable isotopy class. For example, this occurs if some end of $X$ is collared by $M^3\times\R$ (ie $M$ is closed and $M\times[0,\infty)$ properly embeds in $X$), where $M$ has a bicollared topological embedding in $\# n\C P^2$, or if $M$ is orientable and has a finite cover smoothly embedding in $\# n\C P^2$ (Corollary~9.4.25). Many, but probably not all, closed 3--manifolds satisfy at least one of these conditions. In a different direction, Taylor's invariant \cite{T} (extending earlier work in \cite{BE} and previously \cite{menag}) stems from trying to embed $X$ into a closed, spin 4--manifold with signature 0, and minimizing the resulting $b_2$. (We suppress various crucial details here.) The resulting invariant, which for spin manifolds takes values in $\Z^{\ge 0}\cup\{\infty\}$, shows that if $X$ is orientable, admitting a smoothing with a proper Morse function $X\to[0,\infty)$ with only finitely many critical points of index 3, and if ${\rm H}_2(X)$ has finite dimension with both $\Z_2$ and $\Q$ coefficients, then each stable isotopy class contains (at least countably) infinitely many diffeomorphism types of smoothings, cf.\  \cite[Theorem~9.4.29(a)]{GS}. Note that none of the techniques discussed so far allow any control of surfaces representing specific homology classes.

The origin of the present paper is the remaining result in \cite{GS}, ie Theorem~9.4.29(b). For this, we assume that $X$ is orientable, ${\rm H}_2(X)$ is nonzero, and there is a proper Morse function $X\to[0,\infty)$ with indices at most 2. (Throughout the text, we use homology with integer coefficients except where otherwise specified.) These hypotheses allow us to access the adjunction inequality via Stein surfaces and Freedman theory. The theorem concludes that $X$ admits (at least countably) infinitely many isotopy classes of smooth structures (realizing the unique stable isotopy class). Over time, the author expanded this method in various directions (previously unpublished). Recently, Akbulut and Yasui \cite{AY}, \cite{Y} have applied related techniques to distinguish infinite families of smoothings on compact 4--manifolds with boundary, with arbitrarily large finite subfamilies admitting Stein domain structures. (Their examples are constructed with finitely many handles, whereas the examples in the present paper come from infinite constructions involving Casson handles, as is typical of noncompact smooth 4--manifold constructions.)

The present paper uses Freedman theory to construct exotic smooth structures (smoothings not diffeomorphic to a preassigned one), and the adjunction inequality for Stein surfaces to systematically study these via the smoothly embedded surfaces they contain. We obtain various new existence theorems, such as this corollary of Theorems~\ref{inf} and \ref{cantor}:

\begin{thm}\label{exotic} If a smooth, open 4--manifold admits a proper Morse function $X\to[0,\infty)$ with indices at most 2, then it admits an exotic smooth structure. It admits infinitely many diffeomorphism types of smoothings if ${\rm H}_2(X)\ne 0$ or $X$ is not a $K(\pi,1)$. It admits uncountably many if ${\rm H}_2(X)$ has infinite rank.
\end{thm}

\noindent For orientable $X$, the countably infinite case ${\rm H}_2(X)\ne 0$ is \cite[Theorem~9.4.29(b)]{GS} upgraded from isotopy classes to diffeomorphism types. The infinite collections of smooth manifolds produced here all admit Stein structures when $X$ is orientable. Dropping the condition on Morse indices, we have (cf.\ \cite{BE}):

\begin{thm}\label{introM}
Let $X$ be a connected topological 4--manifold (possibly with boundary) with some end collared by $M\times\R$ for a closed, connected 3--manifold $M$. If $X-M\times(0,\infty)$ is compact, assume its Kirby--Siebenmann invariant vanishes. Let $\widetilde M$ denote $M$ (if orientable) or its orientable double cover.  If  ${\rm H}_2(\widetilde M)\ne0$, then $X$ has infinitely many diffeomorphism types of smoothings.
\end{thm}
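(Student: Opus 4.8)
To prove the theorem, the plan is to leave the given product smoothing untouched outside a neighborhood of the collared end, and inside that neighborhood to splice in, at arbitrarily great depth, exotic but topologically standard Stein pieces (standardness via Freedman's theorem) whose first Chern class forces, through the adjunction inequality for Stein surfaces, the minimal genus of a fixed end-homology class to grow without bound; since the minimal genus function on end-homology is a diffeomorphism invariant, this yields infinitely many diffeomorphism types. First some preliminaries: under the hypotheses $X$ is smoothable (the Kirby--Siebenmann obstruction vanishes by assumption when $X-M\times(0,\infty)$ is compact, and automatically otherwise, the relevant obstruction group being trivial for a noncompact $4$-manifold), so Freedman--Quinn smoothing theory \cite{FQ} provides a reference smoothing $X_0$ restricting to the product structure on $M\times[0,\infty)$. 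If $M$ is nonorientable, then so is $X$ (an open subset of an orientable manifold is orientable), the orientation double cover $\widetilde X\to X$ is connected, and over the end it restricts to $\widetilde M\times\R$; in that case the whole construction will be carried out $\Z_2$-equivariantly so that it descends to $X$, and the distinguishing invariant read off in $\widetilde X$. So I may assume the end collar is $\widetilde M\times\R$ with $\widetilde M$ orientable, and use $H_2(\widetilde M;\Z)\ne0$ directly. This extends the method of \cite{BE}.

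Next I would fix local Stein models. Represent a nonzero class of $H_2(\widetilde M;\Z)$ by a connected embedded orientable surface $F\subset\widetilde M$, which (tubing on a small standard torus) I take to have genus $\ge1$. A trivialized tubular neighborhood exhibits a smooth $F\times D^2\subset\widetilde M\times\{t\}$ for every $t$; thicken it to $F\times D^2\subset\widetilde M\times[t,t+1]\subset\widetilde X$, and note that $H_2(F\times D^2)\cong\Z\langle[F]\rangle$ with $[F]^2=0$. Because $\mathrm{genus}(F)\ge1$ the bundle $F\times D^2$ admits Stein structures, but the obvious one has $\langle c_1,[F]\rangle=\chi(F)\le0$, which is useless. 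The key step---the same controlled handle calculus for Stein Casson handles that underlies the earlier theorems of the paper---is: for each $n$, attach a Stein Casson handle in place of the $2$-handle in a standard handle decomposition of $F\times D^2$, choosing the rotation numbers in the Casson tower so that the resulting compact Stein surface $P_n$ satisfies $\langle c_1(P_n),[F]\rangle\ge2n$; by Freedman's theorem the Casson handle is topologically a standard $2$-handle, so $P_n$ is homeomorphic to $F\times D^2$ rel boundary, with $[F]^2=0$ preserved. Splicing copies of $P_n$ in place of standard $F\times D^2$'s at a sequence of depths tending to infinity (equivariantly, rel boundary) produces a smoothing $X_n$ of $X$ homeomorphic to $X_0$.

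The conclusion then follows from adjunction. For the compact Stein surface $P_n$, any smoothly embedded closed orientable surface of genus $h$ representing $[F]$ satisfies $2h-2\ge[F]^2+|\langle c_1(P_n),[F]\rangle|\ge2n$, so $h\ge n+1$ (in particular no sphere represents $[F]$). Let $\delta$ be the end-homology class of $\widetilde X_n$ carried by $F$. Because a copy of $P_n$ recurs at arbitrarily great depth, no matter how deep an end-neighborhood one passes to, once a closed surface representing $\delta$ there is localized into one of these Stein pieces it has genus $\ge n+1$; hence the minimal genus function on the end-homology of $\widetilde X_n$ takes value $\ge n+1$ at $\delta$, whereas for $X_0$ it equals $\mathrm{genus}(F)$. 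This function, together with its canonical lift to the orientation cover, depends only on the smooth structure near the ends and transforms naturally under homeomorphisms, hence is a diffeomorphism invariant (of a function, up to the automorphisms of end-homology induced by homeomorphisms); so the same bookkeeping used to upgrade the paper's earlier isotopy statements to diffeomorphism statements shows that, since the invariant is unbounded in $n$, the $X_n$ realize infinitely many diffeomorphism types.

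The main obstacle is the construction of the Stein Casson handles in the middle step: they must simultaneously be topologically a standard $2$-handle (to keep $X_n\approx X_0$), honestly Stein after attaching, and twisted enough that $|\langle c_1,[F]\rangle|$ is as large as desired, and---when $M$ is nonorientable---be assembled $\Z_2$-equivariantly. The second delicate point is making the minimal genus function on end-homology into a genuine diffeomorphism invariant (diffeomorphisms only permute the ends, and the relevant homology is an inverse limit over ever-deeper collars) and establishing the localization that confines representatives of $\delta$ into the Stein regions, which is exactly why a copy of $P_n$ must be inserted at every depth rather than just once.
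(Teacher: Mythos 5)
The proposal rests on an unjustified localization claim that in fact fails, and this is the crux of the gap. You splice the Stein pieces $P_n$ into thin neighborhoods $F\times D^2\subset\widetilde M\times[t,t+1]$, keeping the smooth structure on the complement $(\widetilde M\setminus F\times D^1)\times[t,t+1]$ standard. But then a push-off of $F$ never touches the spliced region: since $F$ and $\widetilde M$ are orientable, $F$ has trivial normal bundle in $\widetilde M$, and the parallel copy $F'=F\times\{1\}\subset\partial(\text{tubular nbhd of }F)$ gives a surface $F'\times\{s\}\subset\widetilde M\times\{s\}$ of genus $g(F)$, homologous to $F$, lying entirely in the fixed boundary of the spliced region (or outside it) for every $s$. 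This surface is smooth in $X_n$ at arbitrary depth, so the minimal genus at infinity of the class $\delta$ is still $g(F)$ for every $n$, and your invariant does not separate the $X_n$. You flag the "localization that confines representatives of $\delta$ into the Stein regions" as the delicate point, and propose inserting $P_n$ at every depth; but since the inserted pieces are thin tubes that never block the whole $\widetilde M\times\{s\}$ slice, no choice of insertion depths solves this.

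The paper's proof (via Corollary~\ref{collar}, using Lemma~\ref{nest} and Theorem~\ref{endsystem}) is structurally different precisely because of this issue. Rather than inserting small Stein pieces into the collar, it \emph{caps off} the collared end by a compact handlebody $H_0$ bounded by $M$ with all indices $\le2$, constructed so that the induced map $\iota_\infty\colon{\rm H}_2(M)\cong\Hinf(H_0-C)\to{\rm H}_2(H_0)$ is injective. Then it replaces $2$-handles of the nested compactifying handlebodies $H_i$ by Casson handles, and for each $i$ produces Stein surfaces containing \emph{all} of $\widetilde H_i$. Because the Stein structure is global on a manifold containing the entire neighborhood of infinity (minus the wild core $C$), every surface representing a class in ${\rm H}_2(\widetilde M)$ that survives to ${\rm H}_2(\widetilde H_i)$ is subject to the adjunction inequality, with no localization step needed. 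Injectivity of $\iota_\infty$ is what guarantees the relevant classes do survive, and is achieved by the particular choice of capping handlebody; the push-off $F'$ that defeats your construction is caught by this because it still represents a nonzero class in the capped-off Stein surface. To salvage your approach you would need your Stein piece to contain an entire $\widetilde M\times I$ slab, but $\widetilde M\times I$ is not a $2$-handlebody and admits no Stein structure — which is exactly the obstruction the paper's capping step is designed to remove.
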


\noindent See Theorem~\ref{collar}. This is not new, since Bi\v zaca and Etnyre \cite{BE} proved that every 4--manifold with a collared end admits infinitely many diffeomorphism types of smoothings. (Their missing case of infinitely many homeomorphic collared ends was completed by an observation of Ganzell \cite{Ga}.) However, their appproach gives no control of genera of surfaces. Furthermore, Theorem~\ref{introM} is a special case of the much more general Theorem~\ref{endsystemdiff} that also covers examples such as the following (Theorem~\ref{1h}(b)):

\begin{thm}\label{intro1h}
If $X$ admits a Morse function as above with indices at most 1, then $X$ has infinitely many diffeomorphism types of smoothings. If ${\rm H}_1(X)$ is not finitely generated, there are uncountably many.
\end{thm}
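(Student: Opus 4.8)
The plan is to build the smoothings by a Casson-handle construction controlled by the adjunction inequality, exactly as in the index-$\le 2$ case, but noting that a manifold with a proper Morse function $X\to[0,\infty)$ with all indices $\le 1$ is, after reordering handles, the interior of a (possibly infinite) $1$-handlebody. Its homology is concentrated in ${\rm H}_0$ and ${\rm H}_1$, which is free. First I would reduce to the situation where $X$ has a single $0$-handle and a collection (possibly infinite) of $1$-handles: if ${\rm H}_1(X)\ne 0$ there is a $1$-handle carrying a nontrivial generator, and the corresponding $S^1\times D^3$-summand (more precisely, a $\natural\,(S^1\times D^3)$ neighborhood of a core circle together with a boundary collar) gives an end modelled on $M\times\R$ with $M=\#(S^1\times S^2)$ or a twisted version; since $\widetilde M$ for such $M$ has ${\rm H}_2\ne 0$ (it contains essential $2$-spheres), Theorem~\ref{introM} --- or rather its stated generalization, Corollary~\ref{endsystemdiff} and Lemma~\ref{nest} --- already applies and yields infinitely many diffeomorphism types. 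The point of the "$\le 1$" hypothesis, as opposed to merely "has a collared end," is that here we have such ends available canonically from the handle decomposition, and the general nesting lemma lets us iterate: each new smoothing is built to differ from the previous ones by a Casson handle attached deep in the end, detected by the minimal genus (equivalently end-genus) function, so no diffeomorphism can match them up.

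Next, for the uncountable conclusion when ${\rm H}_1(X)$ is not finitely generated, I would argue that there are then infinitely many independent $1$-handles, hence infinitely many "independent" collared ends (or a single end into which infinitely many independent Casson-handle modifications can be inserted at ever-deeper levels), and run the Cantor-set construction underlying Theorem~\ref{cantor}: choose for each infinite binary string a nested sequence of modifications, one at each level, with the genus function (or the end-homology minimal genus function $\Hinf$) recording the string. The key input is that distinct strings give distinct genus functions --- an invariant of the diffeomorphism type --- so the $2^{\aleph_0}$ resulting smoothings are pairwise non-diffeomorphic. Here the hypothesis that ${\rm H}_1$ is infinitely generated is what supplies infinitely many "slots" in which to encode the string in a way that survives any self-homeomorphism of $X$ (finitely many slots could be permuted, but infinitely many independent homology classes cannot all be rearranged to defeat the invariant, by the usual rank/finiteness argument as in part (d) of the infinitely-many theorem).

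The main obstacle I anticipate is establishing that the genus-type invariant actually \emph{separates} the constructed smoothings in the index-$\le 1$ setting, because here ${\rm H}_2(X)$ may vanish entirely, so the ordinary minimal genus function on $2$-homology classes is useless. The resolution --- and the technical heart of the argument --- is to pass to end-homology: the collared ends $M\times\R$ contribute classes to the end-homology $\Hinf$ of $X$ (coming from the $2$-spheres in $\widetilde M$, pushed out to infinity), and the Stein/adjunction machinery bounds the genus of surfaces representing these end-classes. I would therefore need to verify that the Casson-handle insertions raise this end-genus function in a controlled, strictly monotone way --- this is where the adjunction inequality for Stein surfaces enters, applied to exhaustions of the modified manifolds --- and that the resulting function is genuinely a diffeomorphism invariant, insensitive to the choice of exhaustion. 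Granting Lemma~\ref{nest}, Corollary~\ref{endsystemdiff}, and Theorem~\ref{cantor} from the body of the paper, the deduction of Theorem~\ref{intro1h} is then a matter of checking that the $1$-handlebody interiors supply ends of the required form and that $\widetilde M$ has nonzero second homology, which it does for any nontrivial connected sum of $S^1\times S^2$'s (orientable case) or its orientable double cover (nonorientable case).
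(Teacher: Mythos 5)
Your overall framework is right: the relevant invariant is $G_\infty$, the relevant machinery is Lemma~\ref{nest} and Corollary~\ref{endsystemdiff}, and for ${\rm H}_1(X)$ finitely generated (so $X\cong\int(\natural_n S^1\times D^3)$ with $n\ge 1$) reducing to a collared end $\#_n S^1\times S^2\times\R$ and invoking Corollary~\ref{collar} matches the paper exactly. But there is a genuine gap in the infinitely generated case, and it is precisely where you lean on collared ends: an infinite $1$-handlebody interior generally has \emph{no} topologically collared ends (the paper points this out explicitly in Section~\ref{Ends}), and ``infinitely many independent collared ends'' is not something the $1$-handles provide. The $2$-homology of $X$ and of all its covers vanishes, and the inverse system ${\rm H}_2(X-K_i)$ degenerates as more $1$-handles are attached outward, so there is no way to locate the $\Hinf$-classes you need inside $X$ itself. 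Your parenthetical fallback (``a single end into which infinitely many independent Casson-handle modifications can be inserted'') gestures at the right idea but leaves open the key question: Casson handles replace $2$-handles, and $X$ has none, so where do these modifications live?

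The paper's resolution is a maneuver you do not supply: embed $X$ in $S^4$ (or $\R P^4$ if $X$ is nonorientable) by writing $X$ as a nested union of compact $1$-handlebodies $K_i$ and attaching their $1$-handles ambiently, starting from $K_0=D^4\subset S^4$. The complementary regions $H_i=S^4-\int K_i$ are then compact handlebodies with all indices $\le 2$ --- diffeomorphic to $\natural_{n_i}S^2\times D^2$ (plus a disk bundle over $\R P^2$ in the nonorientable case) --- and they form a nested $2$-system with intersection $C=S^4-X$, so that $H_0-C=X-\int K_0$ embeds properly in $X$. The $2$-spheres of the complementary $S^2\times D^2$-summands are the source of the classes in $\Hinf$ whose genera at infinity can be forced up by Casson-handle refinement in the complement; by arranging the successive ambient $1$-handles to miss chosen $2$-handles of the $H_i$, one makes ${\rm H}_2(H_i^s)$ grow without bound, and Corollary~\ref{endsystemdiff} then delivers uncountably many diffeomorphism types by the Cantor-set argument of Theorem~\ref{cantor}. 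In short: you correctly anticipate that end-homology must carry the invariant, but the crucial step --- extracting the needed $2$-homology from the \emph{ambient complement} in $S^4$ rather than from nonexistent collars inside $X$ --- is missing from your sketch.
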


\noindent There are other approaches to this latter theorem: Taylor's invariant distinguishes infinitely many diffeomorphism types, and for orientable $X$,  \cite[Theorem~9.4.24]{GS} gives uncountably many isotopy classes (or diffeomorphism types if ${\rm H}_1(X)$ {\em is} finitely generated). What is most interesting here is that when  ${\rm H}_1(X)\ne 0$, this theorem follows just by studying genera of embedded surfaces, even though ${\rm H}_2(X)=0$.

While these theorems include substantial progress on Question~\ref{Q}, they are mainly presented as applications of controlling minimal genera, which is the main theme of the paper. To this end, Section~\ref{Background} presents our invariants derived from the genus function of a 4--manifold, along with required background material on Casson handles, Stein surfaces and the adjunction inequality. Our Main Lemma~\ref{main} provides control of these invariants for smoothings of a handlebody interior with indices at most 2, and the rest of Section~\ref{Main} provides simple applications, such as Theorem~\ref{exotic}. For deeper applications, we analyze ends of manifolds with the {\em genus function at infinity}. This is sensitive enough to detect exotic smoothings on manifolds with trivial 2--homology such as in Theorem~\ref{intro1h}. Our main lemma for controlling the genus function at infinity is presented in Section~\ref{Ends} and applied in Section~\ref{Apps}. For simplicity, we focus on collared ends (eg Theorem~\ref{introM}), although the techniques apply in much more generality. Again, the main theme is flexible control of the genus function. We see, for example, that smoothings on $S^1\times\R^3$ realize all values in $\Z^{\ge0}\cup\{\infty\}$ of the minimal genus of the generator at infinity (Theorem~\ref{1h}(a)). The remaining sections are nearly independent of each other. Section~\ref{Submfd} illustrates how badly topological submanifolds and isotopies can fail to respect smooth structures in dimension 4. We define a notion of minimal genus function for arbitrary subsets of a smooth 4--manifold, and show that for tame topologically embedded surfaces and 3--manifolds, this function can be flexibly changed by topological isotopy. In the orientable case, for example, a smooth surface can be topologically adjusted to realize any minimal genus exceeding the original. In Section~\ref{Uncountable}, we compare our genus invariants with older methods of detecting exotic smooth structures. We see that under various hypotheses, our previous examples occur in uncountable families for each choice of the genus functions. The theorems of this section, like those of Section~\ref{Main}, are compatible with Stein structures. This allows us to prove a theorem related to classical complex analysis in $\C^2$. A {\em domain of holomorphy} is an open subset of $\C^2$ (or of another Stein manifold) for which the inherited complex structure is Stein. Corollary~\ref{holomorphy}(a) implies:

\begin{thm}\label{introHolomorphy} For every domain of holomorphy $U$ in $\C^2$ (for example), the inclusion map is topologically isotopic to other embeddings, whose images are also domains of holomorphy and represent uncountably many diffeomorphism types of smoothings of $U$, while having the same genus function.
\end{thm}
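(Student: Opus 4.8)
The plan is to obtain the theorem as a direct application of Theorem~\ref{uncountable}, the remaining work being the realization of the resulting Stein surfaces as domains of holomorphy in $\C^2$. First, by the definition of a domain of holomorphy, the standard smooth structure that $U$ inherits from $\C^2$ underlies a Stein structure. Second, $\C^2$ is the complement of the line at infinity $\CP^1$ in $\CP^2$, so it embeds smoothly in $\CP^2$, hence also in $\overline{\CP^2}$ (the same unoriented smooth manifold); thus $U$ embeds smoothly in $\overline{\CP^2}$, so Theorem~\ref{uncountable} applies with $n=1$. Applied to this Stein model smoothing, it produces uncountably many pairwise nondiffeomorphic smoothings of $U$, each carrying a Stein structure and all sharing the genus functions $G$ and $G_\infty$ of the standard one.

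The substantive step, which is exactly Corollary~\ref{holomorphy}(a) and which I expect to be the main obstacle, is to realize these abstract Stein surfaces as \emph{embedded} domains of holomorphy $V_\lambda\subset\C^2$ with the inclusions $V_\lambda\hookrightarrow\C^2$ topologically isotopic to that of $U$. The idea is that the construction behind Theorem~\ref{uncountable} alters the standard smoothing of $U$ only on a neighborhood of the ends, essentially by splicing in exotic $\R^4$-type (ramified Casson-handle) pieces drawn from an uncountable family. To keep the result inside $\C^2$ one arranges these local pieces to be themselves domains of holomorphy sitting in standard $\C^2$, and performs the splicing complex-analytically: starting from $U\subset\C^2$, one excises a holomorphically convex neighborhood of part of infinity and reglues the exotic domain-of-holomorphy piece, verifying that the new open set $V_\lambda\subset\C^2$ is again holomorphically convex (hence a domain of holomorphy) and carries the prescribed exotic smooth structure.

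Finally, since the construction changes neither the underlying topological 4-manifold nor its proper homotopy type, and since all the smoothings in play lie in the single stable isotopy class of $U$, Freedman's topological uniqueness and isotopy theorems supply a homeomorphism $f_\lambda\co U\to V_\lambda$ that extends to an ambient isotopy of $\C^2$ carrying $U$ onto $V_\lambda$; composing with the inclusion of $V_\lambda$ gives the desired embedding topologically isotopic to $U\hookrightarrow\C^2$, and pulling back the standard smooth structure of $\C^2$ along $f_\lambda$ recovers the $\lambda$-th diffeomorphism type with $G$ and $G_\infty$ unchanged. The hard part is the middle paragraph: combining the gauge-theoretic/Freedman input that creates and distinguishes the smoothings with the complex-analytic demand that the exotic copies be holomorphically convex in $\C^2$, all while remaining topologically pinned to the original $U$.
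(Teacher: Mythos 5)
Your first paragraph matches the paper's opening move: apply Theorem~\ref{uncountable} with $\pi=\id_U$ and the standard (Stein) smoothing, using that compact pieces of $U$ embed orientation-preservingly in $S^4$ (or $\#n\overline{\CP^2}$, though trivially $S^4$ already suffices since $\C^2\cong\R^4$). This produces the uncountable family of abstract Stein smoothings $\{\Sigma_t\}$ with identical $G$ and $G_\infty$.

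The gap is exactly where you flag it. The paper does not ``splice in exotic domain-of-holomorphy pieces and verify holomorphic convexity''; that plan would require one to exhibit the ramified Casson-handle Stein surfaces as holomorphically convex open sets in $\C^2$ in the first place, which is not at all obvious and is not attempted anywhere in the paper. Instead, after Theorem~\ref{uncountable} one already has, from its proof, a \emph{smooth} embedding $j_t\co U_{\Sigma_t}\emb U_\Sigma\subset\C^2$ that is topologically isotopic to the inclusion. The remaining — and decisive — step is the result of \cite{steindiff} (cf.~\cite{CE}~Theorem~13.8): an embedding onto an open subset of a complex surface is \emph{smoothly} isotopic to one with Stein image if and only if the induced complex structure on the domain is homotopic through almost-complex structures to a Stein structure. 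One then checks this homotopy condition for $\Sigma_t$ by noting that $\Sigma$ and $\Sigma_t$ carry Stein structures agreeing near the core $2$-complex $C$, which is a deformation retract of $U$, so the almost-complex homotopy class is unchanged. Without invoking this isotopy-to-Stein-image theorem (or reproving it), your middle paragraph remains a program rather than a proof. It is a genuinely different route in spirit, but it leaves the hardest implication unaddressed.

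Two smaller issues. First, your last paragraph overclaims: the theorem (and the paper) asserts only that the inclusion is topologically isotopic to the new embeddings, \emph{not} ambiently; the paper explicitly remarks that ambient isotopy requires extra hypotheses and is deferred to \cite{steintop}. Invoking ``Freedman's topological uniqueness and isotopy theorems'' to get an ambient isotopy of $\C^2$ is not justified here. Second, the sentence ``pulling back the standard smooth structure of $\C^2$ along $f_\lambda$ recovers the $\lambda$-th diffeomorphism type'' conflates two things: $V_\lambda$ with its induced smooth structure from $\C^2$ \emph{is} the exotic smoothing, by construction; there is nothing further to pull back. The content is that $V_\lambda$ is Stein and that the inclusions of all the $V_\lambda$ are topologically isotopic to each other.
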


\noindent We can also flexibly control the genus function here. Section~\ref{Related} further explores the range of the genus function, and shows how additional information can sometimes be obtained by allowing our surfaces to be immersed.

This paper is a streamlined version of the preprint \cite{MinGen}, which was refereed without mathematical difficulty. For improved readability, some text has been simplified or eliminated, with occasional references indicating what the interested reader can find in the original preprint. Examples~\ref{mess} and \ref{mess2} on noncollared ends, and Theorem~\ref{CH} on Casson handles, are new in this version. The final example of \cite{MinGen} has been deleted. This was posed as an example of an open 4--manifold not known to admit exotic smoothings. However Julia Bennett has observed that exotic smoothings can be constructed on it and detected by the Taylor invariant, since it has ``few essential 3--handles" as in \cite{T}. She has also shown \cite{B} that the Taylor invariant can often be controlled independently of the constructions of this paper, by augmenting Section~\ref{Uncountable} with a novel exotic $\R^4$.

%%%%%%%2222222222222222222222222%%%%%%%%%%%%%

\section{Preliminaries}\label{Background}

We begin by reviewing the genus function and extracting the invariants that we will need. Then we provide background on handlebodies, Stein surfaces, the adjunction inequality and Casson handles.

For any smooth 4--manifold $X$ (possibly with boundary), every homology class $\alpha\in {\rm H}_2(X)$ is represented by a smoothly embedded, closed, oriented surface $F$. (We will frequently refer to such an $F$ as a ``surface representing $\alpha$'', suppressing the remaining adjectives.) The genus $g(F)$ of $F$ can be increased by any positive integer, simply by adding small tubes. One cannot always decrease the genus, however.

\begin{de}\label{genus function} The {\em genus function} $G\co {\rm H}_2(X)\to \Z^{\ge 0}$ is the function assigning to each $\alpha$ the smallest possible genus of a surface $F$ representing $\alpha$.
\end{de}

\noindent We define the genus of a disconnected surface to be the sum of the genera of its components. Then we can always arrange $F$ to be connected, provided that $X$ is. For closed, oriented 4--manifolds, and subsequently for Stein surfaces, the genus function has been studied using Donaldson invariants and their descendants. General open 4--manifolds, however, seem largely unexplored by this method. We will show that the genus function is especially powerful in this setting, due to an obvious inequality: For any smooth embedding $i\co X\to Y$ of 4--manifolds, we have $G(i_*\alpha)\le G(\alpha)$. (We do not require embeddings to be proper.) This allows us to constrain the genus function on $X$ by exploiting Donaldson-type invariants on many manifolds $Y$ simultaneously.

The genus function is an invariant of smooth structures on a 4--manifold $X$ up to isotopy, or more generally up to diffeomorphisms fixing ${\rm H}_2(X)$, but it is not preserved by more general diffeomorphisms between smoothings of $X$. Nevertheless, many diffeomorphism invariants can be constructed from it. One approach we will use is the following: Suppose $X$ is a smooth, oriented 4--manifold with torsion subgroup $T\subset {\rm H}_2(X)$. For $g\in\Z^{\ge 0}$, let $\Gamma_g=\Gamma_g(X)\subset {\rm H}_2(X)/T$ denote the rational span of $G^{-1}[0,g]\cap Q^{-1}[-g,g]$, where $Q$ is the intersection form on ${\rm H}_2(X)$. That is, $\Gamma_g$ is the set of elements of ${\rm H}_2(X)/T$ realized up to scale as linear combinations of surfaces with genus and self-intersection bounded (in absolute value) by $g$. These groups $\Gamma_g$ are nested as $g$ increases, so we obtain a nondecreasing function $\gamma(g)=\Rk(\Gamma_g)$, at least as long as ${\rm H}_2(X)/T$ is free abelian. (This is always true if there is a handlebody whose interior is homeomorphic to $X$, and has only finitely many 3--handles.)

\begin{de}\label{genus filtration} We will call the function $\gamma\co\Z^{\ge 0}\to\Z^{\ge 0}\cup\{\infty\}$ the {\em genus--rank function} of $X$ (or of its given smoothing). The integers $g$ such that $\gamma(g)\ne \gamma(g-1)$ (where $\gamma(-1)=0$) will be called the {\em characteristic genera} of $X$, and the corresponding subgroups $\Gamma_g$ will be called the {\em genus filtration} of $X$.
\end{de}

\noindent Clearly, any diffeomorphism $X\to Y$ (between oriented 4--manifolds but possibly reversing orientation) sends the genus filtration of $X$ to that of $Y$, and the genus--rank function is a diffeomorphism invariant of smooth structures on $X$, determined by the genus function (and intersection form up to sign).

Every smooth manifold $X$ admits a proper Morse function $f\co X\to[0,\infty)$, although infinitely many critical points may be required. In the finite case, it is well known that such a function exhibits $X$ as the interior of a finite handlebody $H$. Similarly, in the infinite case, $X$ is the interior of an infinite handlebody $H$ with the maximal index of its handles equal to that of the critical points of $f$. (See \cite{yfest} for a proof.) We require handles to be attached in order of increasing index, and infinitely many 0--handles are required so as to avoid clustering of attaching regions on compact boundaries. We can now state our theorems in the language of handle theory without loss of generality. We call $H$ a {\em $k$--handlebody} if its handles have index at most $k$, with the case $k=2$ of special interest. Then ${\rm H}^3(X;\Z_2)=0$, so there is a unique stable isotopy class of smoothings. Furthermore, there are no 3--chains, so ${\rm H}_2(X)$ is free abelian (although not necessarily finitely generated). If $H_0$ is a subhandlebody of a 2--handlebody $H$, ie $H$ is made from the handlebody $H_0$ by attaching handles (to handles of lower index), the long exact sequence of the pair shows that ${\rm H}_2(H_0)$ is a direct summand of ${\rm H}_2(H)$. The handle structure is part of the defining data of a handlebody. We always assume (without loss of generality in dimension 4) that the attaching maps defining a handlebody are smooth, although we sometimes topologically embed handlebodies or otherwise construct exotic smoothings on their interiors.

Stein surfaces are complex surfaces (hence oriented 4--manifolds) arising as closed subsets of $\C^N$. They have a long and continuing history in complex analysis, but we will only need a few basic facts about them. (See \cite{GS} for more details and \cite{CE} for a broader perspective.) By work of Eliashberg (implicit in \cite{E}, see also \cite{CE}), a smooth, oriented, open 4--manifold $X$ admits a Stein structure if and only if it has a proper Morse function $X\to[0,\infty)$ with indices at most $2$ and a certain framing condition satisfied. This, in turn, is equivalent (by \cite[Theorem~A.2]{yfest}  in the infinite case) to exhibiting $X$ as the interior of a 2--handlebody with suitably framed 2--handles. The practical effect is that for a 1--handlebody $H_0$, oriented as a 4--manifold, and a proper embedding into $\partial H_0$ of a disjoint union of circles, the latter can always be framed so that the handlebody $H$ obtained by adding 2--handles to these framed circles has Stein interior (inducing the preassigned orientation). Given any such framing, any other framing obtained from it by adding left twists will also determine a manifold admitting a Stein structure. However, adding enough right twists typically produces a manifold that admits no Stein structure. The most basic invariant of a Stein surface $S$ is the Chern class $c_1(S)\in {\rm H}^2(S)$ of its tangent bundle with the induced complex structure. If $S$ is exhibited as the interior of a handlebody $H$ as above, then this class is represented by a {\em Chern cocycle} for the cellular cohomology of the associated CW 2--complex  (with a $k$--cell for each $k$--handle of $H$). The Chern cocycle is well-defined once we fix a complex trivialization of the tangent bundle over the 0-- and 1--handles, for example, by drawing a suitable diagram for the handlebody. Its value on each oriented 2--cell is called the {\em rotation number} of the corresponding (suitably oriented) attaching circle, and can be read from a diagram. If we change $H$ by removing one 2--handle and replacing it, with an additional left twist in the framing, the corresponding rotation number will change by $\pm 1$. We can choose the sign arbitrarily, so we obtain two Stein structures on the same handlebody interior, with two different Chern cocycles. If these cocycles determine different cohomology classes, then the Stein structures are distinguished by their complex tangent bundles (ie they determine almost-complex structures that are not homotopic).

We can now state the adjunction inequality for Stein surfaces. The following version is adapted from Nemirovski \cite{N}.

\begin{thm}\label{adj} Let $F$ be a closed, oriented surface of genus $g(F)$, generically smoothly immersed in a Stein surface $S$, with $k$ positive double points (and some number of negative double points). If $F$ is not a nullhomotopic sphere (or disjoint union thereof), then
$$2g(F)+2k-2\ge F\cdot F+|\langle c_1(S),F\rangle |.$$
\end{thm}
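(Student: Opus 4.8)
The plan is to reduce Theorem~\ref{adj} to the standard adjunction inequality for closed surfaces in a closed symplectic (or Seiberg-Witten basic-class) 4-manifold, via a capping-off argument. First I would reduce to the embedded case: given a generic immersion $F$ with $k$ positive and $\ell$ negative double points, I would resolve each double point by the usual oriented smoothing, which trades each crossing for a tube, producing an embedded surface $F'$ in the same homology class with $g(F')=g(F)+k+\ell$ and $F'\cdot F'=F\cdot F$. However, that throws away the distinction between positive and negative double points, which is precisely what the $2k$ (rather than $2(k+\ell)$) on the left-hand side is designed to retain. So instead I would blow up $S$ at each positive double point: replacing a neighborhood of a positive double point by a copy of $\overline{\CP^2}$ changes the local model so that the two sheets, after resolving using the exceptional sphere, get connected at the cost of adding a tube but subtracting the square of the exceptional class. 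Concretely, after blowing up at all $k$ positive double points one obtains a Stein surface $S'=S\,\#\,k\,\overline{\CP^2}$ (blowing up preserves the Stein condition, by e.g. adding a $(-1)$-framed unknotted $2$-handle, which can be made Legendrian with the correct framing), and a smoothly embedded surface $\widehat F$ representing $F-\sum_{i=1}^k E_i$ with $g(\widehat F)=g(F)+\ell$ and $\widehat F\cdot\widehat F = F\cdot F - k$. The remaining negative double points are resolved by plain tubing, which is why they contribute to genus but not otherwise. Also $\langle c_1(S'),\widehat F\rangle = \langle c_1(S),F\rangle + \sum_i\langle -E_i^\vee, -E_i\rangle$; since $c_1(\overline{\CP^2})$ evaluates on the exceptional sphere as $-1$ (for the relevant orientation), this works out so that $|\langle c_1(S'),\widehat F\rangle| = |\langle c_1(S),F\rangle| + k$ after suitable choices, or can be bounded below by $|\langle c_1(S),F\rangle| - k$; I would need to be slightly careful with signs here but the inequality goes the right way.

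Next I would invoke the adjunction inequality in the form already available for embedded surfaces in Stein surfaces. The cleanest route is: a Stein surface $S'$ (in fact any exhausted-by-Stein-domains open 4-manifold) has the property that for any closed embedded surface $\Sigma$ of positive genus, or genus zero with $\Sigma\cdot\Sigma\ge 0$ and $\Sigma$ not nullhomotopic, one has $2g(\Sigma)-2\ge \Sigma\cdot\Sigma + |\langle c_1(S'),\Sigma\rangle|$. This is the Lisca-Matić / Akbulut-Ozbagci style result: a Stein domain embeds holomorphically in a closed Kähler (hence symplectic) surface, or one uses the Seiberg-Witten adjunction inequality after capping the convex boundary; Nemirovski's paper \cite{N} states exactly this, which is why the theorem is "adapted from \cite{N}." Applying this to $\widehat F$ in $S'$ gives $2(g(F)+\ell)-2 \ge (F\cdot F - k) + |\langle c_1(S),F\rangle|$ (using the sign bookkeeping above to get the $c_1$ term right), and rearranging yields $2g(F)+2\ell+k-2\ge F\cdot F+|\langle c_1(S),F\rangle|$, which is \emph{stronger} than the claimed $2g(F)+2k-2\ge F\cdot F + |\langle c_1(S),F\rangle|$ whenever $k\ge \ell$ — but that need not hold. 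So the blow-up must be done at the \emph{positive} double points precisely so that the $k$ appears with the right coefficient; I would redo the count tracking which resolution (tube vs. blow-up-and-resolve) is forced at each type of double point, the point being that a positive double point blown up contributes $+1$ to genus-deficit via $-E_i^2=+1$ plus the $c_1$ shift, totaling an effective $2$ on the left, whereas a negative double point tubed contributes $2$ to the left as well — so in fact one expects the sharp inequality $2g(F)+2(k+\ell)-2\ge F\cdot F+|\langle c_1(S),F\rangle|$, and the stated theorem with just $2k$ is a weaker consequence obtained by discarding negative double points entirely (i.e. not resolving them, but instead observing they only help).

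The main obstacle I anticipate is the sign/orientation bookkeeping in the blow-up step: getting $\langle c_1(S\,\#\,k\,\overline{\CP^2}), F-\sum E_i\rangle$ to have absolute value at least $|\langle c_1(S),F\rangle|$ requires choosing the orientations of the exceptional classes compatibly with the sign of the double point, and verifying that blowing up at a \emph{positive} double point (as opposed to a negative one) is the operation that simultaneously (i) keeps the surface embedded after resolving along $E_i$ and (ii) shifts $c_1$ in the favorable direction. The nullhomotopic-sphere exclusion must be propagated through: if $\widehat F$ were a nullhomotopic sphere in $S'$ one must check $F$ was a nullhomotopic sphere in $S$, which follows since the blow-down map kills the $E_i$ and is a homotopy equivalence away from the exceptional spheres. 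The other technical point is confirming that blow-ups preserve the Stein condition and that the embedded-surface adjunction inequality is legitimately available for non-compact Stein surfaces (not just Stein domains) — but this is exactly the content cited from \cite{N}, so I would simply quote it. Modulo these sign checks, the argument is a short reduction.
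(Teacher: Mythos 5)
Your proposal misses what the paper's proof is actually doing, and the reduction you sketch has a genuine gap.

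First, the paper's proof is not a proof of the connected, immersed adjunction inequality at all: it takes that as already established in Nemirovski's paper (see the sentence immediately before the proof, ``In \cite{N} and elsewhere in the literature, $F$ is assumed to be connected. Since we need disconnected surfaces, \dots\ we extend to the disconnected case by the proof below''). The entire content of the proof is the reduction of the disconnected case to the connected one: tube the components together away from the double points, observe both sides of the inequality are preserved, and if the inequality fails conclude (via the connected case) that the tubed surface is a nullhomotopic sphere with only negative double points, from which it follows that each component $F_r$ is a nullhomotopic sphere and the $F_r$ are pairwise disjoint. Your proposal does not touch the disconnected issue, so it does not reproduce the paper's argument.

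Second, the reduction you propose — from the immersed version to the embedded version by tubing and/or blowing up — cannot work as stated, and the issue is the negative double points. When $k=0$ and $\ell>0$, the theorem asserts $2g(F)-2\ge F\cdot F+|\langle c_1(S),F\rangle|$, i.e.\ the negative double points may be discarded entirely. Resolving them by tubing produces an embedded surface of genus $g(F)+\ell$, and embedded adjunction then gives only $2(g(F)+\ell)-2\ge F\cdot F+|\langle c_1(S),F\rangle|$, which is strictly weaker. You actually arrive at essentially this weakening (you get $2g(F)+2\ell+k-2\ge\cdots$) and then assert that the stated theorem ``with just $2k$ is a weaker consequence obtained by discarding negative double points entirely.'' This is backwards: enlarging the left-hand side makes an inequality of this shape \emph{easier} to satisfy, so the $2(k+\ell)$ version is the weaker statement and the theorem's $2k$ version the stronger one. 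The fact that negative double points cost nothing is the genuinely nontrivial content of the immersed adjunction inequality for Stein surfaces and is precisely what Nemirovski proves (via Seiberg-Witten/Heegaard Floer methods after embedding in a closed manifold); it cannot be extracted from the embedded case by local resolution, which is why the paper quotes it rather than rederiving it.
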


\noindent In \cite{N} and elsewhere in the literature, $F$ is assumed to be connected, but the statement easily generalizes to the disconnected case \cite{MinGen}. The latter is useful, for example, for dealing with multiple ends of a manifold. The version in \cite{N} also tracks some negative double points. We do not need that version here, and outside of our final Section~\ref{Related}, only need the embedded version (so $k=0$). The inequalities under discussion descend from a long line of adjunction inequalities, originating on closed manifolds via gauge theory pioneered by Donaldson and subsequently upgraded to Seiberg--Witten theory. For further references and a current exposition, see \cite{Fo}.

To construct exotic smooth structures, we will need to replace 2--handles by Casson handles \cite{C}, \cite{F}. Let $h$ be a 2--handle attached to a 4--manifold $X$ along a circle $C\subset\partial X$, and let $D\subset h$ be a generically immersed 2--disk with $\partial D=C$. Then the singularities of $D$ are all transverse double points, which have well-defined signs once an orientation for the 4--manifold $h$ is specified. Let $T_1$ be a compact regular neighborhood of $D$, which we think of as an oriented 4--manifold abstractly attached to $X$ in place of $h$, along the same framed circle. When $D$ is not embedded, we will call $T_1$ a {\em kinky handle} or {\em 1--stage tower} with {\em core} $D$ and {\em attaching region} $T_1\cap X$. The oriented diffeomorphism type of the pair $(T_1,C)$ is determined by the numbers of double points of each sign. In fact, $T_1$ is obtained from a 2--handle $h^*$ by self-plumbing to create the given numbers of double points of each sign in the core disk, although the framing for attaching $h^*$ is obtained from that of $h$ by subtracting twice the signed count $\Self(D)$ of the double points. (The canonical framing of $C$ in $h$ is the unique framing determining a parallel push-off $C'$ of $C$ for which $C$ and $C'$ bound disjoint surfaces in $h$. The framing retains this property in $T_1$. However, the canonical framing of $h^*$ is determined by a parallel push-off of $D$ in $T_1$, which intersects $D$ in $2\Self(D)$ points, counted with sign, and to compensate we must subtract this number from the original framing to get the framing for attaching $h^*$.) There is a canonical local procedure for adding a double point of either sign to an immersed surface in a 4--manifold, and repeatedly applying this to the core of $h$ gives a disk $D$ as above realizing any given numbers of double points of each sign. Furthermore, the local procedure is reversible in that we can find a 2--handle attached to $T_1$ in $h$ near each double point, so that attaching the 2--handles changes $T_1$ back into $h$. If we replace these 2--handles by kinky handles inside them, we obtain a {\em 2--stage tower} $T_2$. Iterating the construction, we obtain {\em $n$--stage towers} for all $n\in\Z_+$, $T_1\subset T_2\subset T_3\cdots$, each obtained from the previous one by attaching kinky handles to a suitable framed link. The union of these towers, with all boundary outside of $X$ removed, is called a {\em Casson handle $CH$} attached to $X$ with {\em attaching circle} $C$ (framed as for $h$ and $T_1$). By construction, $CH$ has a standard embedding in the 2--handle $h$. Note that while each tower has free fundamental group generated by the top-stage double points, the inclusion maps are $\pi_1$--trivial, so $CH$ is simply connected. In fact, Freedman proved that every Casson handle is homeomorphic (rel the framed attaching circle) to an open 2--handle $D^2\times\R^2$, which led immediately to revolutionary developments such as his classification of closed, simply connected, topological 4--manifolds (\cite{F}, as sharpened by Quinn \cite{Q}). The failure of such results in the smooth category, as first discovered by Donaldson, implies that Casson handles are typically not diffeomorphic to open 2--handles (cf.\ Theorem~\ref{CH}). Thus, we obtain exotic smoothings of  $D^2\times\R^2$ by pulling back the smoothings of Casson handles via Freedman homeomorphisms. Replacing 2--handles by Casson handles in a 2--handlebody can now be thought of as creating new smooth structures on its interior.

Casson handles are classified, up to orientation-preserving diffeomorphism respecting the subtowers, by based, signed trees with no finite branches. To define the tree for a given Casson handle, start with a vertex for each kinky handle, with the base point corresponding to $T_1$. Then add an edge for each pair of kinky handles that are directly attached to each other, labeled with the sign of the associated double point. We obtain a bijection sending Casson handles to based signed trees where only the base point is allowed to have valence one. If the based, signed tree for $CH$ is contained in that of $CH'$, then $CH'$ is called a {\em refinement} of $CH$, and by construction, there is an orientation-preserving embedding $CH'\subset CH$ respecting the framed attaching circles (and attaching regions) of the Casson handles. Any finite collection of Casson handles has a common refinement, the quickest construction being to identify the base points of their trees.

Our main tool for constructing exotic smooth structures for which the adjunction inequality can be applied is the following theorem:

\begin{thm}{\rm \cite{Ann}}\label{ann} Every oriented 2--handlebody has interior $X$ orientation-preserving homeomorphic to a Stein surface, whose Chern class pulls back to a preassigned lift of the Stiefel--Whitney class $w_2(X)$.
\end{thm}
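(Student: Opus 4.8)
The plan is to realize the required Stein surface as a new smooth structure $X'$ on $X$, obtained from a handlebody presentation $H=H^{(1)}\cup(\text{$2$-handles})$ by replacing each $2$-handle with a carefully chosen Casson handle. Since every Casson handle is orientation-preserving homeomorphic, rel its framed attaching circle, to an open $2$-handle (Freedman), the resulting $X'$ will be orientation-preserving homeomorphic to $X$; the work is to arrange that $X'$ carries a Stein structure whose Chern cocycle is a preassigned representative of the given lift $c\in{\rm H}^2(X)$ of $w_2(X)$, via the infinite form of Eliashberg's theorem (\cite{yfest}, Theorem~A.2).

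First I would pass to the Legendrian picture. Let $H^{(1)}$ be the subhandlebody of $0$- and $1$-handles, with its standard Stein structure and a fixed complex trivialization of its tangent bundle (equivalently, a Legendrian diagram); let $\xi$ be the induced contact structure on $\partial H^{(1)}$, and let the $2$-handles be attached along framed components $(C_i,f_i)$, $i\in\mathcal I$. Choose an integral cellular cocycle $z$ representing $c$, adapted to this handle structure so that $z$ reduces mod $2$ to the cocycle of $w_2$ read off the diagram; set $z_i=z(e_i^2)$. Because $c$ is a lift of $w_2$, the integer $z_i$ has precisely the parity forced on the rotation number of a Stein-framed $2$-handle attached along $C_i$ (namely $z_i\equiv f_i\bmod 2$, up to the usual $1$-handle corrections).

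The heart of the construction is to attach, in place of $h_i$, a Stein $2$-handle of small framing followed by self-plumbings that restore the original framing, and then to build a Casson tower on top without leaving the Stein category. Fix $p_i\ge 0$ large, and pick a Legendrian representative $\widehat C_i$ of $C_i$ (smoothly isotopic to $C_i$) with $tb(\widehat C_i)=f_i-2p_i+1$ and rotation number $z_i$; such a representative exists once $p_i$ is large, so that the required $tb$ lies below the maximal Thurston--Bennequin number, since $z_i$ has the correct parity. Attach a Stein $2$-handle $h_i^*$ along $\widehat C_i$ with framing $tb(\widehat C_i)-1=f_i-2p_i$, then perform $p_i$ positive self-plumbings of its core; by the framing bookkeeping for kinky handles recalled above, the resulting kinky handle $T_1^i$ is attached along $\widehat C_i$ with framing exactly $f_i$, hence is the first stage of a Casson handle attached along $(C_i,f_i)$. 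To extend $T_1^i$ to a genuine Casson handle, reverse each positive self-plumbing by the standard $0$-framed $2$-handle along a double-point loop (an unknot), but replace that $2$-handle in turn by a kinky handle with enough positive self-plumbings that the $2$-handle it contains is attached along a standard Legendrian unknot (rotation number $0$) with framing $\le -2$; each such framing is even and tends to $-\infty$ as the number of plumbings grows, so this is Stein-legal, and iterating gives trees with no finite branches. After the resulting Casson-handle interiors are reorganized into handlebodies of indices $\le 2$ with suitably framed $2$-handles (the analysis in \cite{yfest}, carried out compatibly with the Stein data), $X'=H^{(1)}\cup\bigcup_i CH_i$ is the interior of an infinite handlebody of indices $\le 2$ in which every $2$-handle is Legendrian with framing at most its contact framing minus one, so $X'$ is Stein.

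Finally I would identify $c_1(X')$. Each Casson handle is contractible, so the classes of the $e_i^2$ still generate ${\rm H}_2$ and the interior cells of the towers contribute only coboundary terms; the Chern cocycle of $X'$ equals $z_i$ on $e_i^2$ and $0$ on every tower cell (those attaching circles being standard Legendrian unknots), and a direct cohomological computation shows that $c_1(X')$ corresponds, under the homeomorphism $X'\to X$, to $[z]=c$, which reduces mod $2$ to $w_2(X)$ as it must. I expect the main obstacle to be the simultaneous bookkeeping in the third paragraph: one must choose the trees and the numbers of self-plumbings at every stage of every tower so that all forced framings stay below the relevant maximal Thurston--Bennequin numbers while the first-stage data still pins down the prescribed rotation numbers, and so that the reorganization into a handlebody of indices $\le 2$ preserves the Stein condition --- in short, one must reconcile the homeomorphism type, Stein-legality at infinitely many stages, and the Chern class all at once. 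The enabling device is the insertion of positive self-plumbings, which lets one drive framings arbitrarily far below the maximal Thurston--Bennequin number. The remaining points (adapting $z$ to the diagram, accommodating infinitely many handles, and gluing the local Stein data) should be routine by comparison.
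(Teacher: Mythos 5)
Your proposal follows essentially the same strategy the paper outlines (and which \cite{Ann} carries out in detail): replace each 2-handle by a Casson handle whose first-stage kinky handle arises from a 2-handle attached along a Legendrian representative stabilized enough to realize both Stein-legality and the prescribed rotation number $z_i$, use positive self-plumbings at every tower stage to keep things Stein, then invoke Freedman's homeomorphism. Your framing bookkeeping ($tb = f_i - 2p_i + 1$, attach with framing $tb-1$, then $p_i$ positive plumbings recover $f_i$) and the parity/stabilization argument for realizing $z_i$ match the paper's sketch, so this is the intended proof.
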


\noindent The main idea is that replacing a 2--handle by a kinky handle with $\Self(D)>0$ is equivalent to replacing it by a new 2--handle with a more left-twisted framing, and then doing self-plumbings. By choosing each $\Self(D)$ sufficiently large, we can arrange the result of attaching the new 2--handles to have Stein interior, and the self-plumbings do not disturb the Stein condition. Iterating the construction, we can build a Stein surface with Casson handles in place of the original 2--handles, and this is homeomorphic to $X$ by Freedman's theorem. Choosing each first-stage $\Self(D)$ even larger provides enough flexibility to realize any preassigned Chern cocycle, yielding the required Chern class, and more generally, any preassigned homotopy class of almost-complex structures \cite{Ann}. The necessary values of  $\Self(D)$ at the first stage depend on the original handlebody and Chern cocycle, and can be computed from a diagram. At higher stages it suffices to use any kinky handles with $\Self(D)>0$, so a single double point (positive) is sufficient.

\section{Controlling the genus function}\label{Main}

The simplest application of Theorem~\ref{ann} to smoothing theory is to construct Stein exotic smoothings on a 2--handlebody interior and distinguish them by the adjunction inequality. We gain extra generality, allowing nonorientable manifolds for example, by passing to a cover first. For this reason, our convention throughout the text is {\em not} to assume orientations on manifolds, unless otherwise stated such as in reference to Stein surfaces or negative definite manifolds, and not to assume that maps such as homeomorphisms preserve orientation. The smoothings resulting from the main Lemma~\ref{main} of this section are (in principle) completely explicit, with the complexity of the Casson handles determined as in Theorem~\ref{ann}. In preparation, we introduce some useful terminology.

\begin{de}
Let $H$ be a 2--handlebody. We will say a smooth structure on $\int H$ has {\em Casson type} if it is obtained from the standard smooth structure by replacing some of the 2--handles with Casson handles. If $H$ is oriented, and the smoothing admits a compatibly oriented Stein structure constructed as in Theorem~\ref{ann}, we will say $\int H$ has {\em Stein--Casson type}.
\end{de}

\noindent Both of these properties pull back under covering maps, as does being a refinement of some Stein--Casson smoothing (ie being a Casson-type smoothing obtained from a Stein--Casson smoothing by refining each Casson handle). Note that by definition, the above smooth and Stein structures are standard on the 0-- and 1--handles, which avoids technicalities with removing handles or attaching new ones.

Our main lemma allows us to flexibly control the genus filtration and characteristic genera for Casson-type smoothings, as well as for their covers. Recall that in a handlebody $H$, a {\em subhandlebody} is a subset consisting of a union of handles of $H$ that themselves comprise a handlebody. There will typically be many ways to write $H$ as a nested union of subhandlebodies, and further flexibility results if we first modify $H$ by handle moves. For a fixed $n\in\{2,3,\dots,\infty\}$, consider a nested sequence $H_1\subset H_2\subset\cdots\subset H_n$ of subhandlebodies of a 4--dimensional 2--handlebody $H_n$, each with possibly infinitely many handles. If $n=\infty$, assume $H_\infty=\bigcup_{i=1}^\infty H_i$. Let $\pi\co \widetilde H_n\to H_n$ be a covering. For $i=1,\dots,n$, let $X_i=\int H_i$ and $\widetilde X_i=\int \widetilde H_i=\pi^{-1}(X_i)$, and let $A_i$ be the image of ${\rm H}_2(\widetilde X_i)$ in $A_n={\rm H}_2(\widetilde X_n)$. (Recall that $A_i\cong{\rm H}_2(\widetilde X_i)$ is a direct summand of the free abelian group $A_n$.)

\begin{lem}\label{main}
Suppose that $\widetilde X_n$ is orientable and that each $A_i$ with $i<n$ has finite rank. Then there is a smooth structure $\Sigma$ on $X_n$ for which each $A_i$ lies in the genus filtration of $\widetilde X_n$. In fact, there is an increasing sequence $\{k_i\thinspace |\thinspace 1\le i<n\}$ of integers such that, for $1\le i<n$, each $A_i=\Gamma_{k_i}(\widetilde X_n)$ equals the span in  $A_n$ of all surfaces $F$ in $\widetilde X_n$ that are smoothly embedded with respect to $\pi^*\Sigma$ and have $g(F)$ and $|F\cdot F|\le k_i$. The sequence $\{k_i\}$ can be chosen to increase arbitrarily rapidly from an arbitrarily large $k_1$.
\end{lem}

\noindent This holds more generally \cite{MinGen} with each $A_i$ replaced by $A_i/A_0$, where $A_0={\rm H}_2(\widetilde X_0)$ for some subhandlebody $H_0\subset H_1$. The extra generality is useful when each $A_i/A_0$ is finitely generated but $A_0$ is an infinite rank homeomorphism-invariant subgroup (eg Example~\ref{bundles}(b)). Note that when $X_1$ is empty, $k_1$ provides a strict lower bound for minimal genera of all nontrivial classes $\alpha$ in ${\rm H}_2(\widetilde X_n)$ with $|\alpha\cdot\alpha|\le k_1$.

To prove the lemma by induction, and for the main lemma of Section~\ref{Ends}, we must be able to preserve a preassigned smoothing on one of the given subhandlebody interiors $X_m$. (This also has immediate consequences such as \cite[Theorem~3.9]{MinGen}.) For simplicity, we assume that each $X_i$ is connected and that $X_1$ is not orientable unless $X_n$ is. (These hypotheses can always be arranged when $X_n$ is connected, by including more 1--handles in each $X_i$. The subgroups $A_i$ then remain unchanged.) Let $\widehat X_i$ denote $X_i$ if it is orientable, and its orientable double cover otherwise. Then $\pi$ factors through $\widehat X_n$. Fix an orientation on $\widehat X_n$ and lift it to $\widetilde X_n$.

\begin{adden}\label{mainadd}
The smoothing $\Sigma$ given by the lemma has Casson type, lifting to a refinement of some Stein--Casson smoothing $\widehat \Sigma$ on $\widehat X_n$ that equals $\Sigma$ when $\widehat X_n=X_n$ and realizes a preassigned homotopy class of almost-complex structures on $\widehat X_n$. The smoothings $\Sigma$ and $\widehat \Sigma$ can be chosen arbitrarily over $X_1$, subject to the conditions of the previous sentence. Similarly, for fixed $m$ with $1<m<n$, they can be chosen to agree with corresponding smoothings previously constructed for $X_m$ by the lemma and addendum, with the same values of $k_1,\dots,k_{m-1}$. Furthermore, $\Sigma$ can be chosen to also be a refinement of a preassigned Casson-type smoothing of $X_n$ (that extends the preassigned one on $X_m$ if given, $m\ge1$).
\end{adden}

\begin{proof}[Proof of Lemma~\ref{main} and Addendum~\ref{mainadd}] If $\Sigma$ and  $\widehat\Sigma$ were not preassigned over $X_1$, we must first construct them. If $\widehat X_1=X_1$, let $\widehat\Sigma=\Sigma$ be any Stein--Casson smoothing of $X_1$ respecting the given orientation and almost-complex structure, and refining the preassigned smoothing. Otherwise, choose a Stein--Casson smoothing $\widehat\Sigma$ on $\widehat X_1$ suitably respecting the data. Each 2--handle $h$ of $X_1$ has two lifts to $\widehat X_1$, inheriting two different Casson handle structures from  $\widehat\Sigma$. Because of the orientation mismatch, orienting $h$ results in one Casson handle having excess positive double points, and the other having excess negativity. Define $\Sigma$ so that $h$ is a common refinement of these two Casson handles (and of the preassigned one if given). 

To complete the proof when $n=2$, extend  $\widehat \Sigma$ to a Stein--Casson smoothing of $\widehat X_2$. Choose a finite collection $\{F_r\}$ of oriented surfaces in $\widetilde X_1$, smoothly embedded with respect to $\pi^*\Sigma$, and spanning $A_1$. Choose any integer $k_1$ larger than each $g(F_r)$ and $|F_r\cdot F_r| $. Let $\{h_l\}$ be the set of 2--handles of $\widetilde H_2$ disjoint from $\widetilde X_1$, oriented as 2--chains lifting a fixed choice of orientations on  $H_2$. For each $l$ and choice of sign, obtain a Stein surface $S_l^\pm$ homeomorphic to $\widetilde X_2$, by lifting  $\widehat \Sigma$ from $\widehat X_2$ to $\widetilde X_2$ and then refining $h_l$, leaving its complement unchanged, so that the Chern cocycle $r(S_l^\pm)$ on $h_l$ is bounded away from 0 by $\pm3k_1$. For fixed sign, we can use the same refinement for each $h_l$ over a given 2--handle of $\widehat X_2$. This allows us to extend $\Sigma$ to a Casson-type smoothing of $X_2$ with each 2--handle interior smoothed as a refinement of each of its lifts to the corresponding Stein surfaces $S_l^\pm$ (and a refinement of the preassigned smoothing if given). Clearly, $A_1$ is spanned as required by the smooth surfaces $\{F_r\}$. However, any cycle representing a homology class outside of $A_1$ has the form $mh_l+\alpha$ for some $l$, where $m$ is nonzero and $\alpha$ is the contribution from other 2--handles. If $F$ is a $\pi^*\Sigma$--smooth surface representing this class with $|F\cdot F|\le k_1$, then it includes smoothly in the two corresponding Stein surfaces $S_l^\pm$. By the adjunction inequality, $2g(F)-2\ge F\cdot F+|\langle c_1(S_l^\pm),F \rangle|\ge-k_1+|m\langle r(S_l^\pm),h_l \rangle+\langle r(S_l^\pm),\alpha \rangle|\ge 2k_1$ if we choose the sign for $S_l^\pm$ so that the two terms in the absolute value bars have the same sign. (Note that the latter of these is independent of the sign.) In particular, $g(F)>k_1$ as required. If $\widehat X_2=X_2$, redefine $\widehat\Sigma$ to be $\Sigma$, which we can assume has Stein--Casson type.

 For $n>2$, we apply induction. For a given $i>2$, we assume $X_{i-1}$ has already been smoothed by the lemma and addendum. We apply the $n=2$ version to the pair $X_{i-1}\subset X_i$, suitably extending the smoothing from $X_{i-1}$ to $X_i$ using the first sentence of the addendum. This extends our previous $\widehat\Sigma$, and hence our previous Stein surfaces $S_l^{\pm}$. Together with the newly constructed Stein surfaces, these give the required lower bounds on minimal genera. (Without reusing the old Stein surfaces $S_l^{\pm}$, it would be conceivable that the new handles of $X_i$ could lower the minimal genus of a class in $A_2-A_1$ below $k_1$, for example.) If $n=\infty$, the induction gives a smoothing on each $X_i$ with $i$ finite. These all agree on their overlaps, so we obtain a smoothing on $X_\infty$, which has the required properties by compactness of the relevant surfaces.
\end{proof}

We can now create many exotic smooth structures, using control of the genus function to distinguish them. The rest of this section illustrates the method with some sample applications. The subsequent two sections apply the lemma more deeply to study minimal genera at infinity.

\begin{thm}\label{inf} Let $X$ be the interior of a (connected) 2--handlebody. Then $X$ admits more than one diffeomorphism type of smooth structure. It admits infinitely many provided that one of the following conditions holds:
\item[a)] ${\rm H}_2(X)\ne 0$,
\item[b)] $X$ is nonorientable and its orientable double cover $\widetilde X$ has ${\rm H}_2(\widetilde X)\ne 0$,
\item[c)] $X$ is not a $K(\pi,1)$,
\item[d)] $X$ has an orientable (connected) cover $\widetilde X$ with ${\rm H}_2(\widetilde X)\ne 0$, and such that $\pi_1(\widetilde X)\subset\pi_1(X)$ has only finitely many images under the homeomorphism group of $X$, up to inner automorphism.
\end{thm}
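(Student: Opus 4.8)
The plan is to produce, for each large integer $k$, a smoothing of $X$ whose genus function is forced to be large on a suitably chosen cover of $X$, and then to repackage this largeness as a diffeomorphism invariant. The underlying mechanism is Lemma~\ref{main} in the degenerate case $n=2$ with $X_0=X_1$ empty: given any covering $\pi\co\widetilde H\to H$ with $\widetilde H$ oriented, it yields a smoothing $\Sigma$ of $X=\int H$ for which every surface in $\widetilde X=\pi^{-1}(\int H)$ that is smoothly embedded with respect to $\pi^*\Sigma$ and has both $g(F)\le k$ and $|F\cdot F|\le k$ is nullhomologous. Suppose now $\widetilde X$ is connected with ${\rm H}_2(\widetilde X)\ne 0$ (automatically free abelian), let $q$ be the minimum of $|\beta\cdot\beta|$ over nonzero $\beta\in{\rm H}_2(\widetilde X)$ --- a finite number depending only on the underlying topological manifold --- and set
$$\mu(\Sigma)=\min\bigl\{\,G(\beta)\ :\ 0\ne\beta\in{\rm H}_2(\widetilde X),\ |\beta\cdot\beta|\le q\,\bigr\},$$
where $G$ is the genus function of $\widetilde X$ equipped with $\pi^*\Sigma$. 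This minimum is always finite since every class has an embedded representative, and the lemma gives $\mu(\Sigma)>k$ as soon as $k\ge q$. Because the set over which we minimize and the intersection form are topological data, and $\mu$ depends only on minimal genera, $\mu$ is an invariant of the diffeomorphism type of $(\int H,\Sigma)$ whenever every self-homeomorphism of $X$ lifts to $\widetilde X$; letting $k\to\infty$ then exhibits infinitely many diffeomorphism types. (By Addendum~\ref{mainadd} these smoothings have Casson type and lift to refinements of Stein--Casson smoothings, hence are Stein when $X$ is orientable.)

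It remains to choose $\pi$ in each case. If $X$ is orientable, (a) lets us take $\pi=\id$. If $X$ is nonorientable, take $\pi$ to be the orientation double cover $\widetilde X\to X$: this cover is canonical, so every self-homeomorphism of $X$ lifts, and since ${\rm H}_2(X)$ is torsion-free the transfer homomorphism shows ${\rm H}_2(X)\ne 0$ forces ${\rm H}_2(\widetilde X)\ne 0$ --- so (a) reduces to (b), and (b) is precisely this situation. For (c): $X$ is homotopy equivalent to a $2$-complex, so if $X$ is not aspherical its universal cover $\widehat X$ is a simply connected, noncontractible $2$-complex, whence ${\rm H}_2(\widehat X)\cong\pi_2(\widehat X)=\pi_2(X)\ne 0$ by Hurewicz; moreover $w_1(\widehat X)=0$, so $\widehat X$ is orientable, and every self-homeomorphism of $X$ lifts to it. In each case the mechanism of the first paragraph applies verbatim.

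Case (d) requires handling several covers simultaneously. Let $G_1=\pi_1(\widetilde X),G_2,\dots,G_m$ be subgroups of $\pi_1(X)$ representing the finitely many conjugacy classes of images of $\pi_1(\widetilde X)$ under the homeomorphism group of $X$, and let $\widetilde X_1=\widetilde X,\dots,\widetilde X_m$ be the corresponding covers. Since $w_1$ is preserved by homeomorphisms, each $G_j\subseteq\ker w_1$, so every $\widetilde X_j$ is orientable; and each $\widetilde X_j$ is homeomorphic to $\widetilde X$ via a lift of the homeomorphism realizing the conjugacy, hence has ${\rm H}_2(\widetilde X_j)\ne 0$. Apply Lemma~\ref{main} to the \emph{disconnected} covering $\pi\co\bigsqcup_j\widetilde H_j\to H$ (with each $\widetilde H_j$ oriented), again with $n=2$ and $X_1$ empty. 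A surface lying in a single summand $\widetilde X_j$ has the same genus and self-intersection when viewed in $\bigsqcup_j\widetilde X_j$, so the lemma forces: for every $j$, each $\pi_j^*\Sigma$-smooth surface in $\widetilde X_j$ with $g,|F\cdot F|\le k$ is nullhomologous. Hence, with $q$ the common minimum self-intersection in the (mutually homeomorphic) $\widetilde X_j$, the quantity $\nu(\Sigma)=\min_j\mu_j(\Sigma)$ --- where $\mu_j$ is $\mu$ computed in $\widetilde X_j$ --- satisfies $\nu(\Sigma)>k$ once $k\ge q$. Finally, the homeomorphism group of $X$ permutes $\{[G_1],\dots,[G_m]\}$, so any diffeomorphism $(\int H,\Sigma)\to(\int H,\Sigma')$ lifts, for each $j$, to a diffeomorphism $(\widetilde X_j,\pi_j^*\Sigma)\to(\widetilde X_{\sigma(j)},\pi_{\sigma(j)}^*\Sigma')$ for the induced permutation $\sigma$; thus $\mu_j(\Sigma)=\mu_{\sigma(j)}(\Sigma')$ for all $j$, so $\nu(\Sigma)=\nu(\Sigma')$. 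Letting $k\to\infty$ yields infinitely many diffeomorphism types.

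For the first assertion (more than one diffeomorphism type, with no hypothesis beyond the Morse function): whenever some cover of the above types has nonzero second homology, $X$ already falls under (a)--(d); the remaining $X$ are aspherical with vanishing ${\rm H}_2$ in the orientation double cover (and in every finite cover), and are handled by running the identical argument with the minimal genus at infinity $G_\infty$ in place of $G$ (Section~\ref{Ends}), which needs no homological hypotheses, with $\R^4$ and similar examples being classical. I expect the main obstacle to be case (d): one must check that a disconnected cover is admissible input for Lemma~\ref{main}, that a single application then forces \emph{all} of the finitely many invariants $\mu_j$ to be large at once, and --- most delicately --- that $\nu$ is genuinely a diffeomorphism invariant, which rests on the homeomorphism group acting on the finite set of conjugacy classes (the role of the finiteness hypothesis in (d)) and on lifting diffeomorphisms compatibly through the resulting permutation. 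A lesser point, already present in (b) and (c), is ensuring the covers used are canonical enough that arbitrary self-homeomorphisms lift.
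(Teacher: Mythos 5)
Your argument for cases (a)--(d) is correct and runs on the same engine as the paper (Lemma~\ref{main} with $n=2$ and $X_1$ empty, plus the adjunction-derived genus bound), but with two packaging differences worth noting. First, you extract explicit diffeomorphism invariants $\mu$ and $\nu$ from the genus function, whereas the paper makes an iterative argument: fix $\Sigma$, pick essential surfaces of controlled genus in each of the finitely many covers $\widetilde X_m$, apply the lemma to get $\Sigma'$ whose lift has strictly larger minimal genus, rule out a diffeomorphism by the lifting argument, and repeat. Your invariant-based phrasing is clean and equivalent. Second, for (d) you feed the \emph{disjoint-union} cover $\bigsqcup_j\widetilde H_j\to H$ into Lemma~\ref{main}, which is admissible (the lemma places no connectedness requirement on $\widetilde H_n$, and with $A_0=A_1=0$ the finiteness hypothesis is vacuous), and forces all $\mu_j$ large in one pass; the paper instead applies the lemma to the single cover $\widetilde X$ and compares genera across the finitely many $\widetilde X_m$ by lifting a putative diffeomorphism. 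Both work, and your lifting argument establishing $\nu(\Sigma)=\nu(\Sigma')$ via the induced permutation $\sigma$ is sound.

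The gap is in the first assertion---that $X$ always admits more than one diffeomorphism type with no hypothesis. Once (a)--(d) all fail you are left with $X$ a $K(\pi,1)$, and your proposal to ``run the identical argument with $G_\infty$ in place of $G$, which needs no homological hypotheses'' does not hold up. $G_\infty$ does require homological input: to distinguish smoothings via minimal genus at infinity one needs $\Hinf\ne 0$ in some suitable cover (or the more elaborate nested 2-system apparatus with some $B_i\ne 0$), and this is not guaranteed for an arbitrary $K(\pi,1)$ 2-handlebody interior. Your intermediate claim that the remaining $X$ have vanishing ${\rm H}_2$ in every finite cover is also not a correct consequence of (a)--(d) failing when $\pi_1(X)$ is not finitely generated. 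The paper's actual argument here is of a different nature: the universal cover is contractible and has no 3-handles, so its Taylor invariant vanishes (Theorem~4.3 of \cite{T}); end-summing $X$ with an exotic $\R^4$ of nonzero Taylor invariant then produces a second smoothing of $X$ whose universal cover has nonzero Taylor invariant, and these two are not diffeomorphic because their universal covers are not. This Taylor-invariant step is not a reformulation of your $G_\infty$ sketch---it is the genuinely new ingredient needed to close the base case, and the resulting exotic structure cannot be of Casson type (its universal cover requires infinitely many 3-handles), unlike everything produced by the genus arguments.
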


\noindent The orientable case of (a) is proved up to isotopy in \cite[Theorem~9.4.29(b)]{GS} by a simpler application of the same idea. The $\pi_1$ condition in (d) is always true for a cover $\widetilde X$ of finite degree $d$ when $\pi_1(X)$ is finitely generated, for then the latter is realized by a 2--complex with finite 1--skeleton, which has only finitely many $d$--fold covers. The smoothings of (a--d) have Stein--Casson type when $X$ is oriented.

\begin{proof} For nonorientable $X$, Hypothesis (a) implies (b), since $b_2(\widetilde X)\ge b_2(X)$. The covering map in (b) is uniquely determined by the homeomorphism type of $X$, as is the universal covering in (c), so (a), (b) and (c) are all special cases of (d). In that case, the hypothesis on $\pi_1$ guarantees that $X$ has finitely many coverings $\widetilde X_m\to X$ such that every self-homeomorphism of $X$ lifts to a homeomorphism $\widetilde X\to\widetilde X_m$ for some $m$. Let $\Sigma$ be a smoothing of $X$. For each $m$, ${\rm H}_2(\widetilde X_m)\cong {\rm H}_2(\widetilde X)\ne 0$, so we can choose a homologically essential surface in $\widetilde X_m$ that is smooth with respect to the lift of $\Sigma$. Let $g_m$ and $q_m$ be its genus and self-intersection. Lemma~\ref{main} with $n=2$ and $H_1$ empty gives us another smoothing $\Sigma'$ of $X$ for which every smooth essential surface $F$ in its lift to $\widetilde X$ has $g(F)$ or $|F\cdot F|>\max_m\{g_m,|q_m|\}$. There can be no diffeomorphism from $X_{\Sigma'}$ to $X_\Sigma$, for this would be a self-homeomorphism of $X$. Lifting to  a homeomorphism $\widetilde X\to\widetilde X_m$ for some $m$, we would obtain a diffeomorphism between the corresponding lifts of $\Sigma'$ and $\Sigma$. But by construction, the latter lift has an essential surface of genus $g_m$ and self-intersection $q_m$, whereas the former cannot, yielding the required contradiction. Now repeat the entire construction with $\Sigma'$ in place of $\Sigma$, and inductively obtain a sequence of nondiffeomorphic smooth structures on $X$.

To see that $X$ always admits more than one smooth structure, it now suffices to consider the case when $X$ is a $K(\pi,1)$. In that case, the universal cover $\widetilde X$ is contractible and has no 3--handles, so by  \cite[Theorem~4.3]{T}, the standard smooth structure on $\widetilde X$ has vanishing Taylor invariant. If we form the end-sum of $X$ with an exotic $\R^4$ (cf.\ Section~\ref{Uncountable}) whose Taylor invariant is nonzero, then the universal cover will have nonzero (possibly infinite) Taylor invariant. Thus, we have two smooth structures on $X$ with nondiffeomorphic universal covers.
\end{proof}

Note that unlike our previous orientable examples, the exotic smooth structure we obtain when (a--d) fail cannot be Stein or Casson type. This is because its universal cover has nonzero Taylor invariant, so admits no handle decomposition without 3--handles. In fact, any handle decomposition of the exotic $\R^4$ summand requires infinitely many 3--handles \cite{T}, and no such decomposition is explicitly known, making this case the only nonconstructive proof in this section.

It should not be surprising that when ${\rm H}_2( X)$ has infinite rank, the genus--rank function distinguishes uncountably many diffeomorphism types of smoothings, and these are Stein--Casson if $X$ is oriented. The proof merely requires careful bookkeeping. More generally, we have the following \cite{MinGen}, where the $\pi_1$--condition again is automatic if $\pi_1(X)$ is finitely generated:

\begin{thm}\label{cantor} Let $X$ be the interior of a 2--handlebody. Suppose $X$ has an orientable finite cover $\widetilde X$ with ${\rm H}_2(\widetilde X)$ not finitely generated, and such that $\pi_1(\widetilde X)\subset\pi_1(X)$ has only countably many images under the homeomorphism group of $X$, up to inner automorphism. Then $X$ admits uncountably many diffeomorphism types of smooth structures, distinguished by their genus--rank functions on $\widetilde X$. \qed
\end{thm}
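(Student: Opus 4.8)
The plan is to feed an uncountable family of rate sequences into Lemma~\ref{main}, applied to the cover $\widetilde X\to X$, and then to exploit the countably many covers supplied by the hypothesis. Write $X=\int H$ with all indices $\le 2$, so that ${\rm H}_2(\widetilde X)$ is free abelian, of infinite rank by hypothesis. As in the proof of Theorem~\ref{inf}, the condition on $\pi_1(\widetilde X)$ yields countably many covers $\widetilde X=\widetilde X_1,\widetilde X_2,\dots$ of $X$ (one per conjugacy class of images of $\pi_1(\widetilde X)$) such that every self-homeomorphism of $X$ lifts to a homeomorphism $\widetilde X\to\widetilde X_m$ for some $m$; each $\widetilde X_m$ is again orientable, since homeomorphisms preserve $w_1$, with ${\rm H}_2(\widetilde X_m)$ free abelian. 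Fix an exhaustion $X=\bigcup_{i\ge0}X_i$ by interiors of finite subhandlebodies with $X_0$ empty; then $\widetilde X_i:=\pi^{-1}(X_i)$ exhausts $\widetilde X$, each $A_i:={\rm H}_2(\widetilde X_i)$ is a finite-rank direct summand of ${\rm H}_2(\widetilde X)$, and $\bigcup_i A_i={\rm H}_2(\widetilde X)$. After passing to a subsequence, assume the ranks $\rho_i=\Rk A_i$ strictly increase.

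For each admissible (sufficiently rapidly increasing) sequence $\mathbf k=(k_1<k_2<\cdots)$, Lemma~\ref{main} with $n=\infty$, $H_0$ empty, and covering $\pi$ supplies a Casson-type smoothing $\Sigma_{\mathbf k}$ of $X$ with $A_i=\Gamma_{k_i}(\widetilde X)$ for $\pi^*\Sigma_{\mathbf k}$; so the genus-rank function $\gamma_{\mathbf k}$ of $\pi^*\Sigma_{\mathbf k}$ on $\widetilde X$ is nondecreasing with $\gamma_{\mathbf k}(k_i)=\rho_i$, and hence $\rho_{i-1}\le\gamma_{\mathbf k}(g)\le\rho_i$ for $k_{i-1}\le g\le k_i$. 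The crux is to exhibit a family $\mathcal K$ of admissible sequences, of cardinality the continuum, for which the functions $\gamma_{\mathbf k}$ are pairwise distinct. I would index $\mathcal K$ by branches of the binary tree, choosing each $k_i$ from one of two admissible values in widely separated ``bands'' according to the $i$-th bit of the branch. To recover the branch from $\gamma_{\mathbf k}$ one must locate the characteristic genera precisely; the tool is that each $2$-handle introduced at a given stage of the construction is attached with a framing forced very negative --- so that large rotation numbers are realizable in the Stein models $S^\pm_l$ appearing in the proof of Lemma~\ref{main} --- whence the corresponding class in $A_{i+1}\setminus A_i$ has self-intersection large in absolute value and, by the adjunction inequality applied in those $S^\pm_l$, a genus bound, which together confine each characteristic genus to a controlled window and let the band containing $k_i$, and so the branch, be read off. \emph{This distinctness step is the main obstacle:} Lemma~\ref{main} pins down $\Gamma_g$ only at the thresholds $g=k_i$, so producing genuinely distinct genus-rank functions --- not merely the prescribed values $\rho_i$ at those thresholds --- is exactly where the ``arbitrarily rapidly increasing characteristic genera'' clause and the framing/adjunction estimates have to be combined, with the choice of bands made accordingly.

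Granting this, we conclude with a cardinality argument. Suppose only countably many diffeomorphism types occurred among $\{\Sigma_{\mathbf k}:\mathbf k\in\mathcal K\}$, and pick for each type a representative; write $\Sigma_0$ for a representative of the type of a given $\Sigma_{\mathbf k}$. A diffeomorphism $X_{\Sigma_{\mathbf k}}\to X_{\Sigma_0}$ is a self-homeomorphism of $X$, hence lifts to a diffeomorphism $\widetilde X_{\pi^*\Sigma_{\mathbf k}}\to(\widetilde X_m)_{\pi_m^*\Sigma_0}$ for some $m$; since the genus-rank function is a diffeomorphism invariant of oriented $4$-manifolds up to orientation (Definition~\ref{genus filtration} uses only the genus function and $Q^{-1}[-g,g]$), $\gamma_{\mathbf k}$ equals the genus-rank function of $\pi_m^*\Sigma_0$ on $\widetilde X_m$. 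For a fixed representative $\Sigma_0$ these functions, over $m\in\mathbb N$, form a countable set, so over the countably many representatives only countably many functions $\gamma_{\mathbf k}$ arise --- contradicting the previous paragraph. Hence uncountably many diffeomorphism types occur; since the distinctness step in fact produces continuum-many distinct $\gamma_{\mathbf k}$, and since the set of smooth structures on the second-countable manifold $X$ up to diffeomorphism has cardinality at most the continuum, the number of diffeomorphism types is exactly the continuum in ZFC, realized by a family distinguished by its genus-rank functions on $\widetilde X$.
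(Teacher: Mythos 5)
Your overall strategy — feed data into Lemma~\ref{main} to manufacture continuum-many genus-rank functions on $\widetilde X$, then use the countably-many-covers hypothesis to show each diffeomorphism type of $X$ absorbs only countably many of them — matches the paper's. But the way you propose to generate distinct genus-rank functions is genuinely different, and it is exactly where the argument breaks.

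You fix the filtration $\{A_i\}$ once and for all and try to vary the \emph{thresholds} $\{k_i\}$, encoding a branch of the binary tree into the choice of each $k_i$ from two widely separated bands. You correctly flag the resulting problem: Lemma~\ref{main} pins down $\Gamma_g$ only at $g=k_i$, giving $\rho_{i-1}\le\gamma(g)\le\rho_i$ for $k_{i-1}\le g\le k_i$ but no control over \emph{where} in that window the jump occurs, so two sequences $\mathbf k\ne\mathbf k'$ could a priori yield the same function $\gamma$. Your proposed repair does not work: you argue that a class in $A_{i+1}\setminus A_i$ ``has self-intersection large in absolute value'' because the Stein models $S_l^\pm$ use very negative framings, and that this confines the characteristic genus to a band. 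But the self-intersection form on ${\rm H}_2(\widetilde X_n)$ is a topological invariant of the handlebody, unchanged by the choice of smoothing or Casson-handle refinement. The very negative framings belong to the auxiliary Stein surfaces $S_l^\pm$ (which are homeomorphic, not diffeomorphic, to $\widetilde X_n$), and the large rotation numbers there feed into the $c_1$ term of the adjunction inequality, not into $F\cdot F$ in $\widetilde X_n$. If the given handlebody has 2-handles of self-intersection $0$ (or small), nothing in the construction moves those self-intersections, so your filter $Q^{-1}[-g,g]$ does not exclude the relevant classes for small $g$, and the window for the characteristic genus is not controlled.

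The paper sidesteps this entirely by reversing your roles of filtration and thresholds. For $r\in[0,1)$ with binary digits, it applies Lemma~\ref{main} with $n=\infty$ to the \emph{subsequence} $\{X_{n(r,i)}\}$ where $n(r,i)$ counts zeroes among the first $i$ digits; by the inductive form of Addendum~\ref{mainadd}, the $2^i$ smoothings in play at stage $i$ can all share the \emph{same} $k_i$. Then when $r$ and $s$ first differ at digit $i$, the subgroups $\Gamma_{k_i}(\widetilde X_{\pi^*\Sigma_r})=A_{n(r,i)}$ and $\Gamma_{k_i}(\widetilde X_{\pi^*\Sigma_s})=A_{n(s,i)}$ have different ranks, so $\gamma_r(k_i)\ne\gamma_s(k_i)$: distinctness is read off at a common threshold, with no need to locate jumps inside the windows. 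This is the move you are missing. The rest of your argument (the countability-of-covers step and the cardinality conclusion à la DeMichelis--Freedman) is sound, though the paper phrases it positively — each diffeomorphism type occurs for only countably many $r$, because the collection of smoothings diffeomorphic to a fixed $\Sigma_r$ injects into the countable set of covers $\widetilde X_m\to X$ — rather than by contradiction, which yields the exact cardinality more cleanly.
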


\begin{examples}\label{bundles} (a) In \cite{GS} (following Theorem~9.4.29), the infinite connected sum of copies of $S^2\times S^2$ with a single end was given  as an example with infinitely many isotopy classes of smoothings, but which was not known to admit uncountably many smoothings. Theorem~\ref{cantor} distinguishes uncountably many diffeomorphism types, with considerable flexibility in the resulting genus filtration, and hence in the group of self-diffeomorphisms. See \cite{MinGen} for more details on the examples in this section.

\item[b)] Let $\{F_m\}$ be a countable, nonempty family of closed, connected surfaces (not necessarily orientable), and let $X$ be an end-sum of $\R^2$--bundles over these. When the family $\{F_m\}$ is finite, \cite{BE} produces (countably) infinitely many smoothings, and if $X$ is also orientable, there are uncountably many smoothings, cf.\ \cite[Corollary 9.4.25]{GS}, up to diffeomorphism in each case. These smoothings are made by end-sum with an exotic $\R^4$, which cannot increase minimal genera or obstruct orientation-preserving self-diffeomorphisms. In contrast, our method gives infinitely many diffeomorphism types with control on minimal genera, even when $\{F_m\}$ is infinite: One can construct a homeomorphism-invariant finite cover $\widetilde X$ of $X$ (with degree 1,2 or 4), with orientable domain, such that each lift of each $F_m$ is orientable \cite{MinGen}, then apply Theorem~\ref{inf}(d). For infinite $\{F_m\}$, such manifolds $X$ were posed in \cite{GS} (with each $F_m$ diffeomorphic to $\R P^2$) as examples not known to have exotic smooth structures. Applying Theorem~\ref{cantor} to $\widetilde X$, we obtain uncountably many diffeomorphism types of exotic smoothings on $X$, again with control of self-diffeomorphisms. For example, when everything is orientable, the subgroup $A_0\subset{\rm H}_2(X)$ generated by surfaces $F_m$ with a fixed upper bound on genus is preserved by continuous maps. We can arrange the diffeomorphism group to act transitively on infinite collections of such surfaces (when such collections exist with fixed genus and Euler number) while disallowing any permutations among homology classes of higher genus surfaces $F_m$. (Apply Lemma~\ref{main} relative to the infinite rank subgroup $A_0$.)

\item[c)] We can sometimes use Lemma~\ref{main} to study an open 4--manifold $X$ that is not a 2--handlebody interior. Theorem~\ref{MxR} does this when $X=M^3\times\R$ by embedding it in a 2--handlebody. For a different approach, suppose $L\subset X$ is a properly, tamely embedded 1--complex with image invariant (up to proper homotopy) under homeomorphisms of $X$. There is then a well-defined map from isotopy or diffeomorphism types of smoothings $\Sigma$ on $X$ to those on $X-L$, by first isotoping $\Sigma$ so that $L$ is smooth, then deleting $L$ (cf.\ \cite{CH}). It then suffices to distinguish the image smoothings in $X-L$ or note that the map preserves $G$. For example, if $X$ is a finite connected sum of 2--handlebody interiors (so ${\rm H}_3(X)\ne0$), we can take $X-L$ to be their end-sum, and Lemma~\ref{main} produces smoothings that are distinguished both on $X-L$ and $X$.
\end{examples}

When ${\rm H}_2(X)$ is finitely generated, we obtain complete control of the genus filtration.

\begin{thm}\label{filt}
Let $X$ be the interior of a connected 2--handlebody. Suppose that $X$ is oriented and ${\rm H}_2(X)$ is finitely generated. Then any filtration of ${\rm H}_2(X)$ consisting of $b_2(X)+1$ distinct direct summands is realized as the genus filtration of some Stein--Casson smoothing on $X$, and the characteristic genera can be chosen to increase arbitrarily rapidly.
\end{thm}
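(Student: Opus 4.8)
The plan is to deduce the theorem from Lemma~\ref{main} after replacing the given handle structure on $X$ by one whose subhandlebodies realize the prescribed flag. Write the filtration as $0=A_0\subsetneq A_1\subsetneq\cdots\subsetneq A_{b_2}={\rm H}_2(X)$; being a chain of $b_2+1=b_2(X)+1$ distinct direct summands it must have $\Rk A_i=i$, so $A_0=0$, $A_{b_2}={\rm H}_2(X)$, each $A_i/A_{i-1}\cong\Z$, and ${\rm H}_2(X)$ is free abelian of rank $b_2$ (there are no $3$-handles). Since $X$ is oriented I will apply Lemma~\ref{main} and Addendum~\ref{mainadd} with the trivial cover $\pi=\id\co X\to X$, which makes the resulting smoothing Stein-Casson.

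First I would build an adapted handle structure. Starting from any handlebody $H$ with $\int H=X$ and all indices $\le 2$, write $\partial_2\co C_2\to C_1$ for the cellular boundary map of its handle chain complex, so that ${\rm H}_2(X)=\ker\partial_2$. Because $C_2/\ker\partial_2\cong\im\partial_2\subseteq C_1$ is torsion-free, $\ker\partial_2$ is a direct summand of the (possibly infinitely generated) free group $C_2$; choose a basis $v_1,\dots,v_{b_2}$ of $\ker\partial_2$ adapted to the flag, i.e.\ with $\langle v_1,\dots,v_i\rangle$ mapping onto $A_i$. Each $v_m$ is a finite combination of $2$-handles, so only finitely many $2$-handles occur among the $v_m$; extend $v_1,\dots,v_{b_2}$ to a basis of the span of these and realize the base change by a \emph{finite} sequence of handle slides among them (so no limiting issues arise). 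This yields $2$-handles $\tilde h_1,\dots,\tilde h_{b_2}$ representing $v_1,\dots,v_{b_2}$ --- each with attaching circle nullhomologous in the union of $0$- and $1$-handles, hence bounding a Seifert surface there --- and with $\ker\partial_2=\langle[\tilde h_1],\dots,[\tilde h_{b_2}]\rangle$. Let $H_0$ be the union of the $0$- and $1$-handles together with all $2$-handles except the $\tilde h_m$, and set $H_i=H_0\cup\{\tilde h_1,\dots,\tilde h_i\}$. Each $H_i$ is a subhandlebody, $H_{b_2}=H$, ${\rm H}_2(\int H_0)=0$, and the image of ${\rm H}_2(\int H_i)$ in ${\rm H}_2(X)$ is $A_i$.

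Then I would feed Lemma~\ref{main} and Addendum~\ref{mainadd} the padded nested sequence $\hat H_0=\hat H_1=H_0$, $\hat H_j=H_{j-1}$ for $2\le j\le b_2+1$, $\hat H_{b_2+2}=H$ (so $n=b_2+2$ and $A_0=0$). This produces a Stein-Casson smoothing $\Sigma$ of $X$ together with integers $k_1<\cdots<k_{b_2+1}$ such that $\Gamma_{k_1}(X)=\hat A_1=0$ and $\Gamma_{k_j}(X)=\hat A_j=A_{j-1}$ for $2\le j\le b_2+1$. For any $g$, $\Gamma_g(X)$ is a direct summand (a rational span of surfaces), and by monotonicity of the filtration it is trapped between two consecutive of the subgroups $0=\Gamma_{k_1}(X)\subseteq\cdots\subseteq\Gamma_{k_{b_2+1}}(X)={\rm H}_2(X)$, i.e.\ $A_{i-1}\subseteq\Gamma_g(X)\subseteq A_i$ for some $i$; since $A_i/A_{i-1}\cong\Z$, this forces $\Gamma_g(X)\in\{A_{i-1},A_i\}$. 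Hence the genus filtration of $\Sigma$ takes only the values $A_0,\dots,A_{b_2}$, each of which it attains (e.g.\ $\Gamma_0(X)=0$ and $\Gamma_{k_j}(X)=A_{j-1}$), so it equals the given flag on the nose. Maximality of the flag is precisely what makes this argument go through --- for a non-maximal chain, some $\Gamma_g(X)$ could be a direct summand strictly between two terms.

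Finally, for the growth rate: writing $g_i$ for the characteristic genus at which $\gamma$ jumps from $i-1$ to $i$, we have $\Gamma_g(X)=0$ for $g\le k_1$ by monotonicity, so $g_1>k_1$, and in general $g_i\in(k_i,k_{i+1}]$, whence $g_{i+1}>k_{i+1}$. Conversely $g_i\le B_i$ for some $B_i$ depending only on $k_1,\dots,k_i$ and the starting handlebody, because each generator $[\tilde h_m]$ of $A_i$ ($m\le i$) is represented by a smoothly embedded surface of genus $O(k_m)$ and bounded self-intersection --- cap the fixed Seifert surface of $\tilde h_m$'s attaching circle with the embedded surface obtained by resolving the $O(k_m)$ double points of the first-stage immersed core of $\tilde h_m$'s Casson handle --- so that $A_i\subseteq\Gamma_{B_i}(X)$, forcing $g_i\le B_i$ once $k_{i+1}>B_i$. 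As Lemma~\ref{main} allows each $k_{i+1}$ to be taken as large as we wish after $k_1,\dots,k_i$ are chosen, we can arrange $g_{i+1}>k_{i+1}>f(B_i)\ge f(g_i)$ for any prescribed increasing $f$, i.e.\ the characteristic genera increase arbitrarily rapidly. The step I expect to be the main obstacle is the handle-slide reduction of the second paragraph --- establishing that \emph{every} maximal flag of direct summands of ${\rm H}_2(X)$ is realized by the subhandlebodies of a suitable handle decomposition; once that is in hand, Lemma~\ref{main} supplies the genus control and the maximality remark pins down the filtration.
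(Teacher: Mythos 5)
Your proof is correct and follows the same strategy as the paper's three-line argument: slide handles so that $b_2(X)$ designated 2-handles carry a basis adapted to the flag, then apply Lemma~\ref{main} with $\pi=\id_X$. You fill in two details the paper leaves implicit---that maximality of the flag forces the genus filtration to be exactly the prescribed one (since a pure subgroup trapped between consecutive $A_{i-1}\subsetneq A_i$ with $\Z$ quotient must equal one of them), and that the upper bound on each characteristic genus $g_i$ is fixed before $k_{i+1}$ is chosen---both of which are needed and correctly handled.
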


\begin{proof} Let $\alpha_1,\dots,\alpha_{b_2(X)}$ be a basis for ${\rm H}_2(X)$ such that the $i^{th}$ subgroup in the filtration is the span of  $\alpha_1,\dots,\alpha_{i-1}$. This basis is carried by some finite subhandlebody of $X$. Slide handles so that the first $b_2(X)$ 2--handles each represent the corresponding elements $\alpha_i$. Now apply Lemma~\ref{main} with $\pi=\id_X$.
\end{proof}

Having exploited lower bounds for the genus function, we now investigate how to control it more precisely. Our method can be applied in general via Legendrian Kirby diagrams, but we focus on a case avoiding this technology. Let $X$ be an orientable manifold that is an $\R^2$--bundle over a closed, connected surface $F$ of genus $g$. Let $\alpha$ be a generator of ${\rm H}_2(X)$ if $F$ is orientable, and otherwise a generator of ${\rm H}_2(\widetilde X)$ for the double cover $\widetilde X$ to which $F$ lifts orientably (which is a bundle over an orientable surface of genus $g-1$). We determine all possible values of $G(\alpha)$ when $F$ is orientable, with a similar statement in the nonorientable case.

\begin{thm}\label{bundle} For the bundle $X\to F$ as above,
\begin{itemize}
 \item[a)] If $F$ is orientable, then $X$ admits a smoothing with $G(\alpha)=g'$ if and only if $g'\ge g$.

 \item[b)] If $F$ is nonorientable, $g'\ge g$ and $g'\equiv g$ mod 2, then $X$ admits a smoothing for which $G(\alpha)=g'-1$. If $g'<g$, then no such smoothing exists.
\end{itemize}
These smoothings have Stein--Casson type for some orientation on $X$, unless $g'=g$ and $X$ is a bundle over $F=\R P^2$ with Euler number $e(X)=0$ or over $S^2$ with $|e(X)|\le 1$. There are such Stein--Casson smoothings for each orientation of $X$ whenever $|e(X)|\le -\chi(F)$, where $\chi$ denotes the Euler characteristic.
\end{thm}

\noindent The case of orientable $F$ is treated from a somewhat different viewpoint in \cite[Example~6.1(b)]{JSG}. For nonorientable $F$, one can similarly analyze the minimal genus of the generator of  ${\rm H}_2(X;\Z_2)$, for nonorientable surfaces whose $w_1$ pulls back from that of $F$, by reducing to the orientable case in $\widetilde X$ \cite{MinGen}.

\begin{proof} It suffices to realize the required smoothings, with the negative results following since a map from an orientable surface to one with larger genus must have degree 0. We start by building a model family of Stein surfaces homeomorphic to $\R^2$--bundles. The cotangent bundle $T^*F$ of $F$ has oriented total space and Euler number $e(T^*F)=-\chi(F)$. We can construct $T^*F$ as a Stein surface by complexifying the real-analytic manifold $F$, or by explicitly drawing a link diagram as in \cite{Ann}. Let $V_k$ be the Stein surface obtained by performing $k$ positive self-plumbings in the 2--handle of $T^*F$. The generator of ${\rm H}_2(V_k)$ (with $\Z_2$ coefficients if $F$ is nonorientable) is represented by a smoothly embedded surface $F_k$ diffeomorphic to $F\# kT^2$, obtained by smoothing the double points of the immersed copy of $F$. Then $F_k\cdot F_k=e(T^*F)+2k$, since a local model of each double point exhibits two intersections between a pair of copies of $F_k$. (This is basically the framing correction for the newly created kinky handle.) Let $U_{g,n,k}^\pm$ (where $n=e(T^*F)+2k$ and $+$ denotes the case of orientable $F$) be the Stein surface obtained from $V_k$ by adding $k$ Casson handles, with a single double point (positive) at each stage, to convert the kinky handle of $V_k$ into a Casson handle. This is homeomorphic to the $\R^2$--bundle over $F$ with Euler number $n$, and is defined for all $g,k\ge 0$ (except $g=0$ for $-$) and $n-2k=e(T^*F)=2g-2$ (for $+$) or $g-2$ (for $-$). We can then extend the notation to include Stein surfaces with all smaller values of the Euler number $n$ by removing the Casson handle and reattaching it with left twists added to the framing. When $F$ is orientable and $n$ is maximal, the adjunction inequality shows that $F_k\subset U_{g,n,k}^+$ has minimal genus in its homology class, and $c_1=0$. For smaller $n$, we control the signs of the rotation number corrections so that $F_k\cdot F_k+|\langle c_1,F_k\rangle|$ is independent of $n$, and $F_k\subset U_{g,n,k}^+$ still has minimal genus. In the nonorientable case, the orientable double covering of $F$ determines a double covering of $U_{g,n,k}^-$ by $U_{g-1,2n,2k}^+$, since the two lifts of the Casson handle can be combined into a single Casson handle with $2k$ positive double points at the first stage. The smooth surface $F_k$ in $U_{g,n,k}^-$ lifts to the corresponding orientable surface with minimal genus.

Now given $X$ as in the theorem, orient it to fix the sign of $e(X)$. Then $X$ is orientation-preserving homeomorphic to each $U_{g,n,k}^\pm$ for the given sign, with $g$ the genus of $F$ and $n=e(X)$. These are defined for all $k\ge 0$, provided that $e(X)\le 2g-2$ (resp. $g-2$). Furthermore, if $X$ is oriented so that $e(X)\le 0$, these are defined except when $k=0$ and $X$ is one of the specified exceptions. In the orientable case, $U_{g,n,k}^+$ pulls back to a Stein--Casson smoothing on $X$ with $G(\alpha)=g+k$, for each $k\ge 0$ except for the exceptional cases where $k=g=0$, realized by the standard smoothing. The theorem follows immediately for orientable $F$. When $F$ is nonorientable, $U_{g,n,k}^-$ similarly induces a smoothing, double covered by $U_{g-1,2n,2k}^+$.
\end{proof}

\section{Minimal genus at infinity}\label{Ends}

Before defining the genus function at infinity, we briefly review the theory of ends of manifolds with boundary, eg \cite{HR}. Informally, we explore the behavior of a topological manifold $X$ at infinity by considering the complements of successively larger compact subsets. More precisely, let $\{K_i|i\in\Z^+\}$ be an exhaustion of $X$ by compact subsets, meaning that $K_i\subset\int K_{i+1}$ for each $i$ and $X=\bigcup_{i=1}^\infty K_i$. Consider the {\em neighborhood system of infinity} $\{X-K_i\}$.

\begin{de} The {\em space of ends} of $X$ is $\E (X)=\limin \pi_0(X-K_i)$.
\end{de}

\noindent That is, an end $\epsilon\in\E (X)$ is given by a sequence $U_1\supset U_2\supset U_3\supset\cdots$, where each $U_i$ is a component of $X-K_i$. If we use a different exhaustion of $X$, the resulting space $\E (X)$ will be canonically equivalent to the original: The set is preserved when we pass to a subsequence, but any two exhaustions have interleaved subsequences. An equivalent definition of $\E(X)$ is as the set of equivalence classes of {\em rays}, proper maps $[0,\infty)\to X$, where we call two rays equivalent if their restrictions to $\Z^+$ are properly homotopic. A {\em neighborhood} of the end $\epsilon$ is an open subset of $X$ containing one of the subsets $U_i$. This notion allows us to topologize the set $X\cup\E(X)$ so that $X$ is homeomorphically embedded as a dense open subset and $\E(X)$ is totally disconnected \cite{Fr}. The resulting space is Hausdorff with a countable basis. When $X$ has only finitely many components, this space is compact, %flag each component of $X-K_i$ without a finite subcover. If the resulting graph is infinite, it gives an end not covered; otherwise have a finite subcover.
and called the {\em Freudenthal} or {\em end compactification} of $X$. In this case, $\E(X)$ is homeomorphic to a closed subset of a Cantor set. As a simple example, one can realize many homeomorphism types of end-spaces, from a single point to a Cantor set, starting with a fixed countable collection of closed manifolds (even just 2-spheres) and connected-summing them via various trees. A proper topological embedding $M\times[0,\infty)\to X$, for some closed, connected manifold $M$ with codimension 1 in $X$, determines an end of $X$, which is {\em topologically collared} by the embedding. Equivalently, a collared end is obtained from a manifold by removing a boundary component identified with $M$. Clearly, not all ends can be collared, but those that can are a good test case for our invariants.

The genus function at infinity has domain given by a naive attempt at defining homology at infinity. Given a manifold $X$ with an exhaustion $\{K_i\}$ by compact subsets, fix $k\in\Z^{\ge0}$ and consider the inverse limit ${\rm H}^\leftarrow_k(X)=\lim_\leftarrow {\rm H}_k(X-K_i)$ induced by inclusion. Each $\alpha\in{\rm H}^\leftarrow_k(X)$ is a sequence of elements $\alpha_i\in {\rm H}_k(X-K_i)$ that are mapped to each other by the corresponding inclusions. Clearly, ${\rm H}^\leftarrow_k(X)$ is independent of the choice of sequence of compact subsets. If an end of $X$ has a neighborhood collared by some $M$, then it determines a direct summand of ${\rm H}^\leftarrow_k(X)$ canonically isomorphic to ${\rm H}_k(M)$, and for different collared ends (possibly infinitely many), these summands are independent. When $X$ is a 4--manifold and $k=2$, the elements $\alpha_i$ representing a given $\alpha\in\Hinf(X)$ always have $\alpha_i\cdot\alpha_i=0$. This is because $\alpha_i\in {\rm H}_2(X-K_i)$ is represented by a surface contained in some $K_j$, but equals the image of $\alpha_j\in {\rm H}_2(X-K_j)$. We obtain more useful information from the genus function:

\begin{de} The {\em genus function at infinity} for a smooth 4--manifold $X$ is  the function $G_\infty\co\Hinf(X)\to \Z^{\ge 0}\cup\{\infty\}$  for which $G_\infty(\alpha)$ is the limit of the nondecreasing sequence of minimal genera $G(\alpha_i)$ in $X-K_i$.
\end{de}

\noindent That is, each $\alpha\in\Hinf(X)$ is represented by a sequence of homologous oriented surfaces avoiding successively larger compact subsets of $X$, and $G_\infty(\alpha)$ is the minimal possible limit of genera of such a sequence. This is clearly independent of the choice of exhaustion. We can now talk about the {\em genus--rank function at infinity} and {\em genus filtration at infinity} by analogy with Definition~\ref{genus filtration}, or discuss these for a single end. Unlike $G$, the function $G_\infty$ is subadditive, since we can add classes using disjoint representative surfaces.  In particular, the classes with finite $G_\infty$ form a subgroup.

\begin{Remarks}\label{H3} (a) Our naive end homology ${\rm H}^\leftarrow_k(X)$ is a quotient of the usual end homology, with kernel given by the derived limit $\lim^1_\leftarrow {\rm H}_{k+1}(X-\int K_i)$ \cite[Proposition~2.6]{L}; see also \cite{HR}.

\item[(b)] One can define an analog of $G_\infty$ using ${\rm H}^\leftarrow_3$ in place of $\Hinf$ and minimizing the first Betti numbers of the resulting sequences of 3--manifolds. This was discussed from a different viewpoint in \cite{BG} in the case where $X=R$ is an exotic $\R^4$, using the generator of ${\rm H}^\leftarrow_3(R)\cong\Z$. This {\em engulfing index}, denoted $e(R)$, is bounded below by Taylor's invariant. For large exotic $\R^4$'s (those with a compact subset $K$ not smoothly embedding in $S^4$) Taylor's invariant is frequently nontrivial, but it seems a good conjecture that  $e(R)$ is always infinite in this case (except possibly for punctured exotic 4--spheres). For small exotic $\R^4$'s (those that are not large), there are examples with $e(R)\le 1$ \cite{BG}, but Taylor's invariant always vanishes and there are no known lower bounds on $e(R)$.
\end{Remarks}

We first analyze $G_\infty$ on an end $\epsilon$ of $X$ topologically collared by a closed 3--manifold $M$. We can also realize this $M$ as the boundary of some compact 2--handlebody $H_0$. If $C\subset H_0$ is the core 2--complex of $H_0$, then  $H_0-C$ is identified with the domain $M\times[0,\infty)$ of the collar. This product structure gives a  canonically embedded infinite sequence $H_0\supset H_1 \supset H_2\supset\cdots$ of identical handlebodies with parallel boundaries, whose common intersection is $C$. The open sets $\int H_i-C$ then comprise a neighborhood system of the end of $H_0-C$, which is identified with $\epsilon$. Let $\pi\co \widetilde H_0\to H_0$ be a finite covering. In our applications, this will usually be the identity (so that tildes can be ignored) or the orientable double covering, but other coverings can also be useful. The lifted handlebodies $\widetilde H_i=\pi^{-1}(H_i)$ are nested, with common intersection $\widetilde C_i=\pi^{-1}(C_i)$. Now choose a nested sequence of subhandlebodies $H^s_0\subset H^s_1\subset H^s_2\subset\cdots\subset H_0$, with $H^s_0$ empty  and $H^s_1$ containing all of the 1--handles of $H_0$. In the upcoming proof, we will take $H^s_i$ to be a subhandlebody of $H_i$ rather than $H_0$, but for our present homological discussion, we can think of all handlebodies $H_i$ as being the same. Let $\widetilde H^s_i=\pi^{-1}(H^s_i)$, and let $B_i\subset {\rm H}_2(\widetilde H_0)$ denote the image under inclusion of ${\rm H}_2(\widetilde H^s_i-\widetilde C)$. This is the span in ${\rm H}_2(\widetilde H_0)$ of all surfaces in $\partial\widetilde H_i=\pi^{-1}(M)$ that lie in $\widetilde H^s_i$. Suppose inclusion induces an injection $\iota\co{\rm H}_2(\partial\widetilde H_0)\to{\rm H}_2(\widetilde H_0)$. Then we can identify the groups $B_i$ as nested subgroups of ${\rm H}_2(\partial\widetilde H_0)=\Hinf(\widetilde H_0-\widetilde C)$. In particular, when $\pi=\id_{H_0}$, we have identified a filtration of the summand of $\Hinf(X)$ coming from $\epsilon$, and the end of any finite cover of $X$ is similarly accessible. If the manifold made by gluing together $X$ and $H_0$ admits a smoothing, our main Lemma~\ref{nest} modifies its restriction to $X$ so that the filtration $\{B_i\}$ becomes part of the genus filtration at infinity of $\epsilon$ (or its cover). This modified smoothing has a special form that will be useful: It is induced by a topological isotopy of $H_0-C$ rel boundary.

\begin{lem} \label{nest} Let $H_0$, $\pi$ and $\{ H^s_i\}$ be as above, with $\widetilde H_0$ oriented and $\iota$ injective. Then for any smooth structure $\Sigma$ on $\int H_0$, there is an (arbitrarily rapidly) increasing sequence $\{k_i\thinspace |\thinspace i>0\}$ of integers and a topological isotopy $\varphi_t$ of the inclusion map $\varphi_0\co H_0-C\to H_0$, rel a neighborhood of $\partial H_0$, so that in the pulled back smooth structure $\varphi_1^*\Sigma$, each $B_i$ with $i>0$ is the span in $\Hinf(\widetilde H_0-\widetilde C)$ of all classes $\alpha$ with $G_\infty(\alpha)\le k_i$.
\end{lem}

 \noindent Since $H_0$ is compact, it has only finitely many subhandlebodies. However, we take the index set of $\{ H^s_i\}$ to be $\Z^{\ge0}$. The filtration stabilizes at some $B_n\subset\Hinf(\widetilde H_0-\widetilde C)$ (whose rank or corank could be 0). The present purpose of the terms $k_i$ with $i\ge n$ is to imply that $B_n$ is the subgroup of all classes with finite $G_\infty$.

\begin{proof} To construct our isotopy, we will need Quinn's Handle Straightening Theorem  \cite[2.2.2]{Q}; see  \cite[Section~5]{JSG} for an exposition. Suppose $f\co D^k\times\R^{4-k}\to W$ is a homeomorphism from an open $k$--handle to a smooth manifold, restricting to a diffeomorphism of the boundaries. If $k=0,1$, Quinn's theorem states that $f$ is topologically isotopic, rel boundary and a neighborhood of the end, to a homeomorphism that is a local diffeomorphism in a neighborhood of the core $D^k\times\{0\}$. If $k=2$ this fails, but after such an isotopy we can assume, by \cite[Proposition~2.2.4]{Q} strengthened as in \cite[Theorem~5.2]{JSG}, that $f(D^2\times D^2)$ lies in a smoothly embedded Casson handle $CH\subset W$, and that $f|D^2\times D^2$ extends to a homeomorphism $D^2\times \R^2\to CH$. The Casson handle $CH$ will typically have many double points of both signs at each stage, but the construction is sufficiently flexible that we can replace the given $CH$ by any refinement of it without affecting the discussion. For example, if $f$ is initially the identity map of an open 2--handle, then any standardly embedded Casson handle fits into such a description. In general, we can obtain one further property.  We think of $CH$ as being obtained from its first stage $T$ by adding Casson handles comprising the higher stages and removing excess boundary. Since $T$ is a smooth regular neighborhood of an immersed disk, we can smoothly isotope it to a smaller neighborhood $T'\subset T$ of the disk, intersecting $\partial T$ only in its attaching region. Since each higher-stage Casson handle is itself homeomorphic to $D^2\times \R^2$, it contains a canonically embedded topological 2--handle $D^2\times D^2$ that, after a smooth isotopy, attaches to $T'$ inside $CH$. Then the union $h$ of $T'$ with these new 2--handles is itself a 2--handle inside $CH$, and $CH-\int h$ is a collar of the outer boundary of $h$, homeomorphic to $D^2\times (\R^2-\int D^2)$. Thus, $h$ is unknotted in $CH$, so it can be assumed to equal $f(D^2\times D^2)$. In conclusion, any $f\co D^2\times\R^2\to W$ as above is topologically isotopic, rel boundary and a neighborhood of the end (so ambiently), to a new homeomorphism sending $D^2\times D^2$ to a topological 2--handle $h$ inside a smoothly embedded Casson handle $CH\subset W$ that is an arbitrary refinement of some fixed one, and $h$ is obtained by smoothly shrinking the first stage of $CH$ and then adding topological 2--handles to it.

\begin{figure}
\labellist
\small\hair 2pt
\pinlabel $F$ at 23 90
\pinlabel $F'$ at 220 90
\pinlabel $h_1$ at 110 35
\pinlabel $h_2$ at 87 6
\pinlabel (a) at -2 4
\pinlabel 0 at 93 137
\pinlabel {$\widetilde H_i$} at -2 135
\pinlabel $h_1$ at 307 35
\pinlabel $h_2$ at 284 6
\pinlabel (b) at 195 4
\pinlabel 0 at 290 137
\pinlabel $CH$ at 358 83
\pinlabel 0 at 356 102
\endlabellist
\centering
\includegraphics{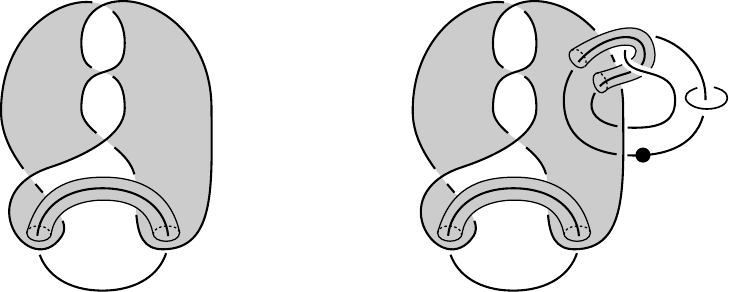}
\caption{The closed surface $F$ in (a) goes over the handle $h_1$ comprising $\widetilde H_i^s$ but must be disjoint from $h_2$ (which intersects $\widetilde C$). It is homologous in $\partial\widetilde H_i^s-\widetilde C$ to the surface $F'$ seen in (b) that lies in $\partial\widetilde T_i$ (realized by $h_1$ and the dotted circle).}
\label{fig}
\end{figure}

We will also need to locate smooth surfaces representing elements $\alpha\in B_i$ for each $i$. Any such element is carried by $\partial\widetilde H_i^s-\widetilde C$. Suppose that the smoothing on $\int H_i$ has Casson type. Let $T_i$ denote the manifold obtained from $H_i^s$ by replacing its 2--handles with the first-stage towers of the corresponding Casson handles. We can then recover $\int H_i^s$ with its Casson-type smoothing by adding the higher-stage Casson handles onto $T_i$ and removing the remaining boundary, and the same holds for the lift $\widetilde T_i$ to $\widetilde H_i^s$.  The class $\alpha$ is represented by some closed surface $F$ in $\partial\widetilde H_i^s$ that is disjoint from $\widetilde C$. Figure~\ref{fig}(a) shows an example with $\widetilde H_i$ made from 2--handles $h_1$ and $h_2$, where only $h_1$ lies in $\widetilde H_i^s$. The surface necessarily avoids $h_2$, whose core lies in $\widetilde C$. We obtain $\widetilde T_i$ from $\widetilde H_i^s$ by adding dotted circles that are Whitehead doubles of meridians of attaching circles. These puncture $F$, but only in algebraically canceling pairs, so the punctures can be repaired as in Figure~\ref{fig}(b). We obtain a surface $F'$ that is homologous to $F$ in $\partial\widetilde H_i^s-\widetilde C$. Since each higher-stage Casson handle $CH$ is attached to a meridian of a new dotted circle, it can be assumed to be disjoint from $F'$. Thus, $F'$ lies in $\partial\widetilde T_i$  avoiding both $\widetilde C$ and the higher-stage Casson handles, and represents $\alpha$ as required.

We construct the required isotopy $\varphi_t$ by perturbing the identity on $H_0$ to smooth the 0-- and 1--handles of $H_1$, then applying induction on the handlebodies $H_i$. For each $i>0$, we wish to ambiently isotope $H_i$ inside $H_{i-1}$ (or more precisely, inside the image of $H_{i-1}$ under the previous isotopy) so that $H_i$ becomes embedded in a larger copy $U_i$ of $\int H_i$ whose smooth structure inherited from $\Sigma$ as an open subset of $H_0$ arises from Lemma~\ref{main} for the subhandlebodies $H_1^s\subset H_2^s\subset\cdots\subset H_{i-1}^s\subset H_i$ and some sequence $\{k_j|1\le j<i\}$, with the required generating surfaces for each $A_j$ lying in $\widetilde T_j$. We also require each 2--handle of $H_i$ to sit inside the corresponding Casson handle as in the first paragraph of the proof. As an induction hypothesis, we assume the copy of $H_{i-1}^s$ in $H_i$ has already been isotoped into a suitable $U_{i-1}^s$ in this manner, as is vacuously true when $i=1$. Choose a finite generating set for $B_{i-1}$, represented by surfaces $F_r$ in $\partial\widetilde T_{i-1}\subset\widetilde H_{i-1}^s$ as constructed in the previous paragraph. The Handle Straightening Theorem isotopes $H_i$ rel $H_{i-1}^s$ to make its 2--handles suitably embedded in Casson handles so that $U_{i-1}^s$ extends to a suitable $U_i$ containing $H_i$. Since the new Casson handles could have been refined arbitrarily during the construction, we can choose them to arise from Lemma~\ref{main} for the sequence $H_1^s\subset\cdots\subset H_{i-1}^s\subset H_i$, using each previous $k_j$ (defined for $j<i-1$), and choosing $k_{i-1}$ larger than the genus of each $F_r$ and generating surface in $\widetilde T_{i-1}$ for $A_{i-1}$. (Addendum~\ref{mainadd} guarantees that we can do this without disturbing $U_{i-1}^s$, refining the originally embedded Casson-type smoothing. When $i=1$, this step merely achieves the first sentence of that addendum for $H_1$.) To restore the induction hypothesis, note that each 2--handle of $H_i^s\subset H_i$ is built from a copy of the first stage of its containing Casson handle by adding topological 2--handles. The smooth handles and kinky handles of $H_i^s$ comprise an embedding of $T_i$. Smoothly squeezing $T_i$ into its interior, and topologically squeezing the topological 2--handles, yields a subset that can be identified with the copy of $H_i^s$ in $H_{i+1}$ after we topologically isotope the latter. This version of $H_i^s$ is embedded in $U_i^s$ (obtained from $U_i$ by removing Casson handles) as required for the induction, with each $k_j$ continuing to work as required in $U_i^s$ since each $T_j$ was squeezed smoothly and inclusion induces an injection ${\rm H}_2(\widetilde H_i^s)\to {\rm H}_2(\widetilde H_i)$.

To complete the proof, note that we now have the required increasing sequence $\{k_i\}$, and a sequence of topological ambient isotopies that together comprise a family of homeomorphisms of $H_0$ rel a neighborhood of $\partial H_0$, parametrized by $[0,1)$ and beginning with $\id_{H_0}$. Every point in $H_0-C$ lies outside some $H_i$, so has a neighborhood on which the family is independent of $t$ sufficiently close to 1. Thus, there is an induced continuous family $\varphi_t\co H_0-C\to H_0$, for $0\le t\le1$, that is easily seen to be a topological isotopy (not ambient) of the inclusion map, fixing a neighborhood of $\partial H_0$. For each $i>0$, we exhibited $\varphi_1^*\Sigma$--smooth surfaces of genus at most $k_i$ in $\partial\widetilde T_i-\widetilde C$ generating $B_i$. Since the induction smoothly squeezes $\widetilde T_i$ into each subsequent stage, these surfaces have smoothly isotopic copies in each subspace $\widetilde H_m-\widetilde C$, $m\ge i$, determining classes in $\Hinf(\widetilde H_0-\widetilde C)$ with $G_\infty\le k_i$ generating $B_i$. To see that every $\alpha\in\Hinf(\widetilde H_0-\widetilde C)$ with $G_\infty(\alpha)\le k_i$ lies in $B_i$, note that $\alpha\cdot\alpha=0$, so the genus constraint implies that $\alpha$ maps to the subgroup $A_i={\rm H}_2(\widetilde H_i^s)$ of ${\rm H}_2(\widetilde H_0)$, as well as to the subgroup ${\rm H}_2(\partial\widetilde H_i)$ (by definition). The union of these two subspaces $\widetilde H_i^s$ and $\partial\widetilde H_i$ lies ${\rm H}_2$--injectively in $\widetilde H_0$ (since all 1--handles of $\widetilde H_i$ lie in $\widetilde H_i^s$), so the Mayer--Vietoris sequence of the pair shows that $\alpha$ pulls back to the intersection. Thus, $\alpha$ lies in $B_i$ as required.
\end{proof}

\begin{Remark}\label{nestadd} The proof gives more detailed information about the smoothing of the end: Each class in ${\rm H}_2(\widetilde H_i^s-\widetilde  C)$ is represented by a surface $F$ in $\partial\widetilde H_i^s-\widetilde C$ that extends via the canonical product structure on $\widetilde H_i^s-\widetilde C$ to a $\varphi_1^*\Sigma$--smooth, proper embedding $F\times [0,\infty)\to\widetilde H_i^s-\widetilde C$. These surfaces can be chosen before $k_i$ is defined.
\end{Remark}

While Lemma~\ref{nest} is powerful enough for most of our applications, it is also highly restricted, since more typical ends are not collarable. However, the above proof works in much more generality. Instead of requiring the handlebodies $H_i$ to be canonically nested copies of $H_0$, we can take any infinite topological nest of nonempty, compact 2--handlebodies intersecting in a compactum $C$, and use this to analyze manifolds with proper topological embeddings of $H_0-C$. Any such nest realizes a neighborhood of infinity in some manifold without boundary, for example, the double of $H_0$ with one copy of $C$ removed. We do not need the 2--handlebodies to be connected, so the end space may be a Cantor set. Similarly, the group $\Hinf (H_0-C)$ need not be finitely generated. The proof of the lemma requires the subhandlebodies $H_i^s\subset H_i$ to be stable in the sense that each $H_i^s$ has a subhandlebody that is a canonically embedded copy of $H_{i-1}^s$, and the remaining handles of $H_i$ respect the resulting product structure $\partial H_{i-1}^s\times I$. We introduce a finite covering $\pi$ as before with $\widetilde H_0$ oriented, and replace our map $\iota$ by the inclusion--induced map
 $$\iota_\infty\co \Hinf(\widetilde H_0-\widetilde C)=\limin {\rm H}_2(\widetilde H_i-\widetilde C)\to\limin {\rm H}_2(\widetilde H_i).$$
This no longer need be injective, provided that we work in $\im\iota_\infty$, or equivalently, in $\Hinf(\widetilde H_0-\widetilde C)$ modulo $\ker\iota_\infty$. We let $B_i=\iota_\infty({\rm H}_2(\widetilde H_i^s-\widetilde C))$. (These need not stabilize as in the collared case.) The requirement that all 1--handles lie in $H_1^s$ can be replaced by the weaker notion of {\em controlled instability}, the ${\rm H}_2$--injectivity condition asserting that the final Mayer--Vietoris argument works. We obtain the following lemma \cite{MinGen}:

\begin{lem} \label{nest2} For any smooth structure $\Sigma$ on $\int H_0$, there is an (arbitrarily rapidly) increasing sequence $\{k_i\thinspace |\thinspace i>0\}$ of integers and a topological isotopy $\varphi_t$ of the inclusion map $\varphi_0\co H_0-C\to H_0$, rel a neighborhood of $\partial H_0$, so that in the pulled back smooth structure $\varphi_1^*\Sigma$, each $B_i$ with $i>0$ is the span in $\im\iota_\infty$ of all classes $\alpha \in \Hinf(\widetilde H_0-\widetilde C)$ with $G_\infty(\alpha)\le k_i$.
\end{lem}

\noindent The proof is essentially the same as before, with added care surrounding the Mayer--Vietoris argument. (In Figure~\ref{fig}, $\widetilde C$ might now appear as, for example, a Bing continuum in the attaching region of $h_2$, with $F$ wrapped through it, but the proof still works.) A high-dimensional argument shows that the resulting smoothing  $\varphi_1^*\Sigma$ always lies in the stable isotopy class obtained by restricting the unique one on $H_0$.

In principle, Lemma~\ref{nest2} is much more powerful than Lemma~\ref{nest}, although examples for which it is required are necessarily somewhat complicated. The extra power of this lemma seems useful for attacking Question~\ref{Q} in full generality. We apply it to infinite 1--handlebody interiors in Theorem~\ref{1h}(b), to more general manifolds in Theorem~\ref{endsystemdiff}, and to collared ends with $\iota$ not injective in Theorem~\ref{M3iso}.

\begin{example}\label{mess}
Let $H_0$ be a handlebody on a framed link $L\subset\partial B^4$ whose linking pairing vanishes on one component $K$. Modify $L$ by leaving $K$ alone but using the satellite construction to insert a topologically slice link into a tubular neighborhood of each other component, respecting the framing. (For example, connected summing with topologically slice knots, Whitehead and Bing doubling, $(n,1)$--cabling and their ramified versions are special cases for the 0--framing.) The resulting handlebody $H_1$ topologically embeds in $H_0$ with a stable 2--handle on $K$. Continue by induction to get an infinite nest of 2--handlebodies, with $H^s_i$ the handlebody on $K$ for $i>1$, but empty for smaller $i$. Since there are no 1--handles, each inclusion $H_i-H_j\subset H_i$ is ${\rm H}_2$--injective, as is $\iota_\infty$, so $B_2\ne0$ but $B_1=0$. Thus, the lemma gives infinitely many diffeomorphism types of smoothings on $H_0-C$ (cf.\ Example~\ref{mess2}).
\end{example}

\section{Applications of minimal genera at infinity}\label{Apps}

We first consider a connected topological 4--manifold $X$ (possibly with boundary) with an end $\epsilon$ collared by a closed, connected 3--manifold $M$. We take the associated proper embedding to be inclusion of a closed subset $M\times[0,\infty)\subset X$. Let $X^*=X-M\times(1,\infty)$ be the result of replacing the end by a boundary component. We assume its Kirby--Siebenmann invariant $ks(X^*)$ vanishes. (This is automatic unless $X^*$ is compact.) Recall that ${\rm H}_2(M)$ is a direct summand of $\Hinf(X)$. We show that any filtration of  ${\rm H}_2(M)$ by direct summands lies in the genus filtration of that end for some smoothing of $X$, provided that $M$ is orientable. Otherwise, a similar statement holds for the orientable double cover $\widetilde M$ of $M$.

\begin{thm}\label{collar}
Suppose $X$ has a collared end as above, with $ks(X^*)=0$ if $X^*$ is compact. If $M$ is orientable, let $C_1\subset C_2\subset\cdots\subset C_n\subset {\rm H}_2(M)$ be a filtration by direct summands. Then there is a smoothing of $X$ and an arbitrarily rapidly increasing sequence $\{k_i\thinspace |\thinspace 1\le i<n\}$ such that $C_i$ is the span of all classes in ${\rm H}_2(M)$ with finite minimal genus at infinity (if $i=n$) or minimal genus at infinity at most $k_i$ (if $i<n$). If  ${\rm H}_2(M)\ne0$, then $X$ has infinitely many diffeomorphism types of smoothings distinguished by $G_\infty$. If $M$ is nonorientable, the same holds for any $\Z_2$--invariant filtration of ${\rm H}_2(\widetilde M)$ by direct summands  (where the smoothings of $X$ are distinguished by $G_\infty$ in the orientable double cover of $X$).
\end{thm}

\noindent It follows that $C_n$ is the set of all classes in ${\rm H}_2(M)$ with finite $G_\infty$. The hypotheses allow $C_n$ to be 0 or all of ${\rm H}_2(M)$.

\begin{proof} We begin with the case when $M$ is orientable and $X^*$ is smoothable. To exhibit $M$ as the boundary of a suitable handlebody, first choose a basis $\{\alpha_r\}$ for ${\rm H}_2(M)$ for which each $C_i$ is the span of some subcollection. For the corresponding dual basis in ${\rm H}_1(M)$ mod torsion, represent each element by an embedded circle. Add a 2--handle to $M\times I$ along each of these circles to obtain a cobordism $V$ from $M$ to a new orientable 3--manifold. This, in turn, bounds a compact 4--manifold $W$ consisting of a 0--handle and 2--handles. The handlebody $H_0=W\cup V$ bounded by $M$ also consists of a 0--handle and 2--handles. To apply Lemma~\ref{nest}, note that the inclusion--induced map $\iota\co {\rm H}_2(M)\to {\rm H}_2(H_0)$ is injective, since $H_0$ has no 1--handles and so is built from its boundary $M$ by adding handles of index $\ne 3$. For $1\le i\le n$, let $H_i^s$ be the subhandlebody of $H_0$ obtained from $W$ together with the 2--handles of $V$ constructed (upside down) from the duals of the basis for $C_i$. Then the subset of $\{\alpha_r\}$  carried by $H_i^s-C$ is precisely our basis for $C_i$, so $B_i=C_i$. (Every such basis element clearly lies in $B_i$. Conversely, every class in ${\rm H}_2(M)$ is uniquely a linear combination of classes $\alpha_r$. By injectivity of $\iota$, if any $\alpha_r\notin C_i$ appears, the class cannot lie in $B_i$.) For $i>n$, let $H_i^s=H_n^s$ so $B_i=C_n$. Lemma~\ref{nest}, applied to the standard smoothing of $\int H_0$, gives smoothings of $M\times(0,\infty)$ that we can assume are standard on $M\times(0,2)$. These fit together with the given smoothing on $X^*$ (by unique smoothing of 3--manifolds) to give the required smoothings of $X$.

When  ${\rm H}_2(M)\ne0$, it is easy to distinguish infinitely many diffeomorphism types if $X$ has only one end: Take $C_1=0$ but $C_2\ne 0$, and vary $k_1$. Otherwise, we need to rule out diffeomorphisms sending $\epsilon$ to a different end. If there is an end that is not collared, then we can we can smooth $\epsilon$ as above with $C_2\ne 0$, smooth all remaining $M$--collared ends as above with $C_n=0$, and then extend over the remaining noncompact manifold. Any diffeomorphism between two of the resulting smoothings must then preserve $\epsilon$, so $k_1$ distinguishes the smoothings as before. If all ends are collared, then the end space $\E(X)$ is discrete and hence finite. Fix an arbitrary smoothing on the noncompact manifold $X^*$, and extend this over $X$ as before. Each end of $X^*$ has a genus (possibly infinite) minimized over its nonzero homology classes. When the smoothings on $\epsilon$ are chosen so that $k_1$ exceeds all such finite minimal genera, then any diffeomorphism again preserves $\epsilon$ so that infinitely many smoothings are again distinguished on $X$.

We now sketch the remaining cases. (See \cite{MinGen} for details.) The proof for a nonorientable $M$ is similar, with $V$ constructed from $M\times I$ using pushed down circles arising from the filtration of ${\rm H}_2(\widetilde M)$. Since the handlebody $W$ capping the other boundary of $V$ is nonorientable, it must contain a 1--handle, so more work is required to arrange $\iota$ to be injective. Then $\Z_2$--invariance of the filtration guarantees that $B_i=C_i$, and the rest follows as before. In the final case, $X^*$ is unsmoothable, so it must be compact with $ks(X^*)=0$. Thus, by standard smoothing theory (see \cite{FQ}), we can smooth the connected sum $X^*\# mS^2\times S^2$ for sufficiently large $m$. For $H_0$ as before, the compact manifold $Y=(X^*\# mS^2\times S^2)\cup_\partial H_0$ then inherits a smooth structure that is standard on $H_0$. Each $S^2\times S^2$ summand determines a topologically embedded handlebody $h_r$ ($r=1,\dots,m$) in $Y$ with boundary $S^3$, consisting of a 0--handle and two 2--handles. Let $H'_0$ be the handlebody obtained from $H_0$ by ambiently attaching each $h_r$ along a 1--handle, and let $C'$ be its core. Then $Y-C'$ is homeomorphic to $X$, so it suffices to apply Lemma~\ref{nest} to $\int H'_0$, with smoothing $\Sigma$ inherited from $Y$ (exotic on each $\int h_r$), and all of the subhandlebodies enlarged to contain each $h_r$.
\end{proof}

\begin{Remark} This family of smoothings lies in a single stable isotopy class. We can choose this to be any stable class restricting to the standard one on each collar to which we applied Lemma~\ref{nest}. We can arrange to only use one collar unless every collared end of $X$ is homeomorphic to infinitely many others.
\end{Remark}

We can sometimes combine constraints on $G_\infty$ with those of the ordinary genus function $G$:

\begin{thm}\label{MxR}
 Let $M$ be a closed, connected 3--manifold, and let $\widetilde M$ denote $M$ (if orientable) or its orientable double cover. Let $\{C_i\}$ and $\{D_j\}$ be filtrations of ${\rm H}_2(\widetilde M)$ by direct summands, where the largest $C_i$ is allowed to be a proper summand, and both filtrations are required to be $\Z_2$--invariant in the nonorientable case. Then there is a smoothing of $M\times \R$ for which each $D_j$ is in the  genus filtration of $\widetilde M\times \R$, and $\{C_i\}$ is contained in the genus filtration of one end as in Theorem~\ref{collar}. The corresponding sequences of integers can be chosen to increase arbitrarily rapidly, with those for $\{D_j\}$ chosen first.
\end{thm}

Of course, the genus--rank function of $\widetilde M\times \R$ is a diffeomorphism invariant of the smooth structure on $M\times \R$, and the corresponding genus--rank function of one end is invariant under end-preserving diffeomorphisms. For example, we recover the result of Bi\v zaca and Etnyre \cite{BE} that every $M\times \R$ has infinitely many diffeomorphism types of smoothings, under the additional hypothesis that $b_2(\widetilde M)\ne 0$ (which could be relaxed by considering other covers, cf.\ Example~\ref{covers}). The smoothings constructed here are quite different from those of \cite{BE}, which are obtained by end-summing with an exotic $\R^4$, so contain a smoothly embedded copy of $M$ and have $G$ and $G_\infty$ bounded above by those of the standard smoothing (cf.\ Theorem~\ref{uncountable}). Many 3--manifolds have the property that every self-homotopy equivalence is homotopic to the identity. For such $M$, the entire genus function of $\widetilde M\times \R$ is a diffeomorphism invariant of the smooth structure on $M\times \R$, and the corresponding genus function at infinity is invariant under end-preserving diffeomorphisms.

\begin{proof} Construct a handlebody $H$ bounded by $M$ as in the previous proof, using the filtration $\{D_j\}$ (so $B_j=D_j$). The proof of Lemma~\ref{nest} (at a finite stage of the induction) gives a smooth structure $\Sigma_0$ on $\int H$ and an arbitrarily rapidly increasing finite sequence $\{k_j\}$ such that each $D_j\subset {\rm H}_2(\widetilde M)\subset {\rm H}_2(\widetilde H)$ is the span of all smooth surfaces in $\int\widetilde H$ with genus at most $ k_j$ and representing classes in ${\rm H}_2(\widetilde M)$: We constructed a finite spanning set of such surfaces $F_r$ in the complement of the core $\widetilde C$, and any such surface lies in $D_j$ by the final Mayer--Vietoris argument. Let $H'\subset H$ be a canonically embedded handlebody in a neighborhood of $C$ disjoint from the image of each $F_r$ for each $D_j$. We now adjust the end of $H-C$ inside $H'$: After sliding 2--handles of $H'$, we can assume it was constructed as in the previous proof, but using the filtration $\{C_i\}$. Lemma~\ref{nest} gives an isotopy $\varphi_t\co H-C\to H$ that is the identity outside $H'$, such that  $\{C_i\}$ behaves as required for the smooth structure $\varphi_1^*\Sigma_0$. Since the smooth structure has not been changed outside $H'$, each $D_j$ is still spanned by the previous surfaces $F_r$ of genus at most $ k_j$ in $\int\widetilde  H-\widetilde C=\widetilde M\times\R$. But in this new smoothing, every surface with genus at most $ k_j$ still pulls back from $\int\widetilde H$, representing a class in ${\rm H}_2(\widetilde M)$, so this class lies in $D_j$ (by our choice of $\Sigma_0$). Thus, $D_j$ is the span of all  $\pi^*\varphi_1^*\Sigma_0$--smooth surfaces with genus at most $ k_j$ in $\widetilde M\times\R$, as required.
\end{proof}

\begin{Remark}\label{MxRsum} This idea can be used whenever $X$, with its chosen ends capped by 2--handlebodies, embeds into some 2--handlebody. We can apply Lemma~\ref{main} to control the genus function of $X$, and separately control the genus functions of collared ends as above. As another variation, we can specify the genus filtration of each end of $M\times\R$ separately, while leaving $M\times\{0\}$ smoothly embedded.
\end{Remark}

Next we illustrate the reach of $G_\infty$ with some examples. We first consider 1--handlebody interiors. While the 2--homology, and hence the genus function, of any cover of these are trivial, we can distinguish infinitely many smoothings by the genus function at infinity. In the simplest case, we realize all possible values of $G_\infty$ on a generator. At the opposite extreme, infinite 1--handlebody interiors have noncollarable ends, but are accessible by Lemma~\ref{nest2}. The subsequent Example~\ref{covers} shows the utility of using other covers of an end, even if they are not defined on all of the 4--manifold.

\begin{thm}[a]\label{1h} Let $\alpha\in \Hinf(S^1\times\R^3)\cong\Z$ be a generator. Then every $k\in \Z^{\ge0}\cup\{\infty\}$ is realized as $G_\infty(\alpha)$ for some smooth structure on $S^1\times\R^3$.

\item[b)] For any 1--handlebody interior $X$ with ${\rm H}_1(X)\ne 0$, there are infinitely many diffeomorphism types of smooth structures distinguished by $G_\infty$ (on the orientable double cover if relevant). If ${\rm H}_1(X)$ is not finitely generated, then there are uncountably many.
\end{thm}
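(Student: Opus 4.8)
\medskip
\noindent\emph{Proof proposal.} The plan is to treat part~(a) by combining the machinery of Section~\ref{Ends} with an explicit Legendrian construction of the sort used for Theorem~\ref{bundle}, and part~(b) by applying Corollary~\ref{collar} and Corollary~\ref{endsystemdiff} to suitable nested $2$-systems.

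For part~(a), write $S^1\times\R^3=\int(S^1\times D^3)$, whose single end is collared by $(S^1\times S^2)\times\R$; then $\Hinf(S^1\times\R^3)\cong{\rm H}_2(S^1\times S^2)\cong\Z$, generated by $\alpha=[\mathrm{pt}\times S^2]$. The value $k=0$ is realized by the standard smoothing, in which $\alpha$ is carried by a smooth sphere at each level of infinity. For $k=\infty$, cap the end by $H_0=S^2\times D^2$ (one $0$-handle and a $2$-handle along a $0$-framed unknot, so $\partial H_0=S^1\times S^2$ and, $H_0$ having no $1$-handles, $\iota_\infty$ is injective), take the nested $2$-system of canonically embedded copies $H_i$ and the stable subsystem whose $H_i^s$ consist of the $0$-handle alone (controlled instability holds since the later $H_j$ have no $1$-handles). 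Since $X-\int H_0=S^1\times D^3$ is smooth, Theorem~\ref{endsystem} applies, giving a smoothing with $B_i=0$ and with the $k_i$ arbitrarily large; hence no nontrivial class of $\Hinf$ has finite minimal genus at infinity, so $G_\infty(\alpha)=\infty$.

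For finite $k\ge1$ I would build an explicit Stein model. By Theorem~\ref{ann}, applied to $S^2\times D^2$ with Chern class preassigned to evaluate $\pm(2k-2)$ on the generator $\beta$ of ${\rm H}_2$, there is a Stein surface $W_k$ orientation-preserving homeomorphic to $\int(S^2\times D^2)$; concretely one realizes the $0$-framed unknot by a Legendrian unknot with Thurston--Bennequin number $1-2k$ and rotation number $\pm(2k-2)$, replaces the $2$-handle by a first-stage kinky handle with $k$ positive self-plumbings (which produces exactly this rotation number and, after smoothing the double points, a genus-$k$ surface representing $\beta$), and completes to a Casson handle with one positive double point at each higher stage. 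Since $\beta^2=0$, Theorem~\ref{adj} gives $2g(F)-2\ge F\cdot F+|\langle c_1(W_k),[F]\rangle|=|m|(2k-2)$ for every (possibly disconnected) $F$ representing $m\beta$ with $m\ne0$, so $g(F)\ge k$, with equality for $m=\pm1$; thus $\beta$ has minimal genus exactly $k$ in $W_k$. Now transplant this structure onto the end of $S^1\times\R^3$: run the construction of Theorem~\ref{endsystem} and Lemma~\ref{nest} for $H_0=S^2\times D^2$ with the preassigned Stein--Casson smoothing on $\int H_0$ taken to be $W_k$ (allowed by Addendum~\ref{mainadd}), which forces $k_1=k$. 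Every neighborhood of infinity of the resulting smoothing then lies, after the defining isotopy, inside a copy of (a refinement of) $W_k$, so every surface representing $\alpha$ near infinity represents the primitive class $\pm\beta$ in that Stein surface and has genus $\ge k$; conversely Addendum~\ref{nestadd}(b) provides a genus-$k$ representative properly embedded in arbitrarily small neighborhoods of infinity. Hence $G_\infty(\alpha)=k$. The delicate point is exactly this last step---ensuring the genus-$k$ surface of $W_k$ can be pushed out to infinity rather than remaining in the compact core---which is why one feeds $W_k$ through Lemma~\ref{main} and invokes Addendum~\ref{nestadd}(b) instead of merely gluing $W_k$ onto $S^1\times D^3$.

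For part~(b), first suppose ${\rm H}_1(X)$ is finitely generated. Then $X=\int H$ for a finite handlebody $H$ with all indices $\le1$, $X$ has a single end collared by $M\times\R$ with $M=\partial H$, and, writing $\widetilde M$ for $M$ or its orientable double cover, the surjection ${\rm H}_1(\partial H)\to{\rm H}_1(H)$ and a transfer argument give $b_1(\widetilde M)\ge b_1(M)\ge b_1(X)\ge1$, so ${\rm H}_2(\widetilde M)\ne0$; Corollary~\ref{collar} then yields infinitely many diffeomorphism types distinguished by $G_\infty$. When ${\rm H}_1(X)$ is not finitely generated, passing if necessary to the orientable double cover keeps $b_1$ infinite, so infinitely many independent $1$-handles accumulate at one or more ends of $X$ (a $1$-handlebody being locally finite); removing a compact neighborhood of the core graph beyond more and more of these loops exhibits ${\rm H}_2$ of the corresponding neighborhoods of infinity with rank tending to infinity, i.e.\ $\Hinf(X)\ne0$ with growing ranks. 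I would encode this by a nested $2$-system $\H$ properly embedded as a neighborhood of those ends, whose cap is obtained by attaching $2$-handles along meridians of the loops---its intersection $C$ being in general a wild continuum of the Whitehead/Bing/solenoid type anticipated in Section~\ref{Ends}---with a stable subsystem absorbing one more $2$-handle at each stage, so that $\iota_\infty$ is injective, controlled instability holds (the cap has no $1$-handles), and $\lim_{i\to\infty}\Rk B_i=\infty$. Corollary~\ref{endsystemdiff} then produces uncountably many (continuum-many) diffeomorphism types distinguished by $G_\infty$. The main obstacle here is the point-set construction of this nested $2$-system inside the uncollarable $X$---matching the end structure of the infinite $1$-handlebody by a compact cap with a wild intersection while verifying the three bulleted conditions of Corollary~\ref{endsystemdiff}---rather than anything in the subsequent application of the lemmas.
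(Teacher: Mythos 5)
Your arguments for part (a) and for the finitely generated case of part (b) follow the paper's route: cap the end of $S^1\times\R^3$ by $H_0=S^2\times D^2$, realize $k=\infty$ by taking the stable subhandlebodies trivial, and for finite $k$ feed a Stein model (your $W_k$ is precisely the paper's $U^+_{0,0,k}$) into Lemma~\ref{nest}; in part (b), reduce the finitely generated case to Corollary~\ref{collar} via $M=\partial H$. One imprecision in (a): Addendum~\ref{nestadd}(b) as stated only supplies spanning surfaces of genus $\le k_i$, with $k_i$ arbitrarily large, so it does not by itself deliver the upper bound $G_\infty(\alpha)\le k$. The paper closes this by \emph{sharpening the proof} of Lemma~\ref{nest}: with $U_1=U^+_{0,0,k}$, the surface $F'$ produced inside $\partial T_i$ is obtained from the level $S^2$ by replacing the $2$-handle core disk with the smoothed first-stage immersed core, which has exactly $k$ positive double points, so $g(F')=k$ at every stage $i$; these squeezed-out copies of $F'$ are the genus-$k$ representatives of $\alpha$ near infinity. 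That extra step, tracking the genus of $F'$ through the induction rather than quoting the addendum's statement, is what pins down $G_\infty(\alpha)=k$.

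The genuine gap is in part (b) when ${\rm H}_1(X)$ is not finitely generated, and you flag it yourself: you never construct the nested $2$-system $\H$ properly embedded in $X$ with injective $\iota_\infty$, controlled instability, and $\Rk B_i\to\infty$. Abstractly ``attaching $2$-handles along meridians'' does not obviously yield a sequence of compact caps $H_0\supset H_1\supset\cdots$ that are \emph{tamely} nested with a workable stable subsystem, precisely because there is no ambient manifold to tame the successive gluings, and without that Corollary~\ref{endsystemdiff} has nothing to act on. The paper's resolution is to embed $X$ ambiently in $S^4$ (or $\R P^4$ in the nonorientable case) by attaching the $1$-handles of $X$ one at a time in standard position, and to take $H_i=S^4-\int K_i$ for the nested exhaustion $\{K_i\}$ of $X$ by finite subhandlebodies. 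Each $H_i$ is then a boundary sum of copies of $S^2\times D^2$ (plus one $D^2$-bundle over $\R P^2$ in the nonorientable case), tame nesting $H_i\subset H_{i-1}$ is automatic from the ambient picture, $C=\bigcap H_i=S^4-X$ is the wild compactum you anticipated, and routing the later $1$-handles of $X$ to avoid chosen $2$-handles of the earlier $H_i$ forces the maximal $\H$-stable subhandlebodies to have unbounded $b_2$---exactly the hypothesis under which Corollary~\ref{endsystemdiff} yields the cardinality of the continuum. Without this ambient construction the point-set obstacle you identify remains, so your proof of (b) in the infinitely generated case is incomplete.
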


\begin{proof} For (a), embed $S^1\times\R^3$ in $S^4$ as the complement of the standard $S^2$. The latter is an intersection of canonically nested handlebodies $H_i$ diffeomorphic to $S^2\times D^2$, each consisting of a 0--handle and 2--handle. The inclusion $\iota$ is an isomorphism, so letting each $H_i^s$ be empty in Lemma~\ref{nest} immediately realizes $k=\infty$. For finite $k>0$, we let $H_i^s$ be $H_i$ for each $i>0$, and sharpen the proof of Lemma~\ref{nest}. Let $U_1=U^+_{0,0,k}$ as in the proof Theorem~\ref{bundle} be standardly embedded in $H_0$. As in the proof of the lemma, we can assume the 2--handle of each $H_i$ is made by attaching topological 2--handles to a narrower version of the first stage of $U_1$, which has $k$ double points. By construction, the surface $F'$ from the proof of Lemma~\ref{nest}, representing $\alpha$ in ${\rm H}_2(H_i-S^2)$, has genus $k$ for each $i$. Since this is the minimal genus of the generator of $U^+_{0,0,k}$, the resulting topological isotopy completes the proof of (a).

For (b), it is routine to reduce to the connected case. Then for ${\rm H}_1(X)$ finitely generated, Theorem~\ref{collar} completes the proof. Otherwise, we must first find a suitable nest of 2--handlebodies realizing the end. Write $X$ as a nested union of compact, connected 1--handlebodies, each canonically embedded in a subhandlebody of the next. We can then inductively embed $X$ in $S^4$ or (if nonorientable) in $\R P^4$, by attaching the 1--handles of $X$ ambiently. Each complementary region $H_i$ is diffeomorphic to a boundary sum of copies of $S^2\times D^2$, and one disk bundle over $\R P^2$ in the nonorientable case. These admit 2--handlebody structures, and by arranging the successive 1--handles from $X$ to avoid chosen 2--handles of $H_i$, we can construct subhandlebodies $H^s_i$ so that ${\rm H}_2(H_i^s)$ increases without bound. Lemma~\ref{nest2} now controls infinitely many characteristic genera at infinity, so the result follows by careful bookkeeping.
\end{proof}

By construction, these smooth manifolds all embed as open subsets of $S^4$ (if orientable) or $\R P^4$, so have vanishing Taylor invariant. As is typical, we have great flexibility in controlling the genus filtration and genus--rank function at infinity for the above smoothings of $X$, as well as their diffeomorphism groups. Note that Part (a) generalizes to the complement in any smooth, closed 4--manifold of a smooth 2--sphere with trivial normal bundle.

\begin{example}\label{covers} It can be useful to consider covers other than an orientable double cover. For example, let $H_0$ be orientable and made from a single handle of each index 0, 1 and 2, with $\pi_1(H_0)\cong \Z_n$ for $n>1$. Let $\widetilde H_0$ be its universal cover, which is homotopy equivalent to a wedge of $n-1$ spheres. Then $\partial H_0$ is a rational homology 3--sphere, but $\partial \widetilde H_0$ can be arranged not to be, with $\iota$ injective. (In the simplest case, $\widetilde H_0$ is a boundary sum of $n-1$ copies of $S^2\times D^2$.) We know $\partial H_0$ also bounds a smooth, simply connected 4--manifold, whose interior $X$ has trivial $G_\infty$ and no nontrivial covers. However, Lemma~\ref{nest} still gives infinitely many nondiffeomorphic smoothings of $X$, distinguished by $G_\infty$ on the cover $\partial \widetilde H_0\times \R$ of a neighborhood of infinity in $X$. (Note that $\partial H_0$ has only finitely many $\Z_n$--covers.)
\end{example}

Theorem~\ref{collar} can be extended to more general ends by using Lemma~\ref{nest2}. We again start with a topological 4--manifold $X$, but this time assume there is a proper embedding $H_0-C\to X$ for a nest of 2--handlebodies as in Lemma~\ref{nest2}. This picks out a nonempty subset of the end space $\E(X)$ (that is open, closed and possibly uncountable) to take the place of the collared end $\epsilon$ in Theorem~\ref{collar}. Fix a finite covering $\pi\co \widetilde H_0\to H_0$ with $\widetilde H_0$ orientable. If this happens to extend over $X$, as for the identity or orientable double cover, then we can identify $\Hinf(\widetilde H_0-\widetilde C)$ with a direct summand of $\Hinf(\widetilde X)$. Suppose we have a stable collection of subhandlebodies $H_i^s\subset H_i$ with controlled instability for $\pi$, and that $ks(X-\int H_0)=0$. Then there is a smoothing on $X$ and an arbitrarily rapidly increasing sequence $\{k_i\}$ for which each $B_i$ satisfies the conclusion of Lemma~\ref{nest2} in $\Hinf(\widetilde H_0-\widetilde C)$. (This is clear if $X-\int H_0$ is smoothable, and follows as in the last paragraph of the proof of Theorem~\ref{collar} otherwise.) One application is to distinguish infinitely many smooth structures on a much larger class of manifolds \cite{MinGen}:

\begin{thm}\label{endsystemdiff}
Let $X$ be a connected topological 4--manifold (possibly with boundary). Let $\pi\co\widetilde X\to X$ be the identity on $X$ if the latter is orientable, and its orientable double covering otherwise. Suppose that there is a proper topological embedding $H_0-C\to X$ as above, with $\pi$ extending over $H_0$ so that $\iota_\infty$ is injective, and with controlled instability relative to a collection $\{H_i^s\}$ with some $B_i\ne0$. If $X$ has just one end, assume that $ks(X-\int H_0)=0$. Then $G_\infty$ on $\widetilde X$ distinguishes infinitely many diffeomorphism types of smooth structures on $X$, and uncountably many if $\lim_{i\to\infty}\Rk B_i$ is infinite. The same holds for nonorientable $X$ with $\pi=\id_X$, provided that $H_0$ is orientable.
\end{thm}

\noindent The main issue in the proof is to control diffeomorphisms sending ends of $H_0-C$ into its complement. This is in the spirit of the second paragraph of the proof of Theorem~\ref{collar}, but more delicate. Note that when $X$ has more than one end, we can assume (after replacing $H_0$ by one component of some $H_i$) that some end lies outside $H_0$ so that $ks(X-\int H_0)=0$ trivially.

 \begin{example}\label{mess2}Let $X$ be a 4--manifold with $ks(X)=0$. Then for any topological embedding in $X$ of a nest of 2--handlebodies $H_i$ as in Example~\ref{mess}, we obtain infinitely many diffeomorphism types of smoothings on $X-C$, although it need not have a collared end.
\end{example}

\section{Topological submanifolds}\label{Submfd} %%%%%%%%%%%%%%%%%%%%%%%%%%%%

We now digress to illustrate how our theory can be applied to topological submanifolds of smooth 4--manifolds. We will see that minimal genera of neighborhoods of such submanifolds can often be flexibly adjusted by topological ambient isotopy. This leads to a notion of minimal genus for any subset of a smooth 4--manifold, distinguishing topologically ambiently isotopic embeddings of surfaces and 3--manifolds. The question immediately arises:\ distinguishing up to what notion of equivalence? We define an equivalence relation between subsets of smooth manifolds that is weak enough to overlook various local pathologies, but still conclude that the topological ambient isotopy class of a typical tame surface or 3--manifold contains many of these ``myopic equivalence'' classes. (Recall that a topologically embedded submanifold of $X$ is {\em tame} if it has a topological tubular neighborhood, ie if it is the 0--section of a vector bundle topologically embedded in $X$ with codimension 0.) 

We begin by examining topologically embedded 3--manifolds, then proceed to surfaces.

\begin{thm}\label{M3iso} Let $M^3$ be the boundary of a compact, connected, orientable 2--handlebody $H$ topologically embedded in a smooth 4--manifold $X$. Assume $M$ is tame in $X$. For the inclusion--induced map $\iota\co {\rm H}_2(M)\to {\rm H}_2(H)$, let $D_1\subset\cdots\subset D_n=\im \iota$ be a filtration by direct summands. Then there is an arbitrarily rapidly increasing sequence $\{k_i\thinspace |\thinspace 1\le i<n\}$ and a topological ambient isotopy of $X$ after which $M$ has arbitrarily small topological tubular neighborhoods $U$ in which each $\iota^{-1}D_i$ is the span of all smooth surfaces in $U$ with genus at most $ k_i$.
\end{thm}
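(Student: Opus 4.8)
The plan is to reduce the statement to Lemma~\ref{nest} and Addendum~\ref{nestadd}, applied to the canonical nested 2-system determined by $H$, and then to promote the topological isotopy of an embedding produced there into a genuine ambient isotopy of $X$. First I would unwind the topology. Since $M$ is tame and $H$ is a compact handlebody with all indices $\le 2$ and $\partial H=M$, the standard mapping cylinder structure identifies $H$ with the mapping cylinder of a surjection $M\to C$ onto its core 2-complex $C$, so that $H-C$ is identified with $M\times[0,\infty)$; the interior $\int H$ inherits a smooth structure $\Sigma$ from $X$. Let $\H=\{H_i\}$ be the canonical nested 2-system of canonically embedded copies of $H$ with $\bigcap_i H_i=C$; then $\Hinf(H_0-C)\cong{\rm H}_2(M)$ and $\iota_\infty$ is precisely the inclusion-induced map $\iota\co{\rm H}_2(M)\to{\rm H}_2(H)$. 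As $H$ is orientable we take the covering $\pi=\id$. Because each $D_i$ is a direct summand of $\im\iota$, each $\iota^{-1}D_i$ is a direct summand of the free abelian group ${\rm H}_2(M)$; choosing a handle decomposition of $H$ rel $M$ and performing handle slides among the 2-handles, I would arrange a stable subsystem $\{H_i^s\}$ with $H_i^s=H_n^s$ for $i\ge n$ and with $B_i=\im\bigl({\rm H}_2(H_i^s-C)\to{\rm H}_2(H)\bigr)=D_i$, and with controlled instability, exactly as in the bookkeeping of the proof of Corollary~\ref{collar} but now with a prescribed $H$. Finally I would record that $\ker\iota=\ker\iota_\infty$ is carried by 2-spheres, namely boundaries of cocores of 1-handles of $H$, which can be left in the standard part of the construction; this supplies representatives of genus $0$ for the $\ker\iota$ part of $\iota^{-1}D_i$ in every neighborhood considered below, so it suffices to control things modulo $\ker\iota$.

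I would then invoke Lemma~\ref{nest}. It produces an arbitrarily rapidly increasing sequence $\{k_i\}$ and a topological isotopy $\varphi_t$ of the inclusion $H_0-C\hookrightarrow H_0$, rel a neighborhood of $M$, for which $\varphi_1^*\Sigma$ has each $D_i$ equal to the span in $\im\iota$ of the classes of minimal genus at infinity $\le k_i$. By Addendum~\ref{nestadd}(b), before the $k_i$ are fixed one obtains finite families of product-like $\varphi_1^*\Sigma$-smooth surfaces $F\times[0,\infty)\subset H_i^s-C$ with $g(F)\le k_i$ whose classes span $D_i$. A simplification specific to this setting makes the self-intersection bookkeeping disappear: any surface in a tubular neighborhood $U\cong M\times\R$ of $M$ has a pushoff into a parallel level $M\times\{t\}$, so $F\cdot F=0$, and the conditions $|F\cdot F|\le k_i$ that pervade Lemmas~\ref{main} and~\ref{nest} are vacuous for the surfaces in play. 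Moreover the Stein surfaces supplied by Lemma~\ref{main} in the course of the construction contain, by refinement, every $\varphi_1^*\Sigma$-smooth surface in $H_i-C$ lying over a sufficiently deep level, so that the adjunction inequality of Theorem~\ref{adj} is available there; on it the reused rotation cocycles are large on the handles outside $H_i^s$, so a surface of genus $\le k_i$ over such a level must represent a class lying in $B_i=D_i$.

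The key observation for converting $\varphi$ into an ambient isotopy of $X$ is that, in the proof of Lemma~\ref{nest}, the family $\varphi_t$ is assembled by composing ambient isotopies of the successive handlebodies $H_{j-1}$, each rel a neighborhood of $M$, and that near any fixed $\partial H_i$ the family is eventually independent of $t$ (since $\partial H_i$ lies outside $H_{i+1}$). Hence for a fixed large $i$ there is $t_1<1$ with $\varphi_t=\varphi_{t_1}$ near $\partial H_i$ for all $t\ge t_1$, and $\varphi_{t_1}$ extends by the identity to an ambient isotopy of $X$. Composing it with the evident ambient isotopy of $X$ that pushes $M=\partial H_0$ along the collar $H_0-\int H_i$ onto $\partial H_i$ gives an ambient isotopy $\Phi$ of $X$ supported in $H_0$ with $\Phi_1(M)=\varphi_{t_1}(\partial H_i)=\varphi_1(\partial H_i)$, a copy of $M$ near $C$. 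As the cofinal family of tubular neighborhoods of $\Phi_1(M)$ I would take the images under $\varphi_{t_1}$ of the thin shells about $\partial H_i$, that is, neighborhoods homeomorphic to $M\times\R$ whose $\Sigma$-smooth structure is carried by a band $(M\times(T_0-\epsilon,T_0+\epsilon),\varphi_1^*\Sigma)$ at the level $T_0$ corresponding to $\partial H_i$, with $\epsilon\to0$. In such a band the product-like surfaces $F\times\{T_0\}$ of genus $\le k_i$ span the quotient by $\ker\iota$ of $\iota^{-1}D_i$, while any $\Sigma$-smooth surface of genus $\le k_i$ in the band, having self-intersection $0$ and lying over a deep level, embeds in one of the Stein surfaces and hence by Theorem~\ref{adj} cannot have a component on a handle outside $H_i^s$, so its class lies in $\iota^{-1}D_i$. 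Together with the genus $0$ sphere representatives of $\ker\iota$, this shows that each such $U$ realizes $\iota^{-1}D_i$ as the span of its $\Sigma$-smooth surfaces of genus $\le k_i$, and these $U$ shrink to $\Phi_1(M)$.

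The step I expect to be the main obstacle is the third paragraph: the passage from the non-ambient isotopy $\varphi$ of the embedding $H_0-C\hookrightarrow X$ — whose controlled behavior is concentrated at the $C$-end rather than near $\partial H_0$, and which does not extend over $C$ — to a bona fide ambient isotopy of $X$ that exhibits the control on arbitrarily small tubular neighborhoods of the displaced $M$. The delicate points are verifying that the construction stabilizes near a fixed level so that a finite stage suffices and can be spliced with a collar push; checking that restricting the controlled smoothing to a thin band still supports both the spanning surfaces (which are product-like, hence meet every level, and are chosen before the relevant $k_i$) and the adjunction lower bound (which survives because the relevant Stein surfaces of Lemma~\ref{main} are reused, as in its proof); and tracking $\ker\iota$ via the sphere representatives. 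A secondary technical point is the realization of the prescribed filtration of $\im\iota$ by a stable subsystem of the \emph{given} $H$, handled by handle calculus following the proof of Corollary~\ref{collar}.
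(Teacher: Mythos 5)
Your proposal follows essentially the paper's route --- the canonical nested 2-system for $H$ with $\pi=\id$, a stable subsystem realizing the filtration, Lemma~\ref{nest} plus Addendum~\ref{nestadd}, then promotion to an ambient isotopy by exploiting the eventual constancy of $\varphi_t$ near each $\partial H_i$ --- but there is a genuine gap in how you propose to realize the filtration by subhandlebodies. You want to arrange $B_i=D_i$, and by the Mayer--Vietoris computation you cite, $B_i={\rm H}_2(H_i^s)\cap\im\iota$. But ${\rm H}_2(H_i^s)$ is always a direct summand of ${\rm H}_2(H)$, whereas $D_i$ is only assumed to be a direct summand of $\im\iota$, which need not itself be a direct summand of ${\rm H}_2(H)$. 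So one cannot hope to set ${\rm H}_2(H_i^s)=D_i$ directly by handle slides, and the appeal to "the bookkeeping of Corollary~\ref{collar}" does not transfer: there the handlebody is being constructed rather than given, so one can attach new 2-handles dual to a chosen basis of ${\rm H}_2(M)$, a freedom unavailable here. The paper's fix is to replace each $D_i$ by its rational span $A_i\subset{\rm H}_2(H)$, which is a direct summand of the free group ${\rm H}_2(H)$; handle slides within the given $H$ arrange ${\rm H}_2(H_i^s)=A_i$, and one then checks $A_i\cap\im\iota=D_i$ using that $D_i$ is a direct summand of $\im\iota$. This rational-span step is exactly what makes handle slides suffice.

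A secondary discrepancy: you claim $\ker\iota$ is spanned by genus-$0$ boundary spheres of 1-handle cocores ``left in the standard part of the construction,'' whereas the paper uses Addendum~\ref{nestadd}(b) to produce a finite $\varphi_1^*\Sigma$-smooth spanning set for $\ker\iota=\im(\ker j)$ as proper product-like embeddings $F\times[0,\infty)\to H_1-C$, and then takes $k_1$ to exceed their genera. Your version is plausible, but the cocore boundary spheres lie in $M=\partial H_0$, which is only tame in $X$ and is precisely the locus being ambiently isotoped; one must argue that these (or parallel pushoffs) persist as smooth surfaces in the shrinking tubular neighborhoods of the displaced $M$. The paper's route sidesteps this by keeping the spanning surfaces in the interior where the Casson-type smoothing is explicitly controlled. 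Your self-intersection observation ($F\cdot F=0$ in an $M\times\R$ neighborhood, so the $|F\cdot F|\le k_i$ constraints are vacuous) is correct and implicit in the paper, and your conversion of $\varphi$ to an ambient isotopy matches the paper's terse appeal to the canonical product structure and Addendum~\ref{nestadd}(a) with $m=n$.
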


\noindent Such embeddings of 3--manifolds often arise in practice. Many constructions of closed, smooth and symplectic 4--manifolds involve gluing pieces together along smooth 3--manifolds, and frequently one piece fits the above description of $H$ with $\iota$ nonzero. For example, the preimage of a generic disk under a Lefschetz fibration with closed fibers always has such description.

\begin{proof} For each $i$, let $A_i$ be the rational span of $D_i$ in ${\rm H}_2(H)$, so that $A_1\subset\cdots\subset A_n\subset{\rm H}_2(H)$ is a filtration by direct summands, and $D_i=A_i\cap\im\iota$ for each $i$. Since ${\rm H}_2(H)$ is a direct summand in the 2--chain group of $H$, we can slide handles so that each $A_i$ is ${\rm H}_2(H_i^s)$ for some subhandlebody $H_i^s\subset H$ containing all 1--handles of $H$. We interpret these subhandlebodies as in Lemma~\ref{nest}. Since $H_1^s$ contains all of the 1--handles, the Mayer--Vietoris sequence for $H_i^s\cup M$ shows (as at the end of the proof of Lemma~\ref{nest}) that $B_i$, which by definition is the image of ${\rm H}_2(H_i^s\cap M)\to{\rm H}_2(H)$, equals $A_i\cap\im\iota=D_i$. The same sequence with $i=1$ shows that $\ker\iota$ is the image in ${\rm H}_2(M)$ of $\ker(j\co{\rm H}_2(H_1^s\cap M)\to{\rm H}_2(H_1^s))$. Apply Lemma~\ref{nest2} (or \ref{nest} if $\iota$ is injective) with $\Sigma$ induced by inclusion. Using Remark~\ref{nestadd}, we can represent a finite spanning set for $\ker\iota=\im(\ker j)$ by $\varphi_1^*\Sigma$--smooth, proper embeddings $F\times[0,\infty)\to H_1-C$ with $k_1$ exceeding their genera, and represent a similar set for each $D_i$ by such embeddings with $g(F)\le k_i$. Then each $\iota^{-1}D_i$ is spanned by surfaces (smooth in $X$) with genus at most $ k_i$, lying in $\varphi_1(\partial H_{n+1})$. Since the latter is ambiently isotopic to $M$ via the canonical product structure, the theorem follows for any $U$ enclosed by $\varphi_1(\partial H_n)$.
\end{proof}

\begin{thm}\label{Fiso}
In a smooth 4--manifold $X$, let $F$ be a compact, connected, tame surface (without boundary). Then there is an arbitrarily large integer $m$ and a topological ambient isotopy after which $F$ has arbitrarily small topological tubular neighborhoods for which $m$ is the minimal genus of the generator in the minimal cover of the neighborhoods for which the surface and 4--manifold are orientable. Alternatively, the isotopy can be chosen so that there is a tubular neighborhood system for which that minimal genus increases without bound. When $X$ is orientable and $F$ is originally smoothly embedded, one can realize any $m\ge g(F)$ if $F$ is orientable, and otherwise any $m\ge g(\widetilde F)$ with $m\equiv g(\widetilde F)$ mod 2 in the double cover. In each case, the resulting tame surface can be assumed almost smooth, ie smooth except at one point.
\end{thm}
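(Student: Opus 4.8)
The plan is to reduce the statement to producing, by a topological ambient isotopy of $X$ supported near $F$, one of the explicit model smoothings from the proof of Theorem~\ref{bundle} on a small tubular neighborhood of $F$, and then to read the minimal genus off of that proof. Fix a topological tubular neighborhood $N_0$ of the tame surface $F$; choosing a handle structure on $F$ with all indices $\le2$ exhibits $N_0$ as a topologically embedded $4$-dimensional handlebody with all indices $\le2$ whose core $2$-complex is exactly $F$ (the $0$-section of $\int N_0\to F$), and we let $N_0\supset N_1\supset\cdots$ be the standard sequence of shrunk neighborhoods with $\bigcap_i N_i=F$. Let $g=g(F)$ (or $g(\widetilde F)$, for $\widetilde F$ the orientable double cover, in the nonorientable case), and let $n$ be the Euler number of the topological normal bundle of $F$, so $\int N_0$ is homeomorphic to the $\R^2$-bundle over $F$ with Euler number $n$. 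For each allowable $k$ the model $U_{g,n,k}^{\pm}$ is homeomorphic to this bundle and, by the argument in the proof of Theorem~\ref{bundle}, has the generator of its $2$-homology represented with minimal genus exactly $g+k$: the lower bound from the adjunction inequality (Theorem~\ref{adj}) applied in the ambient Stein surface, the upper bound from the explicit surface $F_k\cong F\# kT^2$ obtained by smoothing double points, and the nonorientable case being handled through the $\Z_2$-equivariant double cover $U_{g-1,2n,2k}^{+}\to U_{g,n,k}^{-}$ as there. Thus it suffices to arrange, after a topological ambient isotopy of $X$, that some tubular neighborhood $N$ of $F$ (and hence arbitrarily small ones, by shrinking fibers) has $\int N$ diffeomorphic, as a smooth submanifold of $X$, to $U_{g,n,k}^{\pm}$ (or, when $n$ is too large for the given orientation, to $U_{g,-n,k}^{\pm}$ with $X$ reversed, which is harmless since minimal genus ignores the ambient orientation). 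Taking $k$ arbitrarily large gives the arbitrarily large $m=g+k$ of the first assertion; when $X$ is orientable and $F$ is originally smoothly embedded every $k\ge0$ is allowable (with $m=g$ realized trivially by the standard smoothing in the few degenerate cases where the model is not Stein), yielding the full range $m\ge g(F)$.

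To realize the model I would run the Quinn handle-straightening argument from the proof of Lemma~\ref{nest}. Applying Quinn's Handle Straightening Theorem to the topologically embedded handlebody $N_0$, topologically ambiently isotope $X$, rel the complement of a small neighborhood of $N_0$, so that the $0$- and $1$-handles of $N_0$ become smoothly embedded and each $2$-handle $h$ becomes a topological $2$-handle sitting inside a smoothly embedded Casson handle $CH\subset X$ which, as in that proof, may be taken to be an arbitrary refinement of the one Quinn supplies, with $h$ obtained by smoothly shrinking the first stage of $CH$ and adjoining topological $2$-handles. Take $CH$ to be a common refinement of Quinn's output and the Casson handle $CH^{\mathrm{mod}}$ of $U_{g,n,k}^{\pm}$; this is legitimate because both are attached along the same framed circle — the one for which the Casson handle is homeomorphic to a $2$-handle, which for $CH^{\mathrm{mod}}$ gives Euler number $n$, matching the framing inducing the normal bundle of $F$. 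Then the smooth submanifold $U\subset X$ formed by the straightened $0$- and $1$-handles together with these Casson handles (excess boundary removed) is a refinement of $U_{g,n,k}^{\pm}$ containing $\int N_0$, and $\int N_0$ differs from $U$ only by a half-open product collar of the end, so is diffeomorphic to it. Because $U$ embeds in $U_{g,n,k}^{\pm}$, the adjunction inequality still forces its generator to have minimal genus $\ge g+k$, while a subdisk of the first-stage core of $CH$ carrying only the $k$ double points inherited from $CH^{\mathrm{mod}}$, smoothed, represents the generator with genus $g+k$; hence $[F]\in{\rm H}_2(\int N_0)$ has minimal genus $g+k$, and similarly in each $\int N_i$. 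The orientability bookkeeping — passing to the orientable double cover of $N$ and using the $\Z_2$-equivariant double cover of the model — is exactly as in the proof of Theorem~\ref{bundle}.

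It remains to see that $F$ is almost smooth. After the isotopy, $F$ is the union of the smoothly embedded $0$- and $1$-handle cores with the core of each topological $2$-handle $h$, and the latter is obtained from the smooth (but immersed) core of the first stage of $CH$ by splitting its double points into the adjoined topological $2$-handles, so $F$ fails to be smooth only near one point for each such double point. To collapse this to a single point I would invoke the generalized towers of Remark~\ref{GCH} (see \cite{steintop}): allowing $\Self(D)>0$ and absorbing the genus into the genera of the intermediate surfaces along a single spine, rather than into many first-stage double points, produces exotic handles that are tame topological $2$-handles whose cores are smooth away from one point; alternatively one gathers the finitely many bad points into a single ball and uses uniqueness of cores of topological $2$-handles to replace that portion of $F$ by one smooth except at the cone point. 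Either way the bad point is a cone on an unknotted circle, so $F$ remains locally flat, hence tame. The step I expect to be the main obstacle is the realization in the second paragraph — steering Quinn straightening onto (a refinement of) the precise model $U_{g,n,k}^{\pm}$, matching framed attaching circles and verifying that passing to the common refinement leaves the minimal genus of the generator equal to $g+k$ — with the reduction of the non-smooth locus to a single point a close second.
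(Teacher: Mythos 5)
Your overall plan matches the paper's: use Quinn handle-straightening to place the $2$-handle of a tubular-neighborhood handlebody inside a Casson handle, steer the Casson handle onto (a refinement of) the model $U^\pm_{g,n,k}$ from the proof of Theorem~\ref{bundle}, and read the minimal genus off of the adjunction inequality. But there are two genuine gaps, both arising because you apply straightening once and stop, whereas the paper's proof runs the full iterated construction of Lemma~\ref{nest} to build a \emph{nested} 2-system inside $H_0$.

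First, your upper bound in the general tame case is wrong: the common refinement of Quinn's output with $CH^{\mathrm{mod}}$ has some $k'\geq k$ first-stage double points (of both signs, in general), and ``a subdisk of the first-stage core carrying only the $k$ double points inherited from $CH^{\mathrm{mod}}$, smoothed'' is not an embedded surface --- you cannot ignore the other $k'-k$ double points. So what you actually get is that the minimal genus lies in $[g+k, g+k']$ for some $k'$ you do not control, which still shows $m$ is arbitrarily large but does not give the exact value you claim. More seriously, the phrase ``and similarly in each $\int N_i$'' hides the real issue: the representative of genus $g+k$ (or $g+k'$) that you produce lives in one fixed neighborhood, but Definition~\ref{mgsubset} takes a supremum over shrinking neighborhoods, so you must exhibit bounded-genus representatives inside each $N_i$. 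That requires the nested family of canonically embedded $2$-handles $h'\cup A$ from the proof of Lemma~\ref{nest}, in which the boundary surfaces $F'\subset\partial T_i$ can be squeezed into all subsequent stages. A single straightening gives you no such control.

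Second, your almost-smoothness argument is either deferred or incomplete. Invoking Remark~\ref{GCH} and \cite{steintop} defers to a forthcoming paper, which is not a proof here; and ``gather the finitely many bad points into a single ball and use uniqueness of cores of topological $2$-handles'' does not by itself produce a disk smooth except at one point --- a tame topological disk in a ball need not be isotopic rel boundary to one that is smooth away from a cone point without further work. The paper's proof gets this for free from the iteration it already needs for the first issue: each stage replaces the bad part by a smaller topological $2$-handle $h'$ connected by a \emph{smooth} annulus $A$, the nested $h'$ are arranged to shrink to a single point $x$, and the limit surface is literally the union of the smooth $0$- and $1$-handle cores with the infinite chain of smooth annuli, one-point compactified at $x$. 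That construction, which you skip, is what both finishes the minimal-genus control across shrinking neighborhoods and yields the almost-smooth representative.
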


\begin{proof} Freedman's original paper \cite{F} shows that embedded core disks of Casson handles can be taken to be almost smooth, but rather than introduce his machinery, we import the key ideas into our present setup. For any smooth manifold $W$ homeomorphic to $D^2\times\R^2$, the proof of Lemma~\ref{nest} showed how to isotope so that $D^2\times D^2$ maps to a topological 2--handle $h$ inside a smooth Casson handle $CH$ in $W$, with $h$ obtained from the first stage $T$ of $CH$ (squeezed into $h$) by adding topological 2--handles $h_r$. We need to further improve the picture by a topological isotopy in $CH$ that is smooth near $T$. First, perform the same construction in each $h_r$ to get a smaller topological 2--handle $h'_r\subset CH_r\subset h_r$, with $h_r'$ topologically ambiently isotopic to $h_r$ rel its attaching circle. By compactness, $h'_r$ lies in some finite subtower $T_r$ of $CH_r$. Let $T^*\subset\int CH$ be obtained by shrinking $T$ away from the boundary of $W$, so that the attaching circles of $CH$ and $T^*$ are connected by an annulus in $CH-\int T^*$, and then attaching the towers $T_r$. Then $T^*$ is a Casson tower, so it is diffeomorphic to a tubular neighborhood of a wedge of circles. (The core of the tower can be collapsed from the bottom up.) Since $CH$ is simply connected, $T^*$ can be assumed after smooth isotopy to lie in a preassigned open subset $V$ of $CH$. Then $T$ is seen as an arbitrarily small, smooth, 1--stage tower in $\int CH$, connected to the attaching circle of $CH$ along a smooth annulus $A$ in $CH$. The 2--handles $h'_r$ also lie in $V$, disjointly from $A$, and attaching them to $T$ gives a topological 2--handle $h'\subset V$ which, when extended by a tubular neighborhood of $A$, becomes topologically ambiently isotopic to $h$, smoothly near $T$. In fact, up to a compactly supported topological isotopy of $\int W=\R^2\times \R^2$, $h'$ is a standardly embedded $D^2\times D^2$, connected to $\partial W$ by the standard unknotted annulus $A$.

Now for $F\subset X$ as given, let $H_0\subset X$ be a $D^2$--bundle neighborhood of $F$, realized as a handlebody with a single 2--handle. By the method of Lemma~\ref{nest}, we construct a canonical nest of topological 2--handlebodies in $H_0$. (Smooth the 0-- and 1--handles, then construct nested embeddings of the 2--handle inside  Casson handles.) In the minimal cover to which $F$ and $X$ lift orientably, we can either force the minimal genus of the generator to increase without bound (by omitting the 2--handle from each $H_i^s$), or preserve it as some arbitrarily large integer (by setting $H_2^s=H_2$). If $X$ is orientable and $F$ is smooth, then the proof of Theorem~\ref{bundle} gives the initial Casson-type smoothing $U_1$ for realizing a preassigned $m$ as specified above. In each case, we assume each 2--handle is determined by $h'\cup A$ as in the previous paragraph, inside the corresponding handle $h'$ of the previous stage, with the nested handles $h'$ intersecting in a single point $x$. Then the common intersection $C$ of the handlebodies is an almost-smooth surface topologically ambiently isotopic to $F$: The core disk of the original 2--handle $h$ of $H_0$ has been removed from $F$ and replaced by the infinite union of the consecutive smooth annuli, one-point compactified at $x$. Since each annulus is standard up to isotopy in the previous $h'$, we can consecutively isotope the handles $h'$ and annuli onto a standard model in $H_0$. The limiting isotopy extends continuously over $x$, isotoping $C$ onto $F$ as required.
\end{proof}

Clearly, these theorems smoothly distinguish infinitely many embedded submanifolds within a given topological ambient isotopy class, up to some equivalence relation. It is uninteresting that the germs of the embeddings are distinct, since germs are sensitive to tiny changes such as smoothing a corner. Instead, we define a much weaker equivalence relation, by generalizing \cite{kink}:

\begin{de} Two subsets $Z_i\subset X_i$ of smooth manifolds are {\em myopically related} if for any neighborhoods $U_i$ of $Z_i$ ($i=1,2$) there are smaller neighborhoods $V_i$ that are diffeomorphic to each other, with each inclusion $Z_i\subset V_i$ a homotopy equivalence. (Thus, we can't distinguish the subsets without glasses). Two subsets are {\em myopically equivalent} if they are connected by a finite sequence $Z_j\subset X_j$ of subsets for which consecutive pairs are myopically related.
 \end{de}

 \noindent We may also restrict how the diffeomorphisms behave on homology or orientations. However, we don't require the diffeomorphisms to preserve the subsets $Z_i$, which would imply that the germs are the same. We then lose the obvious proof of transitivity, but obtain (for those $Z_i$ having suitable neighborhoods for reflexivity, such as tame submanifolds) a much weaker equivalence relation: Any two continuous sections of a given smooth vector bundle are myopically related, and some PL locally knotted surfaces in 4--manifolds are myopically related to smooth surfaces \cite{MinGen}.

 \begin{de}\label{mgsubset} For any subset $Z\subset X^4$ and $\alpha\in {\rm H}_2(Z)$, the {\em minimal genus of $\alpha$ in $Z$} is the supremum in $\Z^{\ge0}\cup\{\infty\}$ of the minimal genera of the images of $\alpha$ in  neighborhoods of $Z$.
 \end{de}

\noindent This agrees with previous usage when $Z$ is itself a 4--manifold (allowing a smooth boundary), and is an invariant of myopic equivalences preserving $\alpha$. We use this invariant, in the appropriate cover if necessary, to reinterpret the previous two theorems:

 \begin{cor}\label{infiso} In a smooth 4--manifold $X$, every tame compact surface $F$, and every 3--manifold $M$ as in Theorem~\ref{M3iso} with $\iota\ne0$, is ambiently isotopic to other submanifolds representing infinitely many myopic equivalence classes. One can realize arbitrarily large integers and $\infty$ as the minimal genus of the generator of $F$ (or its suitable cover). If $X$ is orientable and $F$ is smooth, one can realize any integer larger than $g(\widetilde F)$, provided that its parity agrees with that of $g(\widetilde F)$ if $F$ is nonorientable. For $M$, if $\im\iota$ has rank at least $2$, there are (infinitely many) pairs of submanifolds ambiently isotopic to $M$ such that each has a topological tubular neighborhood in which no tubular neighborhood of the other embeds smoothly and injectively on ${\rm H}_2$.
 \end{cor}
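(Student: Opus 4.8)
The plan is to deduce everything from Theorems~\ref{Fiso} and~\ref{M3iso} together with the observation that the minimal genus of a class in a subset (Definition~\ref{mgsubset}), and hence the ``genus filtration of a subset'', $\Gamma_g^Z=$ the rational span of those $\alpha$ with minimal genus $\le g$ in $Z$, behaves well under myopic equivalence. If $Z_1\subset X_1$ and $Z_2\subset X_2$ are myopically related, then arbitrarily small neighborhoods contain smaller diffeomorphic neighborhoods $V_i$ with $Z_i\hookrightarrow V_i$ a homotopy equivalence; since a smooth embedding never raises minimal genus and a diffeomorphism preserves it, the resulting identification ${\rm H}_2(Z_1)\cong{\rm H}_2(V_1)\cong{\rm H}_2(V_2)\cong{\rm H}_2(Z_2)$ matches up minimal genera of classes, hence carries each $\Gamma_g^{Z_1}$ to $\Gamma_g^{Z_2}$; passing to equivalence, $\Rk\Gamma_g^Z$ is a myopic invariant, as is the minimal genus of a fixed class under equivalences preserving that class (and likewise in any prescribed cover). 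For the surface statements this finishes the proof: Theorem~\ref{Fiso} produces, by topological ambient isotopy from $F$, submanifolds realizing each prescribed value of this minimal genus of the generator (in the minimal cover orienting $F$ and $X$) --- every integer $\ge g(\widetilde F)$, of the correct parity in the nonorientable case, when $X$ is orientable and $F$ smooth, or arbitrarily large integers and $\infty$ in general, the last via a neighborhood system on which the minimal genus is unbounded --- and distinct values give distinct myopic classes.

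For a $3$-manifold $M$ as in Theorem~\ref{M3iso} with $\iota\ne0$, the controlled neighborhoods $U$ supplied by that theorem are cofinal among tubular neighborhoods of $M$, and $\Gamma_g(U)$ is nonincreasing as $U$ shrinks, so $\Gamma_g^Z=\bigcap_U\Gamma_g(U)$ and evaluation at the characteristic genera gives $\Rk\Gamma_{k_i}^Z=\Rk(\iota^{-1}D_i)$. Thus the genus-rank function of the submanifold produced by Theorem~\ref{M3iso} is pinned at the $k_i$ by the chosen filtration of $\im\iota$. Varying the filtration and the (arbitrarily rapidly increasing) sequence $\{k_i\}$ --- for example fixing a proper filtration and letting $k_1\to\infty$ --- yields topologically ambiently isotopic submanifolds with pairwise distinct genus-rank functions, hence infinitely many myopic equivalence classes.

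The asymmetric statement (when $\Rk\im\iota\ge2$) is the crux. I would build by Theorem~\ref{M3iso} two embeddings $M_1,M_2$ of $M$, each ambiently isotopic to the original, with controlled neighborhoods $U_1,U_2$ whose genus filtrations are ``incomparable''. Suppose some tubular neighborhood $V_2$ of $M_2$ admitted a smooth embedding into $U_1$ inducing an injection on ${\rm H}_2$. Replacing $V_2$ by a controlled sub-neighborhood from Theorem~\ref{M3iso} (legitimate by uniqueness of topological tubular neighborhoods, since such a sub-neighborhood carries the full ${\rm H}_2$), the composite would be a smooth, ${\rm H}_2$-injective embedding of a controlled neighborhood $U_2$ of $M_2$ into $U_1$. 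As smooth embeddings do not raise genus, $i_*$ carries $\Gamma_g(U_2)$ into $\Gamma_g(U_1)$ for every $g$; and ${\rm H}_2$-injectivity then forces $\Rk\Gamma_g(U_2)\le\Rk\Gamma_g(U_1)$ for all $g$, i.e.\ the genus-rank function of $U_2$ is pointwise dominated by that of $U_1$. To obstruct this, and symmetrically the embedding of a neighborhood of $M_1$ into $U_2$, it suffices to arrange that neither genus-rank function dominates the other. The hypothesis $\Rk\im\iota\ge2$ provides the room: splitting $\im\iota$, one chooses the two filtrations so that a class made cheap (small minimal genus) in $U_1$ is expensive in every neighborhood of $M_2$, and vice versa, and places the characteristic genera of the two constructions in an interleaved pattern. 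Iterating with ever-larger genera then gives infinitely many such pairs.

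The step I expect to be the main obstacle is exactly this incomparability: because the genus-rank function is nondecreasing, one must control not merely its values at the characteristic genera but also the lengths of the intermediate plateaus, so that each construction's plateau at an intermediate rank straddles the other construction's characteristic genus. The tool for this is the flexibility in Theorem~\ref{ann} exploited in the proof of Lemma~\ref{main}: by choosing the auxiliary Stein surfaces with arbitrarily large rotation numbers one forces every class outside a prescribed summand to have minimal genus not just $>k_i$ but as large as desired, so that the genus-rank function of a controlled neighborhood remains equal to $\Rk(\iota^{-1}D_i)$ throughout an explicitly bounded, arbitrarily long window of genera beyond $k_i$. Verifying that these windows can be made to interleave as needed --- together with the routine reductions to the connected, tame setting and the bookkeeping of torsion in ${\rm H}_2(M)$ --- is where the real work lies.
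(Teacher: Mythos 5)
Your proof is correct and takes essentially the same route as the paper's: minimal genus in a subset and the limiting genus-rank function $\bar\gamma$ are myopic invariants (this disposes of the surface assertions immediately via Theorem~\ref{Fiso}); a smooth $\rm H_2$-injective embedding $U\hookrightarrow V$ forces $\gamma_U\le\gamma_V$ pointwise; and the incomparability for $M$ is achieved by constructing $M_1$ and $M_2$ simultaneously with interleaved characteristic genera, using the freedom of Addendum~\ref{mainadd} to finish $M_1$ only after $M_2$ is built.

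One remark on the issue you correctly flag as the crux (controlling the length of the plateau of $\bar\gamma$ between characteristic genera, not just its values at them). You propose resolving this by reaching down to Theorem~\ref{ann} and choosing arbitrarily large rotation numbers. That works, but the paper handles it at the level of Theorem~\ref{M3iso}'s statement, without reopening the Stein construction: the filtration $D_1\subset\cdots\subset D_n=\im\iota$ is allowed to repeat terms, and if $D_i=D_{i+1}$ then $\Gamma_{k_i}=\Gamma_{k_{i+1}}=\iota^{-1}D_i$ forces (by monotonicity) $\Gamma_g=\iota^{-1}D_i$ for every $g\in[k_i,k_{i+1}]$, with $k_{i+1}$ arbitrarily large. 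This is exactly the ``explicitly bounded, arbitrarily long window'' you need, obtained by a squeeze rather than by re-estimating the adjunction inequality. (The same trick, in the form $0=D_1\ne D_2=D_3=\im\iota$, is how the paper gets the infinitely many myopic classes in the first place.) So your approach and the paper's are the same mechanism viewed at different levels; the filtration-with-repeats formulation is simply the tidier way to invoke it.
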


 An analog of the last sentence for surfaces is given in Corollary~\ref{infisoF}.

 \begin{proof} The assertions about $F$ are immediate. For $M$, note that a smooth embedding $U^4\to V^4$ that is injective on ${\rm H}_2$ gives a pointwise inequality $\gamma_U\le\gamma_V$ of genus--rank functions. Thus, a tame, compact submanifold inherits a limiting genus--rank function $\bar\gamma\co \Z^{\ge0}\to\Z^{\ge0}$ from a tubular neighborhood system (independent of the choice of system). This is an invariant of myopic equivalence, and distinguishes infinitely many classes of submanifolds ambiently isotopic to $M$, obtained by setting $n=3$ and $0=D_1\ne D_2=\im\iota$ in Theorem~\ref{M3iso}. It now suffices to prove the last sentence of the corollary, by finding two submanifolds $M_i$ ambiently isotopic to $M$, with functions $\bar\gamma$ not related by an inequality. We construct these simultaneously. Choose a nontrivial, proper direct summand $D$ of $\im\iota$. Start constructing $M_1$ to obtain $k$ for which $\iota^{-1}D$ will be the span of all surfaces of genus at most $ k$ (with an arbitrarily large lower bound on the genera of surfaces not in $\ker\iota$). Construct $M_2$ with a neighborhood in which all surfaces not in $\ker\iota$ have genus larger than $k$, and let $k'$ exceed the genera of a spanning set for the homology. Then complete the construction of  $M_1$ so that all surfaces outside $\iota^{-1}D$ have genus larger than $k'$.
 \end{proof}

In contrast to surfaces, the 3--manifolds  constructed in Theorem~\ref{M3iso} with $\iota\ne0$ typically have neighborhoods $U$ in which they cannot be made smooth away from a point by isotopy. In fact, the nonsmooth set must intersect every embedded surface $F\subset M$ whose genus is strictly smaller than $G([F])$ in $U$. This is consistent with Quinn's work showing that smoothing in dimension 4 breaks down on a set with codimension~2 intersection properties. For an example without homology, a different method shows that Freedman's tame embedding of the Poincar\'e homology sphere $\Sigma$ in $S^4$ cannot be isotoped so that smoothness fails only on some 3--ball embedded in $\Sigma$ with tame boundary \cite{MinGen}.

\section{Uncountable families with the same genus functions}\label{Uncountable}

It should be apparent from previous sections that the exotic behavior detected by the genus functions is quite different from that detected by older invariants that were developed to study smoothings of the contractible manifold $\R^4$. To see this contrast more clearly, we now combine the two technologies. We show that under various hypotheses, our previous results hold for uncountably many diffeomorphism types for each choice of genus data. (Recent work of Bennett \cite{B} shows that the Taylor invariant also can often be controlled independently.) Our techniques are compatible with the Stein condition, allowing us to prove a corollary for classical complex analysis (Corollary~\ref{holomorphy}) that yields uncountable families of exotic Stein domains as stated in Theorem~\ref{introHolomorphy}. This follows from the more general Theorem~\ref{uncountable} involving manifolds with definite intersection forms. (For an even more general version, see Remark~\ref{minuscompact}.)

\begin{thm}\label{uncountable} Let $\pi\co\widetilde X\to X$ be a covering of noncompact 4--manifolds, with $X$ connected and $\widetilde X$ oriented, lifting an orientation on $X$ if one exists. Suppose there is a smoothing $\Sigma$ on $X$ such that each compact, codimension--0 submanifold of $\widetilde X$ can be $\pi^*\Sigma$--smoothly embedded (preserving orientation) into a smooth, closed, simply connected, negative definite 4--manifold. Then there are other such smoothings $\Sigma_t$ of $X$, whose lifts $\pi^* \Sigma_t$ realize uncountably many diffeomorphism types on each component of $\widetilde X$. Each $\Sigma_t$ embeds in $\Sigma$ and is stably isotopic to it, has the same genus functions $G$ and $G_\infty$ as $\Sigma$, and for each $\alpha\in {\rm H}_3(X)$ has the same 3--manifolds arising as smoothly embedded representatives of $\alpha$. The smoothings  $\pi^*\Sigma_t$ are similarly related to  $\pi^*\Sigma$. If $\Sigma$ (resp. $\pi^*\Sigma$) has a Stein structure respecting the orientation on $\widetilde X$, then so does each $\Sigma_t$ (resp. $\pi^*\Sigma_t$ if $X$ is orientable).
\end{thm}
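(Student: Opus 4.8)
The plan is to run the construction behind Theorem~9.4.24 of \cite{GS} --- the DeMichelis--Freedman ``radial family'' end-sum --- and then to track the extra data ($G$, $G_\infty$, embedded $3$-manifolds, Stein structures) that the present theorem records. First fix, by \cite{DF} (cf.\ Section~9.4 of \cite{GS}), a radial family $\{R_t\mid t\in[0,1)\}$ of small exotic $\R^4$'s: each $R_t$ is homeomorphic to $\R^4$ and smoothly embeds as an open subset of the standard $\R^4$; $R_s$ is a smooth open submanifold of $R_t$ with compact closure in $R_t$ whenever $s\le t$; the $R_t$ may be taken to carry Stein structures (realized through Casson-handle constructions, cf.\ Theorem~\ref{ann}); and the family has the gauge-theoretic ``no-shrinking'' property of \cite{DF} underlying Theorem~9.4.24 of \cite{GS}. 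Pick a proper ray in $X$ to a chosen end (taking $\widetilde X$ connected for notational convenience; in general one performs a separate end-sum for each component, along rays whose preimages meet that component). Let $X_{\Sigma_t}$ be obtained by end-summing $X_\Sigma$ with $R_t$ along this ray; since $R_t\approx\R^4$ the end-sum is a topological no-op, so $X_{\Sigma_t}$ is homeomorphic to $X$, and transporting its smooth structure back by a homeomorphism gives $\Sigma_t$. Because $R_t$ embeds openly in $\R^4$, $X_{\Sigma_t}$ embeds openly and orientation-preservingly in $X_\Sigma$; because $R_t\times\R$ is standard, $\Sigma_t$ is stably isotopic to $\Sigma$. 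Pulling back along $\pi$, the ray lifts to a disjoint union of rays and $\pi^*\Sigma_t$ is exactly $\pi^*\Sigma$ with one copy of $R_t$ end-summed along each lifted ray; the same two facts hold for it.

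\textbf{Invariance of the genus data.} The manifolds $X_\Sigma$ and $X_{\Sigma_t}$ share a smooth open submanifold $X^-$, namely $X_\Sigma$ with a thin proper half-tube (a closed tubular neighborhood of the end-sum ray) deleted. Mayer--Vietoris shows $X^-\emb X_\Sigma$ and $X^-\emb X_{\Sigma_t}$ are homology equivalences, since the deleted half-tube, and the $R_t$-summand added in $X_{\Sigma_t}$, are contractible and glued on along a contractible set. Moreover, by general position any smoothly embedded closed surface or closed $3$-manifold in $X_\Sigma$ can be isotoped off the ray, hence into $X^-$, and any such submanifold in $X_{\Sigma_t}$ can be isotoped off the contractible $R_t$-summand and the neck, again into $X^-$. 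Hence $G$, $G_\infty$, and, for each $\alpha\in{\rm H}_3(X)$, the set of smoothly embedded representatives of $\alpha$ agree for $\Sigma$ and $\Sigma_t$; the same holds with $\Z_2$ coefficients and for nonorientable surfaces, and verbatim on $\widetilde X$ for $\pi^*\Sigma$ versus $\pi^*\Sigma_t$.

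\textbf{Distinguishing diffeomorphism types.} Suppose $s<t$ and $\phi\colon(\widetilde X)_{\pi^*\Sigma_t}\to(\widetilde X)_{\pi^*\Sigma_s}$ is a diffeomorphism. Since $R_s\subset R_t$ is a smooth open submanifold, $(\widetilde X)_{\pi^*\Sigma_s}$ is a smooth open submanifold of $(\widetilde X)_{\pi^*\Sigma_t}$, so composing with $\phi$ gives a smooth orientation-preserving self-embedding of $(\widetilde X)_{\pi^*\Sigma_s}$ with proper image; iterating yields an exhausting telescope. Concentrating on one end-summed copy of $R_t$ and discarding everything outside a large compact piece, the hypothesis provides the caps: each compact codimension-$0$ submanifold of $\widetilde X$ embeds orientation-preservingly in a smooth, closed, simply connected, negative definite $4$-manifold, and the compact subsets of the small exotic $\R^4$'s $R_s$ embed in $S^4$, so all the relevant compact pieces cap off inside negative definite closed $4$-manifolds; assembling these caps along the telescope (as in \cite{DF}) produces a negative definite end-periodic $4$-manifold excluded by Taubes' end-periodic gauge theory --- equivalently, by Donaldson's diagonalization theorem after a further capping. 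This is exactly the robustness of the \cite{DF} ``no-shrinking'' property under end-sum and under passage to covers that powers Theorem~9.4.24 of \cite{GS}. Hence no such $\phi$ exists for $s<t$, so each diffeomorphism type of $\pi^*\Sigma_t$ occurs for only countably many $t$, and the cardinality argument of \cite{DF} (cf.\ the proof of Theorem~\ref{cantor}) yields continuum-many diffeomorphism types in ZFC, on each component of $\widetilde X$; taking $\pi=\id_X$ gives the statement on $X$ itself. Finally, if $\Sigma$ (resp.\ $\pi^*\Sigma$) carries a Stein structure inducing the given orientation, then so does $\Sigma_t$ (resp.\ $\pi^*\Sigma_t$, when $X$ is orientable, so that all lifted copies of $R_t$ inherit $R_t$'s orientation rather than the reversed one): the $R_t$ are Stein, and an end-sum of Stein surfaces along a properly embedded ray carries a Stein structure (alternatively, realize everything as Casson-type refinements and invoke Theorem~\ref{ann} and \cite{Ann}).

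\textbf{Main obstacle.} The substantive point is the third step: verifying that the \cite{DF} gauge-theoretic obstruction genuinely survives end-summing with a topologically nontrivial $X$ and passage to the cover $\widetilde X$ --- i.e.\ that ``negative definite after capping'' is precisely the hypothesis needed to run the end-periodic argument in this generality. This is the heart of Theorem~9.4.24 of \cite{GS}; by comparison the push-off-a-tube invariance of $G$, $G_\infty$ and the ${\rm H}_3$-representatives, and the Stein compatibility of the end-sum, are routine bookkeeping.
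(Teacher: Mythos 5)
Your proposal follows the same broad strategy as the paper --- end-sum $X_\Sigma$ with a DeMichelis--Freedman radial family of Stein exotic $\R^4$'s and distinguish by gauge theory --- but two steps, both load-bearing, are not justified as written.

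The claimed invariance of $G$, $G_\infty$, and the embedded $3$-manifolds does not follow from ``general position.'' It is true that a compact surface in $X_\Sigma$ misses the $1$-dimensional ray generically and can then be pushed radially out of the tube $U$ into $X^-$. But the $R_t$-summand together with its neck is a \emph{codimension-zero} open subset of $X_{\Sigma_t}$; transversality says nothing about moving a surface off a codimension-zero region, and contractibility of that region supplies a nullhomotopy, not an ambient smooth isotopy. (If $R_t$ were standard you could radially collapse, but $R_t$ is exotic, and there is no smooth radial structure to push along.) The paper bypasses this by constructing two explicit smooth embeddings $i\co X_\Sigma\to X_{\Sigma'}$ and $j\co X_{\Sigma'}\to X_\Sigma$, each topologically isotopic to $\id_X$: the map $i$ comes from sliding $\gamma$ out to infinity and restricting to the inverse, and $j$ uses the smooth embedding $R\hookrightarrow S^4$ to pack $R-\cl U_R$ into a small ball inside $U$, end-summed back onto $X_\Sigma-\cl U$ along a thickened arc. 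Those embeddings transport smooth submanifolds to smooth submanifolds in the same homology class, immediately giving both inequalities for $G$, $G_\infty$ and the $\rm H_3$-representatives without any isotopy inside $X_{\Sigma'}$. You note the $S^4$-embedding of $R_t$, but never use it where it is actually needed.

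Your distinguishing step is also too strong. You assert ``no such $\phi$ exists for $s<t$,'' i.e.\ that $\pi^*\Sigma_s$ and $\pi^*\Sigma_t$ are never diffeomorphic, but even for the bare family $\{R_t\}$ DeMichelis--Freedman establish only that each diffeomorphism type occurs countably often. The reason your telescope stalls is that a hypothetical diffeomorphism $\phi$ need not restrict to (nor be isotopic to) the identity on the preferred compact piece $K\subset R_1\subset R_t$ sitting in a given summand --- it could scatter $K$ anywhere in the target --- and the end-periodic capping construction requires precisely that control near $K$. The paper's Lemma~\ref{radial} first rules out $K$-preserving embeddings $X_t'\to X_s'$ for $s<t$ by building the end-periodic contradiction, then derives countability indirectly: if $\varphi\co X_t'\to X_s'$ and $\psi\co X_r'\to X_s'$ had $\varphi|K$ isotopic to $\psi|K$, then $\psi^{-1}\circ\varphi$ would give a $K$-controlled map and contradict the first step, so $\{t : X_t'\cong X_s'\}$ injects into the countable set of isotopy classes of embeddings $K\hookrightarrow X_s'$. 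Finally, the gauge-theoretic obstruction is not merely ``negative definite end-periodic is excluded by diagonalization''; Lemma~\ref{radial} invokes the specific invariant from the h-cobordism $W$ of \cite{BG} distinguishing $\partial_\pm W$, which persists on the resulting end-periodic manifolds by Taubes \cite{Taubes} and \cite{DF}.
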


\noindent It follows that the smoothings $\Sigma_t$ realize uncountably many diffeomorphism types whenever there is a base point for which $\im \pi_1(\widetilde X)\subset \pi_1(X)$ has only countably many images (up to inner automorphism) under the homeomorphism group of $X$, as is automatically true if $\pi_1(X)$ is finitely generated. Note that there is no claim about 3--manifolds at infinity in the spirit of Remark~\ref{H3}(b). The proof shows that any sequence of 3--manifolds approaching the end of some $\Sigma_t$ can also be found in $\Sigma$ (and similarly for $\widetilde X$), but the converse fails: If $\Sigma$ is the standard smooth structure on $\R^4$, then every compact subset is surrounded by a smooth 3--sphere, whereas this fails for the smoothings $\Sigma_t$ arising in the proof (and for all exotic $\R^4$'s if the 4--dimensional smooth Poincar\'e Conjecture holds).

All of our previously exhibited genus functions can now be realized by uncountably many diffeomorphism types whenever the definiteness condition is satisfied \cite{MinGen}. Here are some highlights:

\begin{cor}[a]\label{combined}
Suppose an orientable $X$ is an $\R^2$--bundle over a closed, orientable surface $F$. Then for any $g'\ge g(F)$ there are uncountably many smoothings for which the minimal genus of a generator of ${\rm H}_2(X)$ is $g'$. If $F$ is nonorientable, the same holds for smoothings of $X$ with minimal genus $g'-1$ in the canonical double cover of $X$, provided that $g'\equiv g(F)$ mod 2. These smoothings admit Stein structures oriented with $e(X)\le 0$, with the exceptions given in Theorem~\ref{bundle}.

\item[b)] For an end-sum of $\R^2$--bundles $X_i$ over surfaces, one can distinguish infinitely many smoothings (uncountably many for an infinite end-sum) by the genus function as in Example~\ref{bundles}(b), and each resulting genus function is realized by uncountably many diffeomorphism types provided  that all $X_i$ are orientable with Euler numbers of the same sign (allowing 0), or that each orientable $X_i$ has $e=0$.

\item[c)] For $S^1\times\R^3$, every $k\in\Z^{\ge0}\cup\{\infty\}$ is realized as the minimal genus of the generator at infinity for uncountably many smoothings.

\item[d)] If a 3--manifold $M$ topologically embeds in $\# n\overline{\C P^2}$ as in Theorem~\ref{M3iso}, then the preimage under $\iota$ of any filtration of $\im\iota$ by direct summands lies in the genus filtration of some smoothing of $M\times\R$, with corresponding characteristic genera arbitrarily large, and the same genus function is realized by uncountably many diffeomorphism types embedding in $\# n\overline{\C P^2}$. One can also similarly control minimal genera of one end with a second filtration of $\im\iota$, in the manner of Theorem~\ref{MxR}.
\end{cor}

\begin{proof} It suffices to check the definiteness condition in the appropriate cover in each case; then Theorem~\ref{uncountable} applies to the relevant theorems and examples. For (a) and (b), note that every orientable $\R^2$--bundle over an orientable surface smoothly embeds in $\# n\overline{\C P^2}$ for some finite $n$ (eg by blowing up the trivial bundle in $S^4$). Thus, the relevant Stein--Casson smoothings also embed. An infinite end-sum of such bundles need not have such an embedding, but every compact subset of it does if the Euler numbers have the same sign. If the total space of a bundle is nonorientable, then its orientable double cover has Euler number 0. For (c), the smoothings from Theorem~\ref{1h} embed in $S^4$ by construction (also yielding a result about arbitrary 1--handlebodies). For (d), combine the proofs of Theorems~\ref{M3iso} and \ref{MxR}.
\end{proof}

\begin{proof}[Proof of Theorem~\ref{uncountable}] If $R$ is an exotic $\R^4$, we can create a new smooth structure $\Sigma'$ on $X$ by forming an {\em end-sum} of $\Sigma$ with $R$ (cf.\ \cite{infR4}, \cite{GS}): Choose rays $\gamma$ and $\gamma_R$ smoothly embedding $[0,\infty)$ into $X_\Sigma$ and $R$, respectively, with tubular neighborhoods $U$ and $U_R$ bounded by smoothly, properly embedded copies of $\R^3$. Choose a homeomorphism (preserving orientations if they have been specified) from $R-U_R$ to $\cl U$ that is smooth near the boundary. (To find such a homeomorphism, use Quinn's handle straightening to smooth $\gamma_R$ and $U_R$ in the standard smoothing of $\R^4$, and conclude that they are topologically standard.) Then push forward the smooth structure of $R$ to $U$ and extend it to agree with $\Sigma$ outside $U$. The resulting smooth manifold $X_{\Sigma'}$ can be thought of as $X_\Sigma$ glued together with $R$ at infinity. Similarly, we can sum with infinitely many exotic $\R^4$'s using a proper embedding of $\Z_+\times [0,\infty)$. The stable isotopy class of $\Sigma$ is preserved under such end-sums since $\cl U\approx \R^3\times [0,\infty)$ has a unique stable isotopy class of smoothings.

\begin{remark} The isotopy and diffeomorphism class of an end-sum of $\Sigma$ with $R$ depend on the end of $X$ specified by $\gamma$, and the local orientations, but are otherwise well defined if, for example, the end is simply connected or topologically collared, or $X$ is a handlebody interior with only finitely many 3--handles \cite{CH}.
\end{remark}

 If $R$ embeds smoothly in $S^4$, then $\Sigma'$ and its lift satisfy all of our requirements for a smoothing $\Sigma_t$ except possibly the Stein condition. The first step in proving this is to construct smooth embeddings $i\co X_\Sigma\to X_{\Sigma'}$ and $j\co X_{\Sigma'}\to X_\Sigma$ that have topological isotopies to the identity $\id_X$. For $i$, begin with the $\Sigma$--smooth family of embeddings $\gamma_s(t)=\gamma(t+s)$, for $0\le s<\infty$, sliding the ray $\gamma$ out to infinity. This can be extended to a smooth family of diffeomorphisms of $X$, whose limit as $s\to\infty$ has domain $X-\cl U$ and is the inverse of $i$. For $j$, choose smooth balls $B\subset U$ and $B_R\subset U_R$, then embed $R-\cl U_R\subset S^4$ into $X_\Sigma$  by identifying $S^4-\int B_R$ with $B\subset U\subset X_\Sigma$. Connecting the boundaries of $U$ and $U_R$ by a thickened smooth arc gives the embedding $j\co X_{\Sigma'}\to X_\Sigma$ as the end-sum of $X_\Sigma-\cl U$ with $R-\cl U_R\subset B$ along the arc. This $j$ is topologically isotopic to $\id_X$ by first radially shrinking the half-space $R-U_R$, then absorbing the arc into $X-\cl U$, and finally applying our previous isotopy of $i$.

 To verify the conclusions of Theorem~\ref{uncountable} for such a smoothing $\Sigma'$, note that $i$ sends any smooth submanifold of $X_\Sigma$ to one in $X_{\Sigma'}$ representing the same homology class, and $j$ does the same in the opposite direction. The statements about $G$ and ${\rm H}_3$ follow immediately for both $\Sigma'$ and $\pi^*\Sigma'$ (since the latter is obtained from $\pi^*\Sigma$ by a multiple end-sum), and  $\pi^*\Sigma'$ inherits the given relation to definite manifolds. To show that end-summing with $R$ cannot increase values of $G_\infty$, note that any smooth surface in $X_\Sigma$ can be pushed off of the 1--manifold $\im\gamma$ while avoiding a preassigned compact subset. (This fails for embedded 3--manifolds.) For the reverse inequality, it suffices to show that for every neighborhood $X-K$ of infinity in $X_\Sigma$, there is one in $X_{\Sigma'}$ with a smooth (not proper) embedding in $(X-K)_\Sigma$ that is isotopic in $X-K$ to the inclusion. But this is easily arranged by repeating our construction of $j$ after truncating $\gamma$ to be disjoint from $K$. Since end-summing is equivalent to attaching the corresponding handlebodies by 1--handles, $X_{\Sigma'}$ will be Stein if both  $X_\Sigma$ and $R$ are (since we assume the orientations are respected), and similarly for $\widetilde X$. We obtain the necessary Stein exotic $\R^4$'s below.

 To obtain an uncountable family, we must choose $R$ with additional care. The simplest known exotic $\R^4$, which we will denote $R^*$, was constructed in \cite{BG}. (See also  \cite[Theorem~9.3.8]{GS}.) This smoothly embeds in $S^4$, and has Stein--Casson type \cite{Ann}, built with two 1--handles, a 2--handle and the Casson handle with a positive, unique double point at each stage. It arose as the limit of a sequence of exotic $\R^4$'s associated to a sequence of nontrivial h-cobordisms of elliptic surfaces. The $\R^4$'s in the sequence are each obtained from $R^*$ by refining the Casson handle (above successively higher stages), and can be assumed (when suitably refined) to retain Stein--Casson type. Let $R_1$ be the first element of this sequence, associated to the h-cobordism $W$ from $\partial_-W=K3\#\overline{CP^2}$ to $\partial_+W$, a sum of $\pm \C P^2$'s. (Presumably $R^*$ could be used in place of $R_1$ below, but the required gauge theory doesn't seem to have been worked out carefully beyond the first h-cobordism in the sequence.) The h-cobordism $W$ is a product outside a compact subset intersecting $\partial_-W$ in a compact, codimension--0 submanifold $K$ contained in, and oriented by, an embedded copy of $R_1\subset\partial_-W$, and $\partial_+W$ is obtained from $\partial_-W$ by removing a large neighborhood of $K\subset R_1$ from $\partial_-W$ and regluing it by a diffeomorphism near the end of $R_1$. We can now obtain a family $\{R_t|t\in Q\}$ of Stein exotic $\R^4$'s, indexed by the standard Cantor set $Q\subset I=[0,1]$, such that $K\subset R_s\subset R_t$ with compact closure whenever $s<t$: First note that any given compact subset $L\subset R_1$ has intersection with the Casson handle lying in some finite subtower $T_n$. The complement of $T_n$ in the Casson handle is a disjoint union of higher-stage Casson handles. Setting $L=K$ and applying Quinn's Handle Straightening Theorem (see proof of Lemma~\ref{nest}), we can find Casson handles inside these, which when attached to $T_n$ produce $R_{1/3}$, a new Stein exotic $\R^4$ such that $K\subset R_{1/3}\subset R_1$ with compact closure. Iterating, for all endpoints in $Q$ of middle thirds, and passing to the limit as in \cite{F}, gives the required family. (Such families of nested Casson handles indexed by $Q$ were a crucial part of Freedman's proof that Casson handles are homeomorphic to 2--handles. However, they are much harder to construct without presupposing Freedman's Theorem through Quinn's.) A gauge-theoretic argument developed by DeMichelis and Freedman \cite{DF} shows that each diffeomorphism type in the family $\{R_t\}$ occurs only countably often, whereas $Q$ has the cardinality of the continuum, so there are uncountably many diffeomorphism types (with this same cardinality in ZFC set theory). If the smooth structure $\Sigma_t$ on $X$ is obtained by end-summing $\Sigma$ with $R_t$ as above, then Lemma~\ref{radial} below, with each $X_t$ a component of $\widetilde X_{\pi^*\Sigma}$ independent of $t$, generalizes this result to the family $\{\pi^*\Sigma_t\}$, completing the proof.
\end{proof}

\begin{lem}\label{radial} For $(R_1,K)$ as in the previous proof and a subset $Q\subset I$ containing 1, let $\{(R_t,K)|t\in Q\}$ be a family of smooth manifolds homeomorphic to $\R^4$, each containing a smoothly embedded copy of $K$, such that whenever $s,t\in Q$ with $s<t$, there is a smooth embedding $R_s\emb R_t$ whose image has compact closure, restricting to the identity on $K$. Let $\{X_t|t\in Q\}$ be any family of smooth, oriented, noncompact 4--manifolds (allowing boundary) such that each compact, codimension--0 submanifold of each $X_t$ smoothly embeds, preserving orientation, into a smooth, closed, simply connected, negative definite 4--manifold. For each $t\in Q$, let $X_t'$ be obtained from $X_t$ by end-summing with (possibly infinitely many) copies of $R_t$, with at least one sum respecting the orientations of $X_t$ and $K$. Then in the family  $\{X_t'|t\in Q\}$, each diffeomorphism type appears only countably often.
\end{lem}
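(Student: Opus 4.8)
The plan is to rerun the gauge-theoretic argument of DeMichelis--Freedman \cite{DF} --- which handles the bare family $\{R_t\}$ --- with the end-summed manifolds $X_t'$ in place of the $R_t$. That argument has three ingredients: (i) the \emph{radial nesting} $K\subset R_s\subset R_t$ (compact closure, identity on $K$) for $s<t$; (ii) a hypothetical diffeomorphism between two members of the family; and (iii) the fact that every compact codimension-0 submanifold of the members embeds, orientation-preservingly, into a closed, simply connected, negative definite (hence, by Donaldson's theorem, diagonalizable) $4$-manifold. From (i) and (ii) one builds, inside a single member, an infinite ``Russian-doll'' tower of smoothly embedded compact submanifolds whose successive complements are mutually diffeomorphic cobordisms; their union is a smooth $4$-manifold with a periodic end modeled on the nontrivial h-cobordism $W$ of the preceding proof (from $\partial_-W=K3\#\overline{\C P^2}$ to $\partial_+W$ a connected sum of $\pm\C P^2$'s). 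Property (iii) then lets Taubes' gauge theory for periodic ends \cite{Taubes} apply, and it contradicts the nontriviality of $W$ (its ends being distinguished by Seiberg--Witten invariants), just as in the closing remark of Section~\ref{Submfd}. The upshot is that no diffeomorphism type occurs uncountably often, since an uncountable subfamily of mutually diffeomorphic members contains such a pair.

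Concretely I would argue by contradiction: assume a fixed smooth $Y$ is diffeomorphic to $X_t'$ for all $t$ in an uncountable $S\subseteq Q$. Discarding a countable set (Cantor--Bendixson), I may assume $S$ is dense-in-itself, so it contains a strictly monotone sequence $t_0,t_1,t_2,\dots$; fix diffeomorphisms $\psi_i\co X_{t_i}'\to Y$. Because end-summing $X_{t_i}$ with copies of $R_{t_i}$ alters each $R_{t_i}$ only along a ray running to infinity, a fixed compact neighborhood of the distinguished copy $K_0\subset R_{t_i}$ of $K$ --- coming from a summand respecting the orientations of $X_{t_i}$ and $K$ --- sits unchanged inside $X_{t_i}'$, and in fact $X_{t_i}'$ contains a smoothly embedded truncation of $R_{t_i}$ about $K_0$; moreover, after pushing the summing rays to infinity, the embeddings $R_{t_i}\emb R_{t_{i+1}}$ fixing $K$ restrict to embeddings of these truncations. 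Transporting this radial data through the $\psi_i$ and re-selecting the nested copies with the self-diffeomorphisms of $Y$ exactly as in \cite{DF}, I obtain the desired tower inside $Y$, hence a submanifold of $Y$ with a periodic end modeled on $W$.

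It then remains to feed this into Taubes' theorem. The intersection-form hypothesis is inherited: any compact codimension-0 submanifold of $X_t'$ lies in a boundary connected sum of a compact piece of $X_t$ with compact pieces of copies of $R_t\subset S^4$, and such a boundary sum embeds orientation-preservingly into the connected sum of a closed, simply connected, negative definite manifold with copies of $S^4$ --- again closed, simply connected, negative definite --- so the same holds for $Y$ and for the periodic-end submanifold above. Admissibility (nondegeneracy) of the end is automatic since it is modeled on $W$, a genuine h-cobordism of closed simply connected $4$-manifolds, exactly as in \cite{DF}. With these in hand, Taubes' periodic-end diagonalization theorem delivers the same contradiction as in the bare case, completing the proof. (If one prefers to avoid monotone sequences, a single diffeomorphic pair $X_s'\cong X_t'$ with $s<t$ already suffices: iterate $R_s\emb R_t$ against a self-diffeomorphism of $Y$ to generate the tower.)

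The step I expect to be the real obstacle is the ``transport'' in the second paragraph: producing an honestly nested tower in $Y$ whose consecutive complements are \emph{mutually diffeomorphic} cobordisms --- so the end is genuinely periodic and genuinely modeled on $W$, not merely eventually nested --- while the ambient factors $X_{t_i}$ vary and several end-sums are present. The resolution is that all the gauge-theoretically essential geometry lives in the $R_{t_i}$-direction near $K$, which is untouched by the summing and rigidly identified across the family by the embeddings $R_s\emb R_t$ fixing $K$; the varying $X_{t_i}$ and the extra summands are therefore irrelevant near $K$, and what remains is precisely the bookkeeping of \cite{DF}. A secondary point, again handled as in \cite{DF}, is arranging the periodic end to satisfy Taubes' nondegeneracy hypotheses, which follows from the structure of $W$.
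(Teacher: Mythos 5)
Your proposal gets the broad gauge-theoretic flavor right (periodic ends and Taubes), but it is missing the step that actually produces the word ``countably,'' and your proposed substitutes for it do not work.

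The paper's proof has three parts: (1) for $s<t$, no diffeomorphism $X_t'\to X_s'$ can restrict to the identity on the distinguished copies of $K$ --- this is where the embedding $i\co R_s\emb R_s\#N$ is manufactured from the definiteness hypothesis, using the radial nesting $R_s\emb R_t\emb X_t'\emb X_s'$ and the boundary-sum decomposition $K_1\natural K_2$ of a compact neighborhood; (2) such an $i$ is impossible by the end-periodic argument of DeMichelis--Freedman/Taubes applied to the h-cobordism $W$; (3) therefore, for a fixed $X_s'$, any two diffeomorphisms $\varphi\co X_t'\to X_s'$, $\psi\co X_r'\to X_s'$ with $r\ne t$ must restrict to non-isotopic embeddings of $K$, so the set of $t$ with $X_t'\cong X_s'$ injects into the set of isotopy classes of embeddings of the compact manifold $K$ into $X_s'$, which is countable. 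Step (3) is where countability comes from, and it is entirely absent from your proposal. Your Cantor--Bendixson reduction plus a monotone sequence $t_0<t_1<\cdots$ does not replace it: the diffeomorphisms $\psi_i\co X_{t_i}'\to Y$ are arbitrary, so the images $\psi_i(K)$ need not agree, need not be nested, and need not be isotopic to one another, and the ``transport''/``re-selecting'' step you flag as the likely obstacle is exactly where the argument breaks --- there is no reason the consecutive complements of your proposed Russian-doll tower in $Y$ should be mutually diffeomorphic cobordisms, so you do not get a genuinely periodic end from an unstructured monotone family.

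Your parenthetical claim that ``a single diffeomorphic pair $X_s'\cong X_t'$ with $s<t$ already suffices'' is outright false: the lemma asserts only countable multiplicity, not that distinct $t$ give distinct diffeomorphism types, so there can certainly be diffeomorphic pairs, and no contradiction can be extracted from one. What \emph{is} forbidden, by the paper's steps (1)--(2), is a diffeomorphism restricting to the identity (or even to something isotopic to a previously used embedding) on $K$. Note that the definiteness hypothesis enters in step (1), in assembling the negative definite $N$ from a compact piece $K_2\subset X_s$, not (as you suggest) merely as an admissibility hypothesis for Taubes' theorem. With the isotopy-class counting of step (3) added, a version of your ``argue by contradiction from an uncountable subfamily'' outline could be made to close up --- uncountably many $t$ with $X_t'\cong Y$ would force two with isotopic induced embeddings of $K$, hence a diffeomorphism that can be isotoped to be the identity on $K$, contradicting steps (1)--(2) --- but as written the proposal does not contain that mechanism.
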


\noindent While we did not need $X_t$ to depend on $t$, this generality causes no difficulties and will be needed in \cite{steintop} to give a sharper version of our Corollary~\ref{holomorphy}.

\begin{proof} We use the method of \cite{DF} augmented as in \cite{menag}. First we show that for $s,t\in Q$ with $s<t$, no diffeomorphism (or even embedding) can map $X_t'$ into $X_s'$ so that the copy of $K$ in one $R_t$ summand maps by the identity to $K$ in some $R_s$ summand respecting the orientation of $X_s$. Otherwise, we could find an embedding $i\co R_s\emb R_s\# N$, for some negative definite $N$ as given in the lemma (oriented compatibly with $K\subset R_s$), with $i|K=\id_K$, and with $i(R_s)$ having compact closure. We will see below that this is a contradiction. To construct $i$, first compose the given rel $K$ embeddings, $R_s\emb R_t\emb X_t'\emb X_s'$ (so compact closure is inherited from the first embedding). Then embed the latter into the end-sum $X_s''$ of $X_s$ with just the given copy of $R_s$, by eliminating any other summands as in the previous proof, using the fact that $R_1$ embeds in $R^*$ and hence in $S^4$. The image of $R_s$ under the composite embedding lies in a compact subset of $X_s''$, which we can assume (after enlarging as necessary) is a 4--manifold intersecting the separating $\R^3$ in a 3--ball. Thus, it can be written as the boundary sum $K_1\natural K_2$ of two pieces with $K\subset K_1\subset R_s$ and $K_2\subset X_s$. By hypothesis, $K_2$ then embeds in a closed, negative definite $N$. Form the connected sum $R_s\# N$ using balls disjoint from $K_1,K_2$ but near their correct boundary components so that we may ambiently add a 1--handle to get the required embedding $R_s\emb K_1\natural K_2\emb R_s\# N$.

The required contradiction now arises exactly as in the proof of \cite[Lemma~1.2]{menag}. Briefly, for the cobordism $W$ in the previous proof, we have embeddings $R_s\emb R_1\emb \partial_- W$ rel $K$ and $R_s-K\emb \partial_+ W$ that are identified outside of $K$ by the product structure on $W$. By repeatedly summing $\partial_-W$ with $N$ and applying the embedding $i$, we obtain $\partial_-W\#n N$ with $n$ consecutive rings, each diffeomorphic to $R_s\# N$ with a fixed compact subset of $i(R_s)$ removed, separating a neighborhood of $K$ from the rest of the manifold. Removing the neighborhood of $K$ and letting $n\to\infty$, we obtain an open 4--manifold with a negative definite, periodic end. A similar construction applies to $\partial_+ W$, and these two end-periodic manifolds are diffeomorphic by construction. However, the manifolds $\partial_\pm W$ are distinguished by a gauge-theoretic invariant, which by work of Taubes \cite{Taubes} (followed by \cite{DF}) is well-defined on the end-periodic manifolds but unchanged during their construction from $\partial_\pm W$. Since the end-periodic manifolds are diffeomorphic, this is a contradiction.

Now for $s\ne t$, suppose there is a diffeomorphism $\varphi\co X_t'\to X_s'$. For any choice of end-summands respecting orientations, we have shown that $\varphi$ cannot be the identity on the corresponding copies of $K$. Given another diffeomorphism $\psi\co X_r'\to X_s'$ with $r\ne t$, then $\varphi|K$ and $\psi|K$ cannot be smoothly isotopic for any such $K$, or else $\psi^{-1}\circ\varphi$ would contradict this assertion. Thus, the set of $t$ for which $X_t'$ is diffeomorphic to a given $X_s'$ has cardinality bounded by that of the set of isotopy classes of embeddings $K\emb X_s'$. However, this latter cardinality is countable since $K$ is compact. (For example, pass to the PL category and note that any fixed triangulations of $K$ and $X_s'$ support only countably many embeddings, then note that any given embedding is captured after finitely many barycentric subdivisions.)
\end{proof}

\begin{Remark}\label{minuscompact} For $\widetilde X$ connected, Theorem~\ref{uncountable} remains true under the weaker hypothesis that $\widetilde X$ has a closed, noncompact subset $Y$ with a compact, $\pi^*\Sigma$--smooth 3--manifold boundary, for which every compact submanifold of $Y$ embeds as specified in the theorem \cite{MinGen}. This is because $Y$ has uncountably many smoothings but $\partial Y$ has only countably many embeddings in a given smoothing of $\tilde X$. The theorem also sharpens Lemma~\ref{nest2}: If $\int\widetilde H_0$ is connected and embeds $\pi^*\Sigma$--smoothly in a smooth, closed, simply connected, negative definite 4--manifold, then each combination of $G$ and $G_\infty$ that is realized by some $\varphi_1$ on $\int H_0-C$ and $\int\widetilde H_0-\widetilde C$ can also be realized by other isotopies yielding uncountably many smoothings on $\int\widetilde H_0-\widetilde C$. Similar families arise for coverings $\pi\co\widetilde X\to X$ whose smoothings are obtained by applying Lemma~\ref{nest2} to a proper topological embedding of $H_0-C$. (Locate $Y$ as above in $\int\widetilde H_0-\widetilde C$.) This gives some improvements in Section~\ref{Apps}.
\end{Remark}

It is a classical problem to understand which open subsets of $\C^2$ are {\em domains of holomorphy}, that is, are Stein surfaces in the complex structure inherited from the embedding. The question is studied in  $\C^2$ and in more general complex surfaces $S$ in \cite{JSG}, \cite{steindiff}, where the subset is allowed to vary by topological, resp.\ smooth, isotopy of its inclusion map. Theorem~\ref{uncountable} augments these results to show that such Stein open subsets frequently occur in isotopic families realizing uncountably many diffeomorphism types.

\begin{cor}\label{holomorphy} Let $S$ be a complex surface  that smoothly embeds (preserving orientation) in a possibly infinite blowup of  $\C^2$. Let $f\co X\emb S$ be a topological embedding of an open 4--manifold.
\begin{itemize}
\item[a)] Suppose $f(X)$ is a Stein open subset of $S$. Then $f$ is topologically isotopic in $S$ to other embeddings with  uncountably many diffeomorphism types of Stein images, whose genus functions $G$ and $G_\infty$ are the same as for $f$.
\item[b)] Alternatively, suppose $X$ is homeomorphic to the interior of a 2--handlebody. Then $f$ is topologically isotopic to embeddings with uncountably many diffeomorphism types of Stein images, whose genus functions $G$ and $G_\infty$ all agree, and for which $G$ can be controlled as in Lemma~\ref{main}. If ${\rm H}_2(X)$ is finitely generated, then any maximal filtration of it can be chosen to be the genus filtration as in Theorem~\ref{filt}.
    \end{itemize}
\end{cor}

\begin{Remarks} (a) While we do not conclude that the isotopies are ambient, this can be arranged if $f$ extends smoothly (resp.\ tamely and properly) over the boundary of the relevant handlebody \cite{steintop}.

(b) The hypothesis embedding $S$ includes the classical case $S=\C^2$, but is still restrictive. However, many indefinite complex surfaces, such as those on the BMY line and bundles over Riemann surfaces, are covered by open subsets of $\C^2$. The corollary easily extends to include such cases \cite{MinGen}.
\end{Remarks}

\begin{proof} To prove (a) of the theorem, note that while $S$ may be an infinite connected sum with copies of $\overline {\CP^2}$, every compact subset of $f(X)$ only intersects finitely many summands, and so lies in a finite blowup of $S^4=\C^2\cup\{\infty\}$. Thus, Theorem~\ref{uncountable} applies, with the smoothing $\Sigma$ induced by $f$, yielding exotic smoothings $\Sigma_t$ obtained by end-summing $\Sigma$ with $R_t$. These all admit Stein structures respecting the complex orientation of $X$, since $\Sigma$ does by hypothesis. Each $X_{\Sigma_t}$ smoothly embeds into $X_\Sigma$, and hence into $S$, and (by the proof of Theorem~\ref{uncountable}) the latter embedding $f_t$ is topologically isotopic to $f$ rel the core 2--complex $C$ of $X$ (which is the interior of a 2--handlebody). By \cite{steindiff} (cf.\ also \cite[Theorem~13.8]{CE}), an embedding onto an open subset of a complex surface is smoothly isotopic to one with Stein image if and only if the induced complex structure on the domain is homotopic (through almost-complex structures) to a Stein structure on it. This condition is trivially satisfied for $\Sigma$. Since the Stein structures of $\Sigma$ and $\Sigma_t$ are equal near $C$, which is a deformation retract of $X$, the condition is also satisfied by $\Sigma_t$, so $f_t$ is $\Sigma_t$--smoothly isotopic to an embedding in $S$ with Stein image.

Part (b) follows from \cite{JSG}, which proves that any embedding of such an $X$ into a complex surface is topologically isotopic to one with Stein image. Another way to view that theorem is that we can arrange $X$ to have Stein--Casson type, realizing the homotopy class of almost-complex structures induced by the embedding, then invoke \cite{steindiff}. To control $G$, further refine the Stein--Casson smoothing using Lemma~\ref{main} and Addendum~\ref{mainadd} (before applying \cite{steindiff}). The result now follows from (a).
\end{proof}

The hypothesis that subsets of $\widetilde X$ embed in negative definite manifolds can be bypassed by using a different technique for constructing uncountable families, related to the large exotic $\R^4$ construction:

\begin{thm}\label{large}
Let $X=X^*-\{x\}$, where $X^*$ is $\pm\C P^2$, $S^2\times S^2$, a K3--surface, or a finite connected sum of these. Let $\{A_i|0\le i\le b_2(X)\}$ be a filtration of ${\rm H}_2(X)$ by distinct direct summands. Then there are uncountably many diffeomorphism types of Stein smoothings on $X$, having genus filtration $\{A_i\}$ and all having the same genus--rank function, whose characteristic genera can be chosen to increase arbitrarily rapidly. The smoothings can be assumed to have the same values of $G(\alpha)$ for any preassigned finite collection of classes $\alpha\in {\rm H}_2(X)$.
\end{thm}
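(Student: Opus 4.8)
The plan is to combine the genus-filtration machinery of Section~\ref{Main} with the ``large exotic $\R^4$'' technique underlying Theorem~\ref{uncountable}. First observe that $\C P^2$, $S^2\times S^2$, and the K3 surface each admit a handle decomposition with handles only of index $0$, $2$, and $4$ (for K3, e.g., via its elliptic fibration), and this is preserved under connected sum. Hence $X=X^*-\{x\}$ is homeomorphic to the interior of a handlebody $H$ with all indices $\le 2$, and ${\rm H}_2(X)\cong {\rm H}_2(X^*)$ is free of rank $b_2(X^*)$. So Theorem~\ref{filt} applies: the prescribed maximal filtration $\{A_i\}$ is realized as the genus filtration of some Stein--Casson smoothing $\Sigma$ of $X$, with arbitrarily rapidly increasing characteristic genera. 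To also pin down $G(\alpha)$ on a preassigned finite collection, I would run the proofs of Lemma~\ref{main} and Theorem~\ref{filt} while, on the stable subhandlebody in which each such $\alpha$ first appears, realizing $\alpha$ by an explicit surface of the desired genus via a Legendrian Kirby diagram and self-plumbings exactly as in the proof of Theorem~\ref{bundle}, simultaneously choosing the Chern cocycle (rotation numbers, via Addendum~\ref{mainadd}) so that the adjunction inequality forbids any smaller genus. This produces a Stein--Casson $\Sigma$ carrying all the required genus data.

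Second, I would end-sum $\Sigma$ with the Cantor-set--indexed family $\{R_t\mid t\in Q\}$ of Stein exotic $\R^4$'s constructed in the proof of Theorem~\ref{uncountable} (those associated to the h-cobordism $W$ from $K3\#\overline{\C P^2}$ to a connected sum of $\pm\C P^2$'s, with the fixed compact core $K\subset R_s\subset R_t$ for $s<t$), summing along the unique end of $X$ and respecting the complex orientations, to obtain smoothings $\Sigma_t$. As shown in that proof, end-summing with $R_t$ preserves the Stein condition, preserves the stable isotopy class, and changes neither $G$ nor $G_\infty$ (via the mutually topologically-isotopic-to-the-identity embeddings $i$ and $j$ built there, which preserve homology and genus). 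Consequently every $\Sigma_t$ has the same genus filtration $\{A_i\}$, the same genus-rank function (hence the same characteristic genera), and the same values $G(\alpha)$ on the chosen finite set.

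Third --- the crux --- I would show that uncountably many of the $\Sigma_t$ are pairwise non-diffeomorphic, with the cardinality of the continuum in ZFC, by adapting the end-periodic gauge-theory argument of \cite{menag} (cf.\ the proof of Lemma~\ref{radial}). Given $s<t$ and a diffeomorphism $\varphi\co X_{\Sigma_t}\to X_{\Sigma_s}$ restricting to the identity on a copy of $K$ inside an $R$-summand and respecting orientations, compose the nesting $R_s\emb R_t$ (whose image has compact closure) with $\varphi$, and enlarge the precompact image to a compact $P=K_1\natural K_2\subset X_{\Sigma_s}=X_\Sigma\natural R_s$ split by the separating $\R^3$, with $K\subset K_1\subset R_s$ and $K_2\subset X_\Sigma$. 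The point where this differs from Lemma~\ref{radial} is that $X^*$ need not be negative definite, so $K_2$ cannot simply be absorbed into a closed negative definite manifold. Instead, I would exploit that $R_s$ is acyclic (so by Mayer--Vietoris the part of its image on the $X_\Sigma$-side is acyclic) together with the facts that $X$ is a $2$-handlebody interior and $\Sigma$ is of Casson type (so $X_\Sigma$ embeds smoothly in $\int H=X^*-\{x\}$), to arrange $K_2$ to be homologically trivial in $X^*$, hence to have trivial intersection form and to embed in $S^4$. This yields a self-embedding of $R_s$ rel $K$ with compact closure, not topologically isotopic to the identity near the end; inserting it into the periodic-end construction over $W$ and invoking Taubes' invariant for homologically trivial periodic ends \cite{Taubes} reproduces the contradiction of \cite{menag}. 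The uncountability and ZFC conclusions then follow as in the proof of Theorem~\ref{uncountable}: since $X$ is simply connected, the only obstruction is that a fixed diffeomorphism type could occur for more than countably many $t$, and this is ruled out by bounding the count by the countable set of isotopy classes of embeddings of the compact set $K$. I expect the hard part to be precisely the verification in this last paragraph --- that $K_2$ can be taken homologically trivial with suitably standard boundary (so the relevant periodic end has vanishing $b_2^+$ regardless of $\sigma(X^*)$) and that Taubes' invariant is genuinely well-defined and unchanged along the construction over $W$; a fallback, if a standard-boundary $K_2$ is unavailable, is to instead invoke Corollary~\ref{minuscompact} with $Y$ a homologically trivial neighborhood of the end of $X_\Sigma$, or the variant of \cite{menag} in which the ambient definite manifold is allowed to depend on the data.
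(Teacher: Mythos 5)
Your setup via Theorem~\ref{filt} is essentially correct, and the observation that end-summing with the $R_t$ preserves $G$, $G_\infty$, the Stein condition, and the stable isotopy class is right. However, your distinguishing step has a genuine gap, and the fix you propose does not work; the paper takes a different route precisely to avoid this problem.

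The obstruction is the one you yourself flag: $X^*$ need not be (and for $S^2\times S^2$ and $K3$ simply is not) absorbable into a negative definite manifold, so the hypothesis of Lemma~\ref{radial} fails. Your proposed repair---that Mayer--Vietoris forces the $X_\Sigma$-side $K_2$ of the enlarged image to be acyclic---is incorrect. The image of the acyclic $R_s$ is indeed acyclic, but $K_2$ is an \emph{arbitrary} compact codimension-0 submanifold of $X_\Sigma$ chosen merely to contain the part of that image lying on the $X_\Sigma$-side after an unknown diffeomorphism $\varphi$; there is no mechanism preventing $\varphi$ from sending the $R_t$-end into the core of $X_\Sigma$, where any containing $K_2$ would pick up $b_2^+>0$ (e.g.\ a hyperbolic pair for $S^2\times S^2$, or $3$ for $K3$). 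Nothing about the acyclicity of $R_s$ propagates to $K_2$. Your fallback via Corollary~\ref{minuscompact} applied to a neighborhood of the end also fails, for exactly the reason the paper itself notes immediately after that corollary: the Stein--Casson smoothing replaces a 2-handle by a Casson handle, after which compact subsets of the end region typically cannot be excised into negative definite manifolds (this is the standard ``large exotic $\R^4$'' obstruction).

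The paper's actual argument sidesteps this by never end-summing at all. One embeds $X$ topologically in $Z=X^*\#Y$, where $Y$ is Freedman's closed topological manifold with intersection form $E_8\oplus\langle -1\rangle$; $Z$ has odd indefinite form and vanishing Kirby--Siebenmann, hence is smoothable. After straightening handles and applying Lemma~\ref{main}/Addendum~\ref{mainadd} to realize the genus filtration, one takes a Cantor-set family $\{X_t\}$ of nested Casson refinements of the 2-handles, $X_s\subset X_t$ precompactly for $s<t$, all containing a fixed finite subtower $T$ that carries the finitely many surfaces controlling the genus data (this is also how ``the same values of $G(\alpha)$'' is achieved---by compactness, not by a bespoke Legendrian realization of each $\alpha$). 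If $X_s$ and $X_t$ had diffeomorphic ends, one would splice the annular region $W$ between them periodically onto $Z-X_s$ to obtain a periodic-end smooth manifold homeomorphic to $Y-\{y\}$, whose end is \emph{negative definite} ($E_8\oplus\langle-1\rangle$); this contradicts Taubes. Thus the definiteness that makes the gauge theory run lives in the auxiliary summand $Y$, not in $X^*$ or in anything attached to a single end-sum. You would need an analogous device to make your approach work; as stated, the crux of your proof does not go through.
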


It is possible that every smooth, closed, simply connected 4--manifold is homeomorphic to some $X^*$ as given (with $S^4$ the trivial connected sum); this is the notorious $\frac{11}8$--Conjecture.

\begin{proof} Since the theorem is well-known when $X^*=S^4$, we may assume $b_2(X)\ne 0$, and orient $X$ so that it is not negative definite. Let $Y$ be Freedman's \cite{F}, \cite{FQ} closed, simply connected, topological 4--manifold with vanishing Kirby--Siebenmann invariant and intersection form $E_8\oplus\langle-1\rangle$ (negative definite but not diagonalizable). Then $Z=X^*\# Y$ also has vanishing Kirby--Siebenmann invariant but intersection form odd and indefinite, hence, diagonalizable. By Freedman's classification, $Z$ is homeomorphic to a connected sum of copies of $\pm\C P^2$, so it inherits a smooth structure. However $X^*$ was chosen, there is a finite 2--handlebody with interior $X$ (and no 1--handles). As in the proof of Theorem~\ref{filt}, we can assume each $A_i$ is the homology of a subhandlebody.  By Quinn's Handle Straightening Theorem (proof of Lemma~\ref{nest}), we can (nonambiently) isotope the embedding of $X$ in $Z$ so that it inherits a Casson-type smoothing. Applying Lemma~\ref{main} and Addendum~\ref{mainadd}, we can further refine the Casson handles so that the genus filtration is $\{A_i\}$ and $X$ has Stein--Casson type. We will construct uncountably many smoothings with the same genus filtration and genus--rank function as this one. Choose a finite collection of surfaces in $X$ including minimal genus representatives of each preassigned class $\alpha$ and a spanning set for each subgroup in the genus filtration as guaranteed by the definition of the latter. Let $T\subset\cl X\subset Z$ be obtained by replacing each Casson handle by its first few stages. By compactness, we can assume $T$ contains each surface in our finite collection, provided that enough stages of the Casson handles have been included in $T$. Then any refinement of $X$ containing $\int T$ must have the same genus filtration, genus--rank function, and values of each $G(\alpha)$ as before. As with $\{R_t\}$ in the last paragraph of the proof of Theorem~\ref{uncountable}, we can refine the Casson handles to construct a family $\{X_t\}$ of such Stein--Casson smoothings of $X$ indexed by the Cantor set, such that $X_s\subset X_t$ with compact closure whenever $s<t$. By a standard argument, eg \cite[Theorem~9.4.10]{GS}, no two of these are diffeomorphic: If $X_s$ and $X_t$ with $s<t$ even have diffeomorphic neighborhoods of their ends, let $W\subset X_t$ consist of the two diffeomorphic neighborhoods and the region lying in between. Then $W$ is homeomorphic to $S^3\times \R$, and we can smoothly glue copies of $W$ together near their ends to get other manifolds homeomorphic to $S^3\times \R$. Glue an infinite stack of copies of $W$ onto $Z-X_s$ to obtain a smooth manifold homeomorphic to $Y-\{y\}$ but with a periodic end. This contradicts a theorem of Taubes \cite{Taubes}.
\end{proof}

\section{Related Phenomena}\label{Related} %%%%%%%%%%%%%%%%%%%%%%%%%%%%%%%%%%%%

In this final section, we extend our results in several directions. Having shown in previous sections how to force the genus functions $G$ and $G_\infty$ to be large, we now investigate how small they can be. We then amplify our invariants by allowing our surfaces to be immersed rather than embedded. This allows us to distinguish orientations and larger families of smoothings. We obtain deeper insight into topologically embedded surfaces and the diffeomorphism classification problem for Casson handles.

For obvious lower bounds on our genus functions, we define topological genus functions $G^{\top}$ and $G_\infty^{\top}$ as in the smooth case, but using tame, topologically embedded surfaces. These bound $G$ and $G_\infty$ below, for every smoothing on a given topological manifold. We now prove that these lower bounds are sharp for open 4--manifolds.

\begin{thm}\label{GTOP} For every open 4--manifold $X$, every stable isotopy class admits a smoothing for which $X$ and all of its covers satisfy $G=G^{\top}$ and $G_\infty=G_\infty^{\top}$.
\end{thm}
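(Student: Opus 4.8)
The plan is to realize the topological lower bounds $G^{\top}$ and $G^{\top}_\infty$ simultaneously in all covers by exploiting the fact that, for an open $4$-manifold, any topologically embedded tame surface can (after stabilization to dimension $5$) be made ambiently isotopic to a smooth one, and that this process is governed by the canonical stable isotopy class. Concretely, I would first reduce to the case of a handlebody interior. Writing $X$ as the interior of a (possibly infinite) handlebody $H$ with Morse indices $\le 4$, I would use a handle-by-handle construction in the spirit of Lemma~\ref{nest}: on each handle, Quinn's Handle Straightening Theorem (\cite{Q}, as used in the proof of Lemma~\ref{nest}) lets me isotope a topologically standard picture into a smoothly controlled one. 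For index $\le 1$ handles this produces genuinely smooth handles; for index-$2$ handles it produces Casson handles; index-$3$ and $4$ handles are then attached by uniqueness of smoothings of $3$-manifolds. The key point is that we get to \emph{choose} which Casson handles (or, following Remark~\ref{GCH}, which more general towers interleaving embedded higher-genus surfaces) to insert, and we are free to refine them arbitrarily without disturbing anything already constructed.

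The heart of the argument is then to arrange that these inserted towers are ``transparent'' to the genus functions. Given a class $\alpha\in {\rm H}_2(X)$ (or of a cover, or with $\Z_2$-coefficients), a tame topological surface $F_{\top}$ of genus $G^{\top}(\alpha)$ representing $\alpha$ can, by Quinn's theorem applied to a neighborhood of $F_{\top}$ (cf.\ the proof of Theorem~\ref{Fiso}), be isotoped to lie inside a smoothly embedded tower whose core is \emph{almost smooth} — smooth except possibly at a discrete set of points — and then a small perturbation makes the whole surface smooth without increasing the genus. Thus for any single class, any single cover, and any single choice of coefficients, a suitable refinement of our towers realizes $G(\alpha)=G^{\top}(\alpha)$. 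The subtlety is that we must do this \emph{simultaneously} for all (countably many) classes in $X$ and in all of its covers. I would handle this by a diagonal/exhaustion argument: enumerate the countably many pairs (cover, homology class) that need attention on each compact piece, and at each stage of the handle-by-handle construction refine the finitely-many relevant towers further so that the finitely-many relevant surfaces become smooth, using Addendum~\ref{mainadd}-style independence to avoid undoing earlier refinements. Because refinement only shrinks the towers and a surface that has been straightened stays straightened under further refinement (the relevant finite subtower is already smooth), the limiting smooth structure works for every class in every cover. The analogous statement for $G_\infty$ follows by the same bookkeeping applied to the nested-$2$-system framework of Section~\ref{Ends}, straightening the sequences of surfaces representing classes in $\Hinf$ at successively deeper stages; covers are handled exactly as in Lemma~\ref{nest} by working in $\widetilde H_i$ throughout.

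Finally, the stable isotopy class is automatically the prescribed one, since all of the modifications (replacing handles by towers, refining towers) take place within a fixed stable isotopy class: a topological isotopy of embeddings induces, after crossing with $\R$, isotopic smoothings in dimension $5$, as recalled in the discussion preceding the proof of Lemma~\ref{nest}. I expect the main obstacle to be the simultaneity over all covers: a single refinement must be compatible with lifting to infinitely many covering spaces at once, and one must check that straightening a surface upstairs in some cover $\widetilde X$ does not force a choice that prevents straightening some other surface downstairs or in a different cover. The resolution is that refinement is a purely local operation on the towers and lifts coherently to every cover, so the countable diagonal process over (cover, class, coefficient system) — organized so that the $n$-th stage only touches towers in the $n$-th handle and deeper, where all earlier straightenings have already stabilized — produces a single well-defined limiting smooth structure with the desired property in every cover simultaneously.
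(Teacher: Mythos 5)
There is a genuine gap, and in fact the central mechanism you propose runs in the wrong direction.

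Your plan is to insert Casson handles (or more general towers) and then \emph{refine} them so that tame surfaces realizing $G^{\top}$ become smooth. But refinement shrinks Casson handles: a surface smooth in the refined handle is \emph{a fortiori} smooth in the larger one, so refinement can only keep or increase minimal genera, never decrease them. This is precisely how Lemma~\ref{main} and Theorem~\ref{Fiso} force genera to be \emph{large}. Moreover, the specific step ``\dots a tame surface $F_{\top}$ can be isotoped to lie inside a smoothly embedded tower whose core is almost smooth\dots and then a small perturbation makes the whole surface smooth without increasing the genus'' is false: the core of a nonstandard Casson handle is almost smooth but cannot be perturbed to a smooth surface of the same genus and class (otherwise the Casson handle would be standard). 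Almost-smoothness does not approximate smoothness in the sense needed.

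Two further problems with the bookkeeping. First, a single refinement must work for all tame representatives of a class, not just the one you enumerated, and you give no reason a later refinement that helps class $\alpha_{n}$ will not raise the minimal genus of an earlier $\alpha_{m}$; Addendum~\ref{mainadd} is built to \emph{protect lower bounds} on genus across refinements, which is again the opposite of what you need here. Second, your diagonal scheme is over ``the countably many pairs (cover, homology class),'' but $\pi_{1}(X)$ may be (say) free of rank two, in which case there are \emph{uncountably} many covers of $X$; a countable enumeration cannot be arranged.

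The paper's proof avoids all of this by using a different and more canonical object. Given any tame $F$ one builds a smoothing $\Sigma_{F}$ in the prescribed stable isotopy class in which $F$ is literally smooth (smooth the tubular neighborhood standardly, then smooth the complement rel boundary by Quinn; surjectivity of ${\rm H}^{3}(X,F;\Z_{2})\to {\rm H}^{3}(X;\Z_{2})$ recovers any stable class). Then one invokes the Freedman--Taylor universal smoothing: end-summing any smoothing $\Sigma$ along a countable family of rays dense in $\E(X)$ with copies of the universal exotic $\R^{4}$, $R_{U}$, produces a smoothing $\Sigma^{*}$ whose \emph{isotopy class} depends only on the \emph{stable} isotopy class of $\Sigma$. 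Applying this to $\Sigma_{F}$ shows an isotopic copy of $F$ is smooth in the one fixed $\Sigma^{*}$, simultaneously for all tame $F$, without any iterative construction. The cover statement reduces to checking that the Freedman--Taylor data (a dense family of rays) lifts to a dense family in $\E(\widetilde X)$, which the paper verifies directly for an arbitrary cover, sidestepping enumeration entirely. Without the uniqueness theorem of \cite{FT} there is no obvious replacement for this coherence, and that is the essential idea missing from your proposal.
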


Sometimes such smoothings come in uncountable families, eg for $\R^4$. However, the construction uniquely picks an isotopy class of smoothings from each stable isotopy class, and the set of these isotopy classes is preserved under homeomorphisms. It seems unlikely that the theorem holds for closed 4--manifolds. There are tame topological surfaces in $\C P^2$ (for example) that violate the adjunction inequality \cite{R}, \cite{LW}. However, it seems hard to rule out exotic (nonsymplectic) smooth structures in which these surfaces could be smooth. The theorem yields a smoothing on $\C P^2-\{x\}$ with smaller genus function than the standard smoothing.

\begin{proof} Given a surface $F$ (possibly disconnected or noncompact) properly and tamely embedded in $X$, we first construct a smoothing $\Sigma_F$ of $X$ in which $F$ is smooth. Start with the standard smoothing on a normal disk bundle $N$ of $F$. Since $X-\int N$ has no compact components, we can smooth it rel $\partial N$ \cite{Q}. This smoothing fits together with the one on $N$ to give $\Sigma_F$. We can arrange $\Sigma_F$ to represent any stable isotopy class. This is because the map ${\rm H}^3(X-\int N,\partial N;\Z_2)\cong {\rm H}^3(X,F;\Z_2)\to {\rm H}^3(X;\Z_2)$ is surjective, so we can switch to any other stable isotopy class by changing the stable isotopy class on $X-\int N$, which can be chosen arbitrarily.

The required smoothing now results from a general construction of Freedman and Taylor \cite{FT} using $R_U$, their universal exotic $\R^4$. Given a smoothing $\Sigma$ of $X$, choose a smooth, proper embedding $\gamma\co [0,\infty)\times S\to X$ with discrete (hence countable) index set $S$, so that the component rays of $\gamma$ determine a dense subset of the space of ends $\E(X)$. Let $\Sigma^*$ be the result of end-summing $\Sigma$ with a copy of $R_U$ along each of these rays. As is proved in \cite{FT}, the isotopy class of $\Sigma^*$ is independent of choice of $\gamma$, and only depends on $\Sigma$ through its stable isotopy class. Thus, for a fixed stable isotopy class, the corresponding smoothing $\Sigma^*$ will have the desired properties: Every tame surface $F\subset X$ is smooth in $\Sigma_F^*$ (provided that we choose $\gamma$ and its tubular neighborhood disjoint from $F$), so the given isotopy between smoothings also smooths $F$ in $\Sigma^*$. Thus, $\Sigma^*$ satisfies $G=G^{\top}$. For $\alpha\in \Hinf(X)$, we can realize $G_\infty^{\top}(\alpha)$ by a tame, proper embedding of an infinite union of compact surfaces into $X$ (in successive neighborhoods of infinity), so we have $G_\infty=G_\infty^{\top}$. The same reasoning applies to the domain of any covering $\pi$ of $X$, since $\pi^*(\Sigma^*)$ is constructed from $\pi^*\Sigma$ using $R_U$ as above \cite{MinGen}.
\end{proof}

We can similarly deal with tame, compact 3--manifolds $M\subset X$ in place of embedded surfaces. The main differences are that a smoothing for which $M$ is smooth may not exist if $M$ cuts out a compact subset of $X$, and smoothness of $M$ restricts the stable isotopy class of the ambient smooth structure.

\begin{remark} While it is tempting to conjecture that every open 4--manifold has uncountably many smoothings (Question~\ref{Q}), some caution is suggested by the universal $\R^4$. Could there be a 4--manifold with ends constructed so that every smoothing must have the form $\Sigma^*$ as above? There would then be a unique isotopy class of smoothings in each stable class.
\end{remark}

Minimal genus techniques can be supplemented by considering generically immersed surfaces. For a smooth, oriented 4--manifold $X$ and $\alpha\in{\rm H}_2(X)$, let $g_{im}(\alpha)$ denote the minimal genus of an immersed (or equivalently, continuously mapped) surface representing $\alpha$. For example, this vanishes when $X$ is simply connected, and equals $g(F)$ when $\alpha$ is a generator for an $\R^2$--bundle over an orientable surface $F$.

\begin{de} The {\em kinkiness} of a class $\alpha$ is the pair $\kappa(\alpha)=(\kappa_+(\alpha),\kappa_-(\alpha))$, where $\kappa_+(\alpha)$ (resp.\ $\kappa_-(\alpha)$) is the minimum number of positive (resp.\ negative) double points of smoothly, generically immersed surfaces of genus $g_{im}(\alpha)$ representing $\alpha$.
\end{de}

\noindent This was first introduced in \cite{kink}, in the context of immersed disks in manifolds with boundary. Note that there may not be a single surface representing both $\kappa_+(\alpha)$ and $\kappa_-(\alpha)$ simultaneously. For example, if $X$ is obtained from $B^4$ by attaching a 2--handle along a $\pm 1$--framed Figure--8 knot $K$, and $\alpha$ generates its homology, then $\kappa(\alpha)=(0,0)$, although there is no embedded sphere because the boundary has nontrivial Rohlin invariant. (The relevant immersed spheres are constructed by noticing that $K$ can be unknotted by a single crossing change, and the sign of the crossing can be chosen arbitrarily since $K$ is amphichiral.) This also illustrates that there is no obvious general relation between kinkiness and the corresponding genus function, although if one counts double points without sign, one obtains an upper bound on both $G-g_{im}$ and $\kappa_++\kappa_-$. Kinkiness has the advantage of supplying two integer invariants, which can sometimes be controlled independently. For example:

\begin{thm}\label{kink} For an oriented $X$ that is an $\R^2$--bundle over an orientable surface $F$, let $\alpha\in {\rm H}_2(X)$ be a generator. Then for each pair $k,l\in\Z^{\ge 0}$, there are uncountably many diffeomorphism types of smoothings of $X$ with $\kappa(\alpha)=(k,l)$, provided that $|e(X)|\le 2g(F)$, or more generally, that $2k\ge\chi(F)+ e(X)$ (or $k=0$) and $2l\ge\chi(F)- e(X)$ (or $l=0$). Thus, every $X$ realizes all $k$ or all $l$ in this manner.
\end{thm}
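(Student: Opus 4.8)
The plan is to realize $\kappa(\alpha)=(k,l)$ on a single Casson-type smoothing $\Sigma$ of $X$ and then invoke Theorem~\ref{uncountable} to spread this over an uncountable family. Orient $X$ and write $e=e(X)$, $\chi=\chi(F)$, $g=g(F)$, so $\chi=2-2g$. Since any representative of the generator $\alpha$ projects to $F$ with degree $\pm1$, one has $g_{im}(\alpha)=g$, so the task is to build a genus-$g$ immersed representative of $\alpha$ with exactly $k$ positive and $l$ negative double points and then to show nothing does better. Reversing the orientation of $X$ replaces $e$ by $-e$ and interchanges the signs of double points, so the statement is symmetric under $(e,k,l)\leftrightarrow(-e,l,k)$; I will treat $k,l\ge1$, the cases $k=0$ or $l=0$ being obtained by dropping the corresponding ingredient below, and $k=l=0$ by end-summing the standard smoothing (whose zero section persists).

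First I would start from the standard handlebody: one $0$-handle, $2g$ $1$-handles, and one $2$-handle attached along $\gamma=\partial(F\setminus D^2)$ with framing $e$, so that $\alpha$ is carried by the core together with the genus-$g$ Seifert surface for $\gamma$. As in the proof of Theorem~\ref{bundle}, $\gamma$ has Legendrian realizations on the Bennequin boundary $\mathrm{tb}+|r|=2g-1$; replacing the $2$-handle by a kinky handle with $k$ positive self-plumbings gives an associated $2$-handle of framing $e-2k$, and the hypothesis $2k\ge\chi+e$ is exactly what guarantees $e-2k\le 2g-2$, i.e.\ that this framing can be achieved as $\mathrm{tb}(\gamma)-1$. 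By Theorem~\ref{ann} one then caps off with a Casson handle $CH_{\mathfrak o}$, with one positive self-plumbing at each later stage, so that the result is a Stein surface $S_{\mathfrak o}$ in the given orientation with $|\langle c_1(S_{\mathfrak o}),\alpha\rangle|=2k-\chi-e$. Symmetrically, viewing $\overline X$ as the disk bundle of Euler number $-e$ and using $2l\ge\chi-e$, I would build on the same framed circle a Casson handle $CH_{\overline{\mathfrak o}}$ whose first stage has $l$ self-plumbings positive with respect to $\overline{\mathfrak o}$ (hence negative with respect to $\mathfrak o$), giving a Stein surface $S_{\overline{\mathfrak o}}$ in the reversed orientation with $|\langle c_1(S_{\overline{\mathfrak o}}),\alpha\rangle|=2l-\chi+e$. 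Finally let $CH$ be the common refinement of $CH_{\mathfrak o}$ and $CH_{\overline{\mathfrak o}}$ obtained by identifying the base points of their trees, so that the first stage of $CH$ carries exactly $k$ positive and $l$ negative self-plumbings, and let $\Sigma$ be the induced smoothing of $X$.

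Then $\Sigma$ should work. The immersed core disk of the first stage of $CH$, together with the (embedded) genus-$g$ Seifert surface for $\gamma$, is an immersed genus-$g$ representative $F'$ of $\alpha$ whose only singularities are those $k$ positive and $l$ negative double points, so $\kappa_+(\alpha)\le k$ and $\kappa_-(\alpha)\le l$. For the reverse bounds, note that $CH$ refines $CH_{\mathfrak o}$, so there is an orientation-preserving smooth embedding $\Sigma\hookrightarrow S_{\mathfrak o}$; Theorem~\ref{adj} applied to any genus-$g$ immersed representative of $\alpha$ with $p$ positive double points (it is not a nullhomotopic sphere since $\alpha\ne0$) gives $2g-2+2p\ge\alpha\cdot\alpha+|\langle c_1(S_{\mathfrak o}),\alpha\rangle|=e+(2k-\chi-e)=2g-2+2k$, so $p\ge k$. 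Likewise $CH$ refines $CH_{\overline{\mathfrak o}}$, yielding an orientation-preserving embedding $\overline\Sigma\hookrightarrow S_{\overline{\mathfrak o}}$ under which the negative double points of a surface become positive, so the same inequality there forces $\kappa_-(\alpha)\ge l$; hence $\kappa(\alpha)=(k,l)$. I expect this orientation-reversal step to be the crux: the version of the adjunction inequality in Theorem~\ref{adj} constrains only the positive double points, so controlling $\kappa_-$ forces one to produce a single smoothing that simultaneously refines Stein structures of \emph{both} orientations with prescribed Chern pairings, and the two-sided hypothesis $2k\ge\chi+e$, $2l\ge\chi-e$ is precisely what makes both refinements exist.

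To pass from $\Sigma$ to uncountably many smoothings, recall that every orientable $\R^2$-bundle over an orientable surface embeds smoothly in $\#n\,\overline{\CP^2}$ for some finite $n$, and a Casson handle embeds standardly in the $2$-handle it replaces, so $\Sigma$ itself embeds smoothly in $\#n\,\overline{\CP^2}$. Theorem~\ref{uncountable}, applied with $\pi=\mathrm{id}_X$, then produces uncountably many diffeomorphism types $\Sigma_t$, each an end-sum of $\Sigma$ with exotic $\R^4$'s; the orientation-preserving smooth embeddings $\Sigma\hookrightarrow\Sigma_t$ and $\Sigma_t\hookrightarrow\Sigma$ that are topologically isotopic to the identity (built in that proof) carry immersed surfaces to immersed surfaces with the same genus and double-point data, so $\kappa(\alpha)=(k,l)$ for every $\Sigma_t$. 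Lastly, for any $X$ one has $e(X)\le 2g(F)$ or $e(X)\ge-2g(F)$; in the first case the hypotheses hold for all $k$ (with $l=0$) and in the second for all $l$ (with $k=0$), which gives the final sentence.
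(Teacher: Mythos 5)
Your proposal is correct and follows essentially the same line as the paper's proof: build a Casson-type smoothing whose Casson handle is a common refinement of a ``positive'' Casson handle (yielding a Stein structure $U^+_{g,e(X),k}$ for the given orientation) and a ``negative'' one (Stein for the reversed orientation), read off the upper bounds from the immersed first-stage core, get the lower bounds from the two adjunction inequalities, and then invoke Theorem~\ref{uncountable} via the embedding into $\#n\overline{\CP^2}$, noting that end-summing with a small exotic $\R^4$ preserves $\kappa$. The only cosmetic difference is that you re-derive the Stein framing condition through the Bennequin bound rather than citing the surfaces $U^{+}_{g,n,k}$ already built in the proof of Theorem~\ref{bundle}, but the manifolds and the argument are the same.
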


\noindent This provides information that cannot be obtained from the minimal genus function. For example, those smoothings with $\kappa_+(\alpha)\ne\kappa_-(\alpha)$ admit no orientation-reversing self-diffeomorphism, even if $e(X)=0$. More complicated 2--handlebody interiors can be similarly analyzed by applying the method below to Legendrian Kirby diagrams.

\begin{proof} Let $CH_k$ be the Casson handle with all double points positive, having $k$ double points at the first stage and just one in each subsequent kinky handle. Write $X$ as the interior of a handlebody with a unique 2--handle, and let $W_{k,l}$ be its Casson-type smoothing whose Casson handle is the simplest common refinement of $CH_k$ and $CH_l$ after we reverse orientation on the latter. Then $\alpha$ is represented by an obvious immersed surface of the correct genus, with $k$ positive and $l$ negative double points, providing the required upper bounds for $\kappa(\alpha)$. To show $\kappa_+(\alpha)\ge k$, note that $W_{k,l}$ embeds in $W_{k,0}$, which for positive $k$ as given above is the Stein surface $U^+_{g(F),e(X),k}$ from the proof of Theorem~\ref{bundle}. The desired inequality follows from the adjunction inequality for immersed surfaces, Theorem~\ref{adj}. The lower bound on $\kappa_-$ follows similarly with reversed orientation. Since each $W_{k,l}$ smoothly embeds in $X$, and hence in a negative definite 4--manifold (for some orientation on  $W_{k,l}$), we obtain uncountably many diffeomorphism types as in Theorem~\ref{uncountable}. (Summing with a small exotic $\R^4$ preserves $\kappa$.)
\end{proof}

As with minimal genus, we can define kinkiness on the 2--homology of any subset $Z$ of a smooth, oriented 4--manifold, using immersions of a surface that has minimal genus for maps into $Z$ representing the given class. Its values are ordered pairs of elements of $\Z^{\ge0}\cup\{\infty\}$ (cf.\ Definition~\ref{mgsubset}). This is an invariant of myopic equivalences preserving orientation and the homology class. The method of Theorem~\ref{Fiso}, applied to the manifolds $W_{k,l}\subset X$ from the above proof, shows:

\begin{thm}\label{kinkysphere} Every orientable surface smoothly embedded in an oriented 4--manifold is topologically ambiently isotopic to almost-smooth surfaces realizing all pairs $\kappa_\pm(\alpha)\in\Z^{\ge0}\cup\{\infty\}$ satisfying the restrictions of Theorem~\ref{kink}. For a tame, topological embedding, all pairs with $\kappa_\pm(\alpha)\in\{m,\dots,\infty\}$ for a sufficiently large $m$ are similarly realized. \qed
\end{thm}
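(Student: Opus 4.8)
The plan is to reuse the construction in the proof of Theorem~\ref{Fiso} essentially verbatim, but with the Casson-type models $W_{k,l}$ from the proof of Theorem~\ref{kink} substituted for the Stein bundles $U^+_{g,n,k}$, while keeping track of the signs of double points throughout. Given the smoothly embedded orientable surface $F$ in the oriented 4-manifold $X$ (treat components one at a time, so assume $F$ connected), I would fix a tubular $D^2$-bundle neighborhood $H_0\subset X$, presented as a handlebody with one $0$-handle, $2g(F)$ $1$-handles, and a single $2$-handle whose framing records $e(X)=F\cdot F$. Next, exactly as in Theorem~\ref{Fiso}, smooth the $0$- and $1$-handles by Quinn's Handle Straightening Theorem and build a nested $2$-system canonically embedded in $H_0$ by choosing nested embeddings of the $2$-handle inside a smoothly embedded Casson handle, applying the ``$h'\cup A$'' refinement trick of that proof so that the successive topological $2$-handles $h_i'$ meet in a single point $x$, with each connecting annulus standard up to smooth isotopy in the previous $h'$. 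As there, the common intersection $C=\bigcap_i H_i$ is then an almost-smooth surface (smooth except at $x$), topologically ambiently isotopic to $F$ by consecutively isotoping the handles $h_i'$ and annuli onto a standard model.

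The only change is the choice of model at each stage. For a target pair I would pick nondecreasing sequences $\{k_i\},\{l_i\}$ in $\Z^{\ge0}$ --- constant when the target coordinate is finite, diverging when it is $\infty$ --- with every term admissible in the sense of Theorem~\ref{kink} (each $k_i$ equal to $0$ or satisfying $2k_i\ge\chi(F)+e(X)$, and likewise for $l_i$), and arrange the $i$-th completed $2$-handle so that $H_i$ is $W_{k_i,l_i}$, the Casson-type smoothing of the disk bundle of Euler number $e(X)$ whose Casson handle is the simplest common refinement of $CH_{k_i}$ and the orientation-reversed $CH_{l_i}$. Because $CH_{k+1}$ refines $CH_k$, these models nest, $W_{k_i,l_i}\emb W_{k_{i+1},l_{i+1}}$, so the refinement flexibility of Casson handles permits the $H_i$ to be chosen this way --- just as the $U^+$'s were nested in Theorem~\ref{Fiso} --- and the $H_i$ then form a neighborhood basis of $C$.

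It remains to evaluate the kinkiness of the generator $\alpha$ of ${\rm H}_2(H_i)$ in each $H_i\approx W_{k_i,l_i}$. Since $g_{im}(\alpha)=g(F)$ in every such neighborhood and the double-point counts behave monotonically under the inclusions $H_{i+1}\subset H_i$, the pair $\kappa_\pm(\alpha)$ for the subset $C$ is the limit over $i$ of $\kappa_\pm(\alpha)$ in $H_i$, so it suffices to show $\kappa_\pm(\alpha)=(k_i,l_i)$ in $W_{k_i,l_i}$. The upper bound is immediate: resolving the self-plumbings of the zero-section of $W_{k_i,l_i}$ produces a genus-$g(F)$ immersed representative of $\alpha$ with exactly $k_i$ positive and $l_i$ negative double points. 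For the lower bound, recall from the proof of Theorem~\ref{kink} that $W_{k_i,l_i}$ embeds, orientation-preservingly, in $W_{k_i,0}=U^+_{g(F),e(X),k_i}$, a Stein surface (which exists precisely because $2k_i\ge\chi(F)+e(X)$) in which the rotation numbers have been arranged so that $F'\cdot F'+|\langle c_1,F'\rangle|=2g(F)-2+2k_i$ for $F'$ representing $\alpha$. Since $\alpha$ is homologically essential, Theorem~\ref{adj} then forces a genus-$g(F)$ immersed representative with $k'$ positive double points to satisfy $2g(F)+2k'-2\ge 2g(F)-2+2k_i$, i.e.\ $k'\ge k_i$, so $\kappa_+(\alpha)\ge k_i$ whenever $k_i>0$. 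Reversing the ambient orientation converts $W_{k_i,l_i}$ into a Casson-type smoothing of the disk bundle of Euler number $-e(X)$ with the roles of positive and negative double points exchanged, and this embeds in $U^+_{g(F),-e(X),l_i}$, which exists because $2l_i\ge\chi(F)-e(X)$; the same estimate yields $\kappa_-(\alpha)\ge l_i$ whenever $l_i>0$ (and when $k_i=0$ or $l_i=0$ no lower bound is needed). Passing to the limit realizes the prescribed pair, with $\infty$ in a coordinate exactly when the chosen sequence diverged. For a merely tame topological embedding of $F$ one cannot prescribe the smooth structure on $H_0$, so, as in Theorem~\ref{Fiso}, the Handle Straightening step only supplies a Casson handle that one is free to refine further: the counts $k_i,l_i$ can still be increased at will but not driven below whatever the induced smoothing already forces, which gives exactly the stated restriction $\kappa_\pm(\alpha)\in\{m,\dots,\infty\}$ for a sufficiently large $m$.

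I expect the main obstacle to be minor, since nearly all the real work --- the topological isotopy producing an almost-smooth $C$ isotopic to $F$, and the construction and nesting of the Casson-handle models --- is already done in Theorems~\ref{Fiso}, \ref{bundle}, and~\ref{kink}. The one genuinely new point is the two-sided kinkiness estimate, whose delicate part is verifying that the orientation-reversed model lands in a Stein bundle of the right Euler number so that the immersed adjunction inequality supplies the lower bound on $\kappa_-$, and that the arithmetic reproduces exactly the admissibility conditions of Theorem~\ref{kink} (with the $k=0$ and $l=0$ cases carrying no lower bound).
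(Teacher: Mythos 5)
Your proposal is correct and, up to the level of detail, is exactly what the paper has in mind: the paper's ``proof'' of this theorem is literally the one-line remark that ``the method of Theorem~\ref{Fiso}, applied to the manifolds $W_{k,l}\subset X$ from the above proof, shows:'' the statement. You have correctly fleshed out that sentence, and the genuinely new verification (the two-sided kinkiness estimate via embeddings of $W_{k,l}$ into $U^+_{g(F),\pm e(X),k_i\text{ or }l_i}$ for the two orientations) is carried out accurately, including the arithmetic recovering the admissibility conditions of Theorem~\ref{kink}. One small wording slip: ``resolving the self-plumbings of the zero-section'' would read better as simply ``the generic immersion given by the zero-section with its $k_i$ positive and $l_i$ negative double points''; you do not want to smooth the double points, since the immersed surface must retain them to have genus $g(F)$.
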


\noindent The last sentence is stronger than its analog in Theorem~\ref{Fiso} since negative double points raise the obvious upper bound for $G$ but not for $\kappa_+$. We can now complete our discussion of Corollary~\ref{infiso}:

\begin{cor}\label{infisoF} Let $F$ be a tame, orientable, compact surface in a smooth, orientable 4--manifold $X$. Then for every $n\in\Z^+$ there are $n$ almost-smooth surfaces ambiently isotopic to $F$ such that each has a neighborhood in which no neighborhood of any other embeds smoothly, preserving $[F]$ up to sign.
\end{cor}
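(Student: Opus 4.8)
The plan is to use kinkiness as an obstruction to smooth embedding, exactly as the genus-rank function is used in the last sentence of the proof of Corollary~\ref{infiso}. Assume $F$ is connected (the disconnected case is analogous, working with the class $[F]$), and fix an orientation on $X$. Since $F$ is orientable, every tubular neighborhood of $F$, or of any surface topologically ambiently isotopic to it, is an oriented $\R^2$-bundle over $F$; hence $[F]$ generates its second homology and $g_{im}([F])=g(F)$ is the same in every such neighborhood. The required monotonicity is: if $\phi\co N'\emb N$ is a smooth embedding of such neighborhoods carrying $[F]$ to $\pm[F]$, then --- because $\phi$ is a diffeomorphism onto its image, $g_{im}$ is fixed, and $\kappa_\pm$ is unaffected by negating the homology class --- one gets $\kappa_+^{N}([F])\le\kappa_+^{N'}([F])$ and $\kappa_-^{N}([F])\le\kappa_-^{N'}([F])$ if $\phi$ preserves the orientation of $X$, and $\kappa_+^{N}([F])\le\kappa_-^{N'}([F])$, $\kappa_-^{N}([F])\le\kappa_+^{N'}([F])$ if it reverses it (since $\phi$ then reverses the signs of double points).

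I would then fix an integer $m$ large enough that pairs with both coordinates $\ge m$ meet the numerical restrictions of Theorem~\ref{kink} and fall in the range realized by Theorem~\ref{kinkysphere} for tame embeddings, with also $m>n$, and set $(a_i,b_i)=(m+i,\,10m-i)$ for $i=1,\dots,n$. These $n$ pairs are pairwise incomparable in the product order, and $b_i>a_j$ for all $i,j$. By Theorem~\ref{kinkysphere} (its tame version), $F$ is topologically ambiently isotopic to almost-smooth surfaces $F_1,\dots,F_n$ with $\kappa_\pm([F_i])=(a_i,b_i)$; moreover the construction behind that theorem (the nested-tower method of Theorem~\ref{Fiso}) exhibits, for each $i$, a neighborhood $N_i$ of $F_i$ whose subneighborhoods form a neighborhood basis of $F_i$ and are each diffeomorphic to the model $\R^2$-bundle-with-Casson-handle realizing $(a_i,b_i)$. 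In particular every neighborhood of $F_i$ contained in $N_i$ has kinkiness of $[F]$ equal to $(a_i,b_i)$.

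To conclude, suppose for $i\ne j$ that some neighborhood $N_j'$ of $F_j$ embedded smoothly into $N_i$ carrying $[F]$ to $\pm[F]$. Replacing $N_j'$ by $N_j'\cap N_j$ we may assume $\kappa_\pm^{N_j'}([F])=(a_j,b_j)$, while $\kappa_\pm^{N_i}([F])=(a_i,b_i)$. If the embedding preserves the orientation of $X$, the monotonicity gives $a_i\le a_j$ and $b_i\le b_j$, contradicting incomparability; if it reverses the orientation, it gives $b_i\le a_j$, which is false. So no such embedding exists, proving Corollary~\ref{infisoF}; the hypothesis ``preserving $[F]$ up to sign'' is used precisely to invoke these inequalities. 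All of the substance is upstream: the adjunction inequality for immersed surfaces in Stein surfaces (Theorem~\ref{adj}), which through Theorem~\ref{kink} supplies the lower bounds $\kappa_\pm([F_i])\ge(a_i,b_i)$. The only point here needing care --- and the closest thing to an obstacle --- is the robustness just used, that the surfaces produced by Theorem~\ref{kinkysphere} have \emph{every} sufficiently small neighborhood carrying the prescribed kinkiness, so that $\kappa$ genuinely obstructs the embedding and is not an artifact of a single chosen neighborhood.
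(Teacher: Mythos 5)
Your proof is correct and is essentially the same as the paper's: the paper also chooses the surfaces so that the kinkiness pairs are pairwise incomparable in the product order with all values of one sign exceeding all of the other sign (it takes $\kappa_-(F_1)<\cdots<\kappa_-(F_n)\le\kappa_+(F_n)<\cdots<\kappa_+(F_1)$, a reflection of your $(m+i,10m-i)$), and then appeals to the same monotonicity of $\kappa$ under smooth embeddings, including the orientation-reversing case which swaps $\kappa_+$ and $\kappa_-$. The one point you rightly flag as needing care---that the surfaces from Theorem~\ref{kinkysphere} have \emph{every} sufficiently small neighborhood realizing the prescribed kinkiness, so that shrinking $N_j'$ to $N_j'\cap N_j$ is licit---follows because $\kappa_\pm$ of the subset is defined as a supremum over its neighborhoods and is nondecreasing under shrinking, so a finite supremum must be achieved on all sufficiently small neighborhoods.
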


\begin{proof} Choose surfaces $F_r$ such that $\kappa_-(F_1)<\cdots<\kappa_-(F_n)\le\kappa_+(F_n)<\cdots<\kappa_+(F_1)$. This works even for embeddings reversing orientation on $X$ or $[F]$.
\end{proof}

The first use of kinkiness \cite{kink} was rel boundary, for a circle in the boundary of a 4--manifold, such as a classical knot in $\partial B^4$ or the attaching circle of a Casson handle, and for flat topological disks (the analog of Theorem~\ref{kinkysphere}). The above techniques generalize easily to this context (and to more general rel boundary settings). We immediately obtain the most general current results on classifying Casson handles up to orientation-preserving diffeomorphism: Given a signed tree for a Casson handle $CH$, remove all negative edges, take the connected component containing the base point, and prune away all finite branches. Then the resulting valence of the base point is a lower bound on $\kappa_+(CH)$ (and on $G(CH)$), defined relative to the attaching circle, and similarly with reversed signs. For example, the Casson handles of the smoothings $W_{k.l}$ in the proof of Theorem~\ref{kink} are all distinguished (up to orientation-preserving diffeomorphism) by their kinkiness $(k,l)$. We obtain a stronger result, essentially by merging \cite{kink} with \cite[Proposition~4.1]{ch} (which produces uncountably many diffeomorphism types of Casson handles):

\begin{thm}\label{CH} For each $(k,l)\in \Z^{\ge0}\times\Z^{\ge0}-\{(0,0)\}$, there are uncountably many diffeomorphism types of Casson handles with kinkiness $(k,l)$.
\end{thm}

\begin{proof} Reversing orientation if necessary, we may assume $k>0$. For the $\R^2$--bundle $X$ over $S^2$ with $e(X)=1$, an embedding $X\to Z$ as in the proof of Theorem~\ref{large} is explicitly constructed in \cite{kink} so that the inherited smoothing of $X$ has Casson type with a unique double point (positive) in the first stage of its Casson handle. Refining, we can assume the smoothing is a refinement of $W_{k,l}$ with the same first stage as the latter. The proof of Theorem~\ref{large} (ignoring the Stein condition) gives uncountably many Casson-type diffeomorphism types on $X$ with kinkiness $(k,l)$, and hence uncountably many diffeomorphism types of Casson handles with this kinkiness. (Note that the map from oriented to unoriented diffeomorphism types is at most 2:1.)
\end{proof}

One can also define the kinkiness at infinity on classes in $\Hinf(X)$ with $g_{im}$ finite. However, this seems harder to control. While it is routine to adapt the methods of this paper to construct lower bounds, upper bounds seem more difficult since the naturally arising immersed surfaces intersect $C$. The author knows no examples of smoothings with a finite value of $\kappa_\pm$ at infinity that is strictly larger than the corresponding topological value.
%%%%%%%%%%%%%%%%%%%%%%%%%%%%%%%%%%%%%%


\begin{thebibliography}{MM}

\bibitem[AY]{AY}
S. Akbulut and K. Yasui,
{\em Cork twisting exotic Stein 4--manifolds},
J. Diff. Geom.
{\bf 93} (2013), 1--36.

\bibitem[B]{B}
J. Bennett,
{\em Exotic smoothings via large $\R^4$'s in Stein surfaces},
arXiv:1410.6454 .

\bibitem[BE]{BE}
\v Z. Bi\v zaca and J. Etnyre,
{\em Smooth structures on collarable ends of 4--manifolds},
Topology
{\bf 37} (1998), 461--467.

\bibitem[BG]{BG}
\v Z. Bi\v zaca and R. Gompf,
{\em Elliptic surfaces and some simple exotic $\R^4$'s},
J. Diff. Geom.
{\bf 43} (1996), 458--504.

\bibitem[CG]{CH}
J. Calcut and R. Gompf,
{\em On uniqueness of end-sums and 1--handles at infinity},
in preparation.

\bibitem[C]{C}
A. Casson,
{\em Three lectures on new infinite constructions in 4--dimensional manifolds},
(notes prepared by L.~Guillou),
A la Recherche de la Topologie Perdue,
Progress in Mathematics
vol.62,
Birkh\"auser, 1986, pp.~201--244.

\bibitem[CE]{CE}
K. Cieliebak and Y. Eliashberg,
{\em From Stein to Weinstein and Back---Symplectic Geometry of Affine Complex Manifolds},
Colloquium Publications {\bf 59}, Amer. Math. Soc., Providence, 2012.

\bibitem[DF]{DF}
S. DeMichelis and M. Freedman,
{\em Uncountably many exotic $\R^4$'s in standard 4--space},
J. Diff. Geom. {\bf 35} (1992), 219--254.

\bibitem[D1]{D1}
S. Donaldson,
{\em An application of gauge theory to four dimensional topology},
J. Diff. Geom.
{\bf 18} (1983), 279--315.

\bibitem[D2]{D2}
S. Donaldson,
{\em Polynomial invariants for smooth four-manifolds},
Topology
{\bf 29} (1990), 257--315.

\bibitem[E]{E}
Y. Eliashberg,
{\em Topological characterization of Stein manifolds of dimension $>2$},
Int. J. of Math. {\bf 1} (1990),  29--46.

\bibitem[Fa]{Fa}
F. Fang,
{\em Smooth structures on $\Sigma\times\R$},
Topology Appl. {\bf 99} (1999), 123--131.

\bibitem[Fo]{Fo}
F. Forstneri\v c,
{\em Stein Manifolds and Holomorphic Mappings: The Homotopy Principle in Complex Analysis},
Ergebnisse der Mathematik und ihrer Grenzgebiete, {\bf 56},
Springer,
Berlin, 2011.

\bibitem[F]{F}
M. Freedman,
{\em The topology of four-dimensional manifolds},
J. Diff. Geom. {\bf 17} (1982),  357--453.

\bibitem[FQ]{FQ}
M. Freedman and F. Quinn,
{\em Topology of 4--Manifolds},
Princeton Math. Ser., {\bf 39},
Princeton Univ. Press,
Princeton, NJ, 1990.

\bibitem[FT]{FT}
M. Freedman and L. Taylor,
{\em A universal smoothing of four-space},
J. Diff. Geom. {\bf 24} (1986),  69--78.

\bibitem[Fr]{Fr}
H. Freudenthal,
{\em \"Uber die Enden Topologischer R\"aume und Gruppen},
Math. Zeit. {\bf 33} (1931),  692--713.

\bibitem[Ga]{Ga}
S. Ganzell,
{\em Complexity of exotic $\R^4$'s},
Dissertation, Rice University, 2000.

\bibitem[G1]{kink}
R. Gompf,
{\em Infinite families of Casson handles and topological disks},
Topology {\bf 23} (1984),  395--400.

\bibitem[G2]{infR4}
R. Gompf,
{\em An infinite set of exotic $\R^4$'s},
J. Diff. Geom. {\bf 21} (1985),  283--300.

\bibitem[G3]{ch}
R. Gompf,
{\em Periodic ends and knot concordance},
Topology Appl. {\bf 32} (1989),  141--148.

\bibitem[G4]{menag}
R. Gompf,
{\em An exotic menagerie},
J. Diff. Geom. {\bf 37} (1993),  199--223.

\bibitem[G5]{Ann}
R. Gompf,
{\em Handlebody construction of Stein surfaces},
Ann. Math. {\bf148} (1998), 619--693.

\bibitem[G6]{JSG}
R. Gompf,
{\em Stein surfaces as open subsets of $\C^2$},
J. Symplectic Geom. {\bf3} (2005), 565--587.

\bibitem[G7]{yfest}
R. Gompf,
{\em Constructing Stein surfaces after Eliashberg},
New Perspectives and Challenges in Symplectic Field Theory,
CRM Proceedings and Lecture notes
vol. 49,
Amer. Math. Soc. 2009, 229--249.

\bibitem[G8]{steindiff}
R. Gompf,
{\em Smooth embeddings with Stein surface images},
J. of Topology {\bf 6} (2013), 915--944.

\bibitem[G9]{MinGen}
R. Gompf,
{\em Minimal genera of open 4--manifolds},
arXiv:1309.0466v2 .

\bibitem[G10]{steintop}
R. Gompf,
{\em Creating Stein surfaces by topological isotopy},
in preparation.

\bibitem[GS]{GS}
R. Gompf and A. Stipsicz,
{\em 4--Manifolds and Kirby Calculus},
Grad. Studies in Math. {\bf 20},
Amer. Math. Soc., Providence, 1999.

\bibitem[HR]{HR}
B. Hughes and A. Ranicki,
{\em Ends of Complexes},
Cambridge Tracts in Math. {\bf 123},
Cambridge Univ. Press, Cambridge, 1996.

\bibitem[KS]{KS}
R. Kirby and L. Siebenmann,
{\em Foundational Essays on Triangulations and Smoothings of Topological Manifolds},
Ann. Math. Studies 88, Princeton University Press, 1977.

\bibitem[L]{L}
E. Laitinen,
{\em End homology and duality},
Forum Math. {\bf8} (1996), 121--133.
%http://www.maths.ed.ac.uk/~aar/papers/laitinen.pdf

\bibitem[LW]{LW}
R. Lee and D. Wilczynski,
{\em Representing homology classes by locally flat surfaces of minimal genus},
Amer. J. Math. {\bf 19} (1997), 1119--1137.

\bibitem[N]{N}
S. Nemirovski,
{\em Adjunction inequality and coverings of Stein surfaces},
Turkish J. Math. {\bf 27} (2003), 161--172.

\bibitem[Q]{Q}
F. Quinn,
{\em Ends of maps. III: dimensions 4 and 5},
J. Diff. Geom. {\bf 17} (1982),  503--521.

\bibitem[R]{R}
L. Rudolph,
{\em Some topologically locally flat surfaces in the complex projective plane},
Comment. Math. Helvetici {\bf 59} (1984),  592--599.

\bibitem[T]{Taubes}
C. Taubes,
{\em Gauge theory on asymptotically periodic 4--manifolds},
J. Diff. Geom. {\bf 25} (1987),  363--430.

\bibitem[Ta]{T}
L. Taylor,
{\em An invariant of smooth 4--manifolds},
Geom. and Top. {\bf1} (1997), 71--89.

\bibitem[Y]{Y}
K. Yasui,
{\em Nuclei and exotic 4--manifolds},
arXiv:1111.0620 .


\end{thebibliography}
\end{document}